\theoremstyle{plain}
\newtheorem{thm}{Theorem}[section]
\newtheorem{lem}[thm]{Lemma}
\newtheorem{pro}[thm]{Proposition}
\newtheorem{co}[thm]{Corollary}
\theoremstyle{definition}
\newtheorem{defn}[thm]{Definition}
\theoremstyle{remark}
\newtheorem{rem}[thm]{Remark}
\newcommand{\Gtwo}{\ifmmode{{\rm G}_2}\else{${\rm G}_2$}\fi}
\date{\today}
\begin{document}

\title[]
 {The decomposition of almost paracontact metric manifolds in eleven classes revisited}

\author[S. Zamkovoy]{Simeon Zamkovoy}

\address{
University of Sofia "St. Kl. Ohridski"\\
Faculty of Mathematics and Informatics\\
Blvd. James Bourchier 5\\
1164 Sofia, Bulgaria}
\email{zamkovoy@fmi.uni-sofia.bg}

\author[G. Nakova]{Galia Nakova}

\address{
University of Veliko Tarnovo "St. Cyril and St. Methodius" \\ Faculty of Mathematics and Informatics\\   Department of Algebra and Geometry
\\ 2 Teodosii Tarnovski Str. \\ Veliko Tarnovo 5003\\ Bulgaria}
\email{gnakova@gmail.com}

\subjclass{}

\keywords{Almost paracontact metric manifolds, 3-dimensional almost paracontact manifolds, $\alpha$-para-Sasakian manifolds, $\alpha$-para-Kenmotsu manifolds}


\begin{abstract}
This paper is a continuation of our previous work, where eleven basic classes of almost paracontact metric manifolds with respect to the covariant derivative of the structure tensor field were obtained. First we decompose one of the eleven classes into two classes  and  the basic classes of the considered manifolds become twelve. Also, we determine  the classes of $\alpha$-para-Sasakian,  $\alpha$-para-Kenmotsu, normal, paracontact metric, para-Sasakian, K-paracontact and quasi-para-Sasakian manifolds. Moreover, we study 3-dimensional almost paracontact metric manifolds and show that they belong to four basic classes from the considered classification. We define an almost paracontact metric structure on any 3-dimensional Lie group and give concrete examples of Lie groups belonging to each of the four basic classes, characterized by commutators on the corresponding Lie algebras.
\end{abstract}

\newcommand{\g}{\mathfrak{g}}
\newcommand{\s}{\mathfrak{S}}
\newcommand{\D}{\mathcal{D}}
\newcommand{\F}{\mathcal{F}}
\newcommand{\R}{\mathbb{R}}
\newcommand{\K}{\mathbb{K}}
\newcommand{\U}{\mathbb{U}}
\newcommand{\diag}{\mathrm{diag}}
\newcommand{\End}{\mathrm{End}}
\newcommand{\im}{\mathrm{Im}}
\newcommand{\id}{\mathrm{id}}
\newcommand{\Hom}{\mathrm{Hom}}

\newcommand{\Rad}{\mathrm{Rad}}
\newcommand{\rank}{\mathrm{rank}}
\newcommand{\const}{\mathrm{const}}
\newcommand{\tr}{{\rm tr}}
\newcommand{\ltr}{\mathrm{ltr}}
\newcommand{\codim}{\mathrm{codim}}
\newcommand{\Ker}{\mathrm{Ker}}

\newcommand{\thmref}[1]{Theorem~\ref{#1}}
\newcommand{\propref}[1]{Proposition~\ref{#1}}
\newcommand{\corref}[1]{Corollary~\ref{#1}}
\newcommand{\secref}[1]{\S\ref{#1}}
\newcommand{\lemref}[1]{Lemma~\ref{#1}}
\newcommand{\dfnref}[1]{Definition~\ref{#1}}


\newcommand{\ee}{\end{equation}}
\newcommand{\be}[1]{\begin{equation}\label{#1}}

\maketitle

\section{Introduction}\label{sec-1}
Different manifolds with additional tensor structures have been classified with respect to the structure $(0,3)$ tensors, generated by the covariant derivative of the
fundamental tensor of type $(1,1)$. For example, such classifications are: the Gray-Hervella classification of almost Hermitian manifolds given in \cite{GH}, the Naveira classification of Riemannian almost product manifolds - in \cite{N}, the Ganchev-Borisov classification of almost complex manifolds with Norden metric - in
\cite{GB}, the Alexiev-Ganchev classification of almost contact metric manifolds - in \cite{AG}, the Ganchev-Mihova-Gribachev classification of almost contact B-metric manifolds - in \cite{GMG} and etc.
\par
In \cite{NZ} we decomposed the vector space of the structure $(0,3)$ tensors on almost paracontact metric manifolds (called almost paracontact manifolds with semi-Riemannian metric of $(n+1,n)$) in eleven subspaces which are orthogonal and invariant under the action of the structure group of the considered manifolds. In this paper we show that one of the eleven subspaces could be decomposed in two orthogonal and invariant subspaces and give their characteristic conditions. Then we find the
dimensions of the twelve subspaces and the projections of the structure tensor in the corresponding  basic classes of almost paracontact metric manifolds. Also, we obtain the classes of the following types of almost paracontact metric manifolds: $\alpha$-para-Sasakian,  $\alpha$-para-Kenmotsu, normal, paracontact metric, para-Sasakian, K-paracontact and quasi-para-Sasakian.
\par
We pay special attention to almost paracontact metric manifolds of dimension 3, which is the lowest dimension for these manifolds. First, we establish that such
manifolds belong only to four  basic classes from the considered classification. Then we define an almost paracontact metric structure on a 3-dimensional Lie group. We determine its Lie algebra by commutators such that the Lie group is a manifold belonging to some of the four basic classes of 3-dimensional almost paracontact metric manifolds. The considered Lie groups are characterized geometrically in terms of their curvature properties. Moreover, we find explicit matrix representations of these Lie groups. Let us note that Lie groups as 3-dimensional almost contact B-metric manifolds were studied in \cite{HMDM} and their matrix representations were obtained in
\cite{HM}.
\section{Preliminaries}\label{sec-2}
A (2n+1)-dimensional smooth manifold $M^{(2n+1)}$
has an \emph{almost paracontact structure} $(\varphi,\xi,\eta)$ if it admits a tensor field
$\varphi$ of type $(1,1)$, a vector field $\xi$ and a 1-form
$\eta$ satisfying the  following  conditions:
\begin{eqnarray}
  \label{f82}
    & &
    \begin{array}{cl}
     (i)   & \varphi^2 = id - \eta \otimes \xi, \quad \eta (\xi)=1, \quad \varphi(\xi)=0,
     \\[5pt]
     (ii) & \textrm{there exists a  distribution $\mathbb {D}: p \in M \longrightarrow \mathbb {D}_p\subset T_pM:$}
     \\[1pt]
     & \textrm{$\mathbb D_p=Ker \eta=\{x\in T_pM: \eta (x)=0\}$,  called {\it paracontact}}
     \\[1pt]
     & \textrm{{\it distribution} generated by $\eta$.}
    \end{array}
\end{eqnarray}
Then the tangent space $T_pM$ at each $p\in M$ is the following orthogonal direct sum
\[
T_pM=\mathbb D_p\oplus span_\mathbb R\{\xi (p)\}
\]
and every vector $x\in T_pM$ can be decomposed uniquely in the manner
\begin{equation}\label{3}
x=hx+vx,
\end{equation}
where $hx=\varphi ^2x \in \mathbb{D}_p$ and $vx=\eta (x).\xi (p) \in span_\mathbb R\{\xi (p)\}$. Using the conditions \eqref{f82} we have
\[
h\xi =0, \quad h^2=h, \quad h\circ \varphi =\varphi \circ h=\varphi ,\quad v\circ h=h\circ v =0.
\]
\par
The tensor field $\varphi $ induces an almost paracomplex structure \cite{KW} on each
fibre on $\mathbb D$ and $(\mathbb D, \varphi , g_{\vert \mathbb D})$ is a $2n$-dimensional
almost paracomplex manifold. Since $g$ is non-degenerate metric on $M$ and $\xi $ is non-isotropic,
the paracontact distribution $\mathbb D$ is non-degenerate.

As immediate consequences of the definition of the almost
paracontact structure we have that the endomorphism $\varphi$ has rank
$2n$,
and $\eta \circ \varphi=0$, (see \cite{B1,B2} for the almost contact case).\\
From now on, we will use  $x, y, z$ for arbitrary elements of $\chi (M)$ or vectors in the tangent space $T_pM$ at $p\in  M$.\\

If a manifold $M^{(2n+1)}$ with $(\varphi,\xi,\eta)$-structure
admits a pseudo-Riemannian metric $g$ such that
\begin{equation}\label{con}
g(\varphi x,\varphi y)=-g(x,y)+\eta (x)\eta (y),
\end{equation}
then we say that $M^{(2n+1)}$ has an almost paracontact metric structure and
$g$ is called \emph{compatible} metric. Any compatible metric $g$ with a given almost paracontact
structure is necessarily of signature $(n+1,n)$.

Setting $y=\xi$, we have
$\eta(x)=g(x,\xi).$

Any almost paracontact structure admits a compatible metric.

The fundamental 2-form
\begin{equation}\label{fund}
\phi(x,y)=g(\varphi x,y)
\end{equation}
is non-degenerate on the horizontal distribution $\mathbb D$ and
$\eta\wedge {\phi}^n\not=0$.
\par
Let $\phi$ be the fundamental 2-form on $(M,\varphi ,\xi ,\eta ,g)$
and $F$ be the covariant derivative of $\phi$ with respect to the
Levi-Civita connection $\nabla $ of $g$, i.e the
tensor field $F$ of type $(0,3)$ is defined by
\begin{equation}\label{4}
F(x,y,z)=(\nabla \phi )(x,y,z)=(\nabla _x\phi )(y,z)=\\
g((\nabla _x\varphi )y,z).
\end{equation}
Because of $(1.1)$ and $(1.3)$ the tensor $F$ has the following properties:
\begin{equation}\label{5}
\begin{array}{ll}
F(x,y,z)=-F(x,z,y), \\
F(x,\varphi y, \varphi z)=F(x,y,z)+\eta(y)F(x,z,\xi)-\eta(z)F(x,y,\xi).
\end{array}
\end{equation}
The following 1-forms are associated with $F$:
\begin{equation}\label{6}
\theta(x)=g^{ij}F(e_i,e_j,x); \,
\theta^*(x)=g^{ij}F(e_i,\varphi e_j,x); \,
\omega(x)=F(\xi,\xi,x),
\end{equation}
where $\{e_i,\xi\}$ $(i=1,\ldots,2n)$ is a basis of $TM$, and $(g^{ij})$ is the inverse matrix of $(g_{ij})$.

We express $\nabla \eta$, $d\eta $, $L_\xi g$ and $d\phi $ in terms of the structure tensor $F$ in the following lemma
\begin{lem}\label{Lemma 2.1}
For arbitrary $x, y, z$ we have:
\begin{equation}\label{2.2}
(\nabla _x\eta)y=g(\nabla _x\xi ,y)=-F(x,\varphi y,\xi );
\end{equation}
\begin{equation}\label{2.3}
d\eta (x,y)=\frac{1}{2}\left((\nabla _x\eta)y-(\nabla _y\eta)x\right)=\frac{1}{2}(-F(x,\varphi y,\xi )+F(y,\varphi x,\xi ));
\end{equation}
\begin{equation}\label{2.4}
(L_\xi g)(x,y)=(\nabla _x\eta)y+(\nabla _y\eta)x=-F(x,\varphi y,\xi )-F(y,\varphi x,\xi );
\end{equation}
\begin{equation}\label{2.5}
d\phi (x,y,z)=\mathop{\s} \limits_{(x,y,z)}F(x,y,z) ,
\end{equation}
where $\mathop{\s} \limits_{(x,y,z)}$ denotes the cyclic sum over $x, y, z$.
\end{lem}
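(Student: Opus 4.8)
The plan is to prove Lemma~\ref{Lemma 2.1} by direct computation, starting from the defining relation \eqref{4}, namely $F(x,y,z)=g((\nabla_x\varphi)y,z)$, together with the compatibility condition \eqref{con} and the algebraic identities in \eqref{f82}. The key preliminary observation is a formula relating $\nabla_x\xi$ to $F$: since $\varphi\xi=0$, differentiating gives $(\nabla_x\varphi)\xi+\varphi(\nabla_x\xi)=0$, and then pairing with $\varphi y$ and using \eqref{con} should produce $(\nabla_x\eta)y=g(\nabla_x\xi,y)=-F(x,\varphi y,\xi)$. More precisely, I would compute $g((\nabla_x\varphi)\xi,\varphi y)=-g(\varphi(\nabla_x\xi),\varphi y)$; by \eqref{con} the right side equals $g(\nabla_x\xi,y)-\eta(\nabla_x\xi)\eta(y)$, and the last term vanishes because $\eta(\nabla_x\xi)=\tfrac12 x(\eta(\xi))=\tfrac12 x(1)=0$ (equivalently $g(\nabla_x\xi,\xi)=0$ from $g(\xi,\xi)=1$). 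On the other hand $F(x,\varphi y,\xi)=g((\nabla_x\varphi)(\varphi y),\xi)$, and one rewrites $(\nabla_x\varphi)(\varphi y)$ using $\varphi^2=\mathrm{id}-\eta\otimes\xi$; alternatively, invoke the skew-symmetry $F(x,y,z)=-F(x,z,y)$ from \eqref{5} and the identity $g((\nabla_x\varphi)\xi,\varphi y)=-g((\nabla_x\varphi)(\varphi y),\xi)$ (which follows from differentiating $g(\varphi a,b)=-g(a,\varphi b)$, a consequence of \eqref{con}) to close the loop. That establishes \eqref{2.2}, and I would also record $(\nabla_x\eta)y=g(\nabla_x\xi,y)$ directly from $\eta(y)=g(y,\xi)$ and metric compatibility of $\nabla$.

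Once \eqref{2.2} is in hand, formulas \eqref{2.3} and \eqref{2.4} are immediate: $d\eta(x,y)=\tfrac12\big((\nabla_x\eta)y-(\nabla_y\eta)x\big)$ and $(L_\xi g)(x,y)=(\nabla_x\eta)y+(\nabla_y\eta)x$ are the standard expressions for the exterior derivative of a 1-form and the Lie derivative of the metric along $\xi$ in terms of the Levi-Civita connection, so one just substitutes \eqref{2.2} into each. I would spell out the two standard identities briefly — $d\eta(x,y)=\tfrac12\big(x(\eta y)-y(\eta x)-\eta([x,y])\big)$ combined with $\nabla$ being torsion-free, and $(L_\xi g)(x,y)=g(\nabla_x\xi,y)+g(x,\nabla_y\xi)$ — and then the result drops out.

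For \eqref{2.5}, I would use that $\nabla$ is torsion-free to write $d\phi(x,y,z)=(\nabla_x\phi)(y,z)-(\nabla_y\phi)(x,z)+(\nabla_z\phi)(x,y)$, and then observe that by the skew-symmetry $F(x,y,z)=-F(x,z,y)$ from \eqref{5} this alternating sum reorganizes into the cyclic sum $\mathop{\s}\limits_{(x,y,z)}F(x,y,z)$. Concretely, $(\nabla_x\phi)(y,z)=F(x,y,z)$, $-(\nabla_y\phi)(x,z)=-F(y,x,z)=F(y,z,x)$ using antisymmetry in the last two slots — wait, one must be careful: $\phi$ is antisymmetric, so $F(y,x,z)=(\nabla_y\phi)(x,z)=-(\nabla_y\phi)(z,x)=-F(y,z,x)$, hence $-F(y,x,z)=F(y,z,x)$ — and similarly $(\nabla_z\phi)(x,y)=F(z,x,y)$, giving $F(x,y,z)+F(y,z,x)+F(z,x,y)$, which is exactly $\mathop{\s}\limits_{(x,y,z)}F(x,y,z)$.

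The only genuinely delicate point is the computation of \eqref{2.2}: one must be scrupulous about where the $\eta\otimes\xi$ correction term in \eqref{con} and in $\varphi^2=\mathrm{id}-\eta\otimes\xi$ enters and verify it cancels (it does, thanks to $g(\nabla_x\xi,\xi)=0$). Everything after that is bookkeeping with the sign conventions coming from $\phi$ being a 2-form and $\nabla$ being torsion-free, so I would present \eqref{2.2} in full and then treat \eqref{2.3}, \eqref{2.4}, \eqref{2.5} as short corollaries.
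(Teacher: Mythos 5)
Your proposal is correct, and it is precisely the standard computation that the paper relies on (the lemma is stated there without proof): the identity \eqref{2.2} follows from differentiating $\varphi\xi=0$, using $g(\nabla_x\xi,\xi)=0$ and the skew-symmetry $F(x,y,z)=-F(x,z,y)$, and \eqref{2.3}--\eqref{2.5} then follow from the torsion-free expressions for $d\eta$, $L_\xi g$ and $d\phi$. The only point requiring care, which you handle correctly, is that \eqref{2.5} as stated uses the convention $d\phi(x,y,z)=(\nabla_x\phi)(y,z)-(\nabla_y\phi)(x,z)+(\nabla_z\phi)(x,y)$ without a $\tfrac13$ prefactor, and the reorganization into the cyclic sum hinges on the antisymmetry of $\phi$ in its two arguments.
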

In \cite{JW} it is proved that a $(2n+1)$-dimensional almost paracontact metric manifold is normal if and only if the following condition holds:
\begin{equation}\label{2.6}
\varphi (\nabla _x\varphi)y-(\nabla _{\varphi x}\varphi)y+(\nabla _x\eta)(y)\xi =0 .
\end{equation}
Moreover, we have that \eqref{2.6} is equivalent to the following equality
\begin{equation}\label{2.7}
F(x, y,\varphi z)+F(\varphi x,y,z)+F(x,\varphi y,\xi )=0 .
\end{equation}

\begin{defn}\label{Definition 2.2}
A $(2n+1)$-dimensional almost paracontact metric manifold is called
\begin{itemize}
\item {\it normal} if $N(x,y)-2d\eta (x,y)\xi = 0$, where
\[
N(x,y)=\varphi ^2[x,y]+[\varphi x,\varphi y]-\varphi [\varphi x,y]-\varphi [x,\varphi y]
\]
is the Nijenhuis torsion tensor of $\varphi $ (see \cite{Z});
\item {\it paracontact metric} if $\phi =d\eta$;
\item  {\it $\alpha $-para-Sasakian} if $(\nabla_x\varphi)y=\alpha(g(x,y)\xi-\eta(y)x)$, where $\alpha\neq 0$ is constant; \item {\it para-Sasakian} if it is normal and paracontact metric;
\item  {\it $\alpha $-para-Kenmotsu} if $(\nabla_x\varphi)y=-\alpha(g(x,\varphi y)\xi+\eta(y)\varphi x)$, where $\alpha\neq 0$ is constant, in particular, para-Kenmotsu if $\alpha=-1$;
\item {\it K-paracontact} if it is paracontact and $\xi$ is Killing vector field;
\item {\it quasi-para-Sasakian} if it is normal and $d\phi =0$.
\end{itemize}
\end{defn}

\begin{rem}\label{Remark 1.}
In \cite{Z} it was proved that $(M,\varphi ,\xi ,\eta ,g)$ is para-Sasakian if and only if $(\nabla_x\varphi)y=-g(x,y)\xi+\eta(y)x$. This result is obtained by $\phi (x,y)=g(x,\varphi y)$. We note that if
$\phi (x,y)=g(\varphi x,y)$, then $(M,\varphi ,\xi ,\eta ,g)$ is para-Sasakian if and only if $(\nabla_x\varphi)y=g(x,y)\xi-\eta(y)x$. Hence, if $\phi (x,y)=g(x,\varphi y)$ (resp.  $\phi (x,y)=g(\varphi x,y)$), then an $\alpha $-para-Sasakian manifold is para-Sasakian if $\alpha=-1$ (resp. $\alpha=1$).
\end{rem}




Let $\mathbb U^ \pi(n)$ be the paraunitary group, i.e. $\mathbb U^ \pi(n)$ consists of paracomplex
matrices $\beta =A+\epsilon B$ \, ($\epsilon ^2=1$; \, $A, B$ are real matrices of type $(n\times n)$) such
that $\beta ^{-1}=\bar \beta ^t$. If $r$ is the real representation of $\mathbb U^ \pi(n)$ then
\[
r(\beta )=\left(\begin {matrix} A & B
\cr B & A\cr \end {matrix}\right) , \quad A^tA-B^tB=I_n, \quad A^tB-B^tA=0 ,
\]
where $\beta \in \mathbb U^ \pi(n)$, $I_n$ denotes the identity matrix of type $(n\times n)$.
We consider the group $\mathbb U^ \pi(n)\times \{1\}$ which consists of matrices $\alpha $ of type
$((n+1)\times (n+1))$ such that
$\alpha =\left(\begin {matrix} &  0  \cr  \beta &  \vdots \cr &  0  \cr 0 \ldots 0 & 1
\end {matrix}\right) , \beta \in \mathbb U^ \pi(n)$. Then $r(\alpha )=
\left(\begin {matrix}A & B & 0  \cr  B  &  A & \vdots   \cr & &  0  \cr  0 &  \ldots 0 & 1
\end {matrix}\right)$. \\
For $\alpha \in \mathbb U^ \pi(n)\times id$ we have $\alpha \xi=\xi , \, \alpha \circ \varphi =
\varphi \circ \alpha $ and $g$ is an isometry with respect to $\alpha$, i.e. the matrices of
$\mathbb U^ \pi(n)\times \{1\}$ preserve the structures $\xi , \varphi , g, \eta$. Hence,
$\mathbb U^ \pi(n)\times \{1\}$ is the structure group of the almost paracontact metric manifolds.
\par
Let $V$ be a $(2n+1)$-dimensional real vector space with an almost paracontact structure $(\varphi,\xi,\eta)$ and a compatible metric $g$ with this structure.
We denote by $\otimes ^0_3V$ the space of the tensors of type $(0,3)$ over $V$. Let $\mathcal{F}$ be the subspace of $\otimes ^0_3V$ defined by
\begin{equation*}
\begin{array}{lr}
\mathcal{F}=\{F\in \otimes ^0_3V : F(x,y,z)=-F(x,z,y)=F(x,\varphi y,\varphi z)-\eta (y)F(x,z,\xi ) \\
\qquad \qquad \qquad \qquad \qquad \qquad +\eta (z)F(x,y,\xi )\}.
\end{array}
\end{equation*}
The metric $g$ on $V$ induces an inner product $\langle , \rangle$ on $\mathcal{F}$ which is defined by
\[
\langle F_1,F_2\rangle=g^{ip}g^{jq}g^{kr}F_1(f_i,f_j,F_k)F_2(f_p,f_q,F_r) ,
\]
where $F_1, F_2 \in \mathcal{F}$ and $\{f_1,\ldots ,f_{2n}\}$ is a basis of $V$.
\par
The standard representation of the structure group $\mathbb U^ \pi(n)\times \{1\}$ in $V$ induces a representation $\lambda $ of $\mathbb U^ \pi(n)\times \{1\}$ in $\mathcal{F}$ in the following manner:
\[
(\lambda (\alpha )F)(x,y,z)=F(\alpha ^{-1}x,\alpha ^{-1}y,\alpha ^{-1}z) ,
\]
for $\alpha \in \mathbb U^ \pi(n)\times \{1\}$ and $F\in \mathcal{F}$. Also, $\lambda (\alpha )$
preserves the inner product $\langle , \rangle$ in $\mathcal{F}$.

\section{On the decomposition of $\mathcal{F}$}\label{sec-7}
In \cite{NZ} we obtained a decomposition of a vector space $\mathcal{F}$ into eleven subspaces
$\mathcal{F}_i$ $(i =1, \ldots ,11 )$, which are mutually orthogonal and
invariant under the action of the structure group $\mathbb {U}^\pi (n)\times \{1\}$. First we found the
following partial decomposition of $\mathcal{F}$ in a direct sum of its subspaces $W_i$ $(i =1,2,3,4 )$, i.e.
\[
\mathcal{F}=W_1\oplus W_2\oplus W_3\oplus W_4 ,
\]
where $W_i$ $(i =1,2,3,4 )$ were defined by
\begin{equation}\label {7.1}
\begin{array}{llll}
W_1=\{F\in \mathcal{F} : F(x,y,z)=F(hx,hy,hz)\} , \\
W_2=\{F\in \mathcal{F} : F(x,y,z)=-\eta (y)F(hx,hz,\xi )+\eta (z)F(hx,hy,\xi )\} , \\
W_3=\{F\in \mathcal{F} : F(x,y,z)=\eta (x)F(\xi ,hy,hz) , \\
W_4=\{F\in \mathcal{F} : F(x,y,z)=\eta (x)\{\eta (y)F(\xi ,\xi ,hz)-\eta (z)F(\xi ,\xi ,hy)\} ,
\end{array}
\end{equation}
for arbitrary vectors $x,y,z \in V$. The subspaces $W_i$ $(i =1,2,3,4 )$ are mutually orthogonal and invariant under the action of $\mathbb {U}^\pi (n)\times \{1\}$.

\subsection{The subspace $W_1$ of $\mathcal{F}$}\label{subsec-7.1}
In \cite{NZ} we obtained  that $W_1=\mathcal{F}_1\oplus \mathcal{F}_2\oplus \mathcal{F}_3$, where the subspaces $\mathcal{F}_i$ $(i=1,2,3)$  of $W_1$ are mutually orthogonal and invariant under the action of $\mathbb {U}^\pi (n)\times \{1\}$.  They were characterized by
\begin{equation}\label{7.2}
\begin{array}{ll}
\mathcal{F}_1=\{F\in \mathcal{F} : F(x,y,z)=\frac{1}{2(n-1)}\{g(x,\varphi y)\theta _F(\varphi z)-
g(x,\varphi z)\theta _F(\varphi y)\\ \\
\qquad \qquad \qquad \qquad \qquad -g(\varphi x,\varphi y)\theta _F(hz)+g(\varphi x,\varphi z)\theta _F(hy)\}\} ,
\end{array}
\end{equation}
\begin{equation}\label{7.3}
\begin{array}{l}
\mathcal{F}_2=\{F\in \mathcal{F} : F(\varphi x ,\varphi y,z)=-F(x,y,z) , \quad \theta _F=0\} ,
\end{array}
\end{equation}

\begin{equation}\label{7.4}
\begin{array}{l}
\mathcal{F}_3=\{F\in \mathcal{F} : F(\varphi x ,\varphi y,z)=F(x,y,z)\} .
\end{array}
\end{equation}
Now, we will show that the subspace $\mathcal{F}_3$ could be decomposed in an orthogonal direct sum of two its subspaces. For this purpose we define the following linear map
\[
k : \mathcal{F}_3\longrightarrow  \mathcal{F}_3  \quad {\text by} \quad
k(F)(x,y,z)=\frac{1}{3}\{F(x,y,z)+F(y,z,x)+F(z,x,y)\}.
\]
We note that from \eqref{7.4} it follows  that $F(\xi ,y,z)=F(x,\xi ,z)=0$ for an arbitrary $F\in  \mathcal{F}_3$. Then one can easily verify that $k(F)(\varphi x ,\varphi y,z)=k(F)(x,y,z)$, i.e.
$k(F)$ belongs to $\mathcal{F}_3$. By direct computations we check that $k$ is a projection
(i.e. $k^2=k$) and it commutes with the action of $\mathbb {U}^\pi (n)\times \{1\}$. We put
\begin{equation}\label{7.5}
\begin{array}{l}
\mathbb{G}_3={\rm Im} k=\{F\in \mathcal{F}_3 : F(x,y,z)=\frac{1}{3}\{\mathop{\s} \limits_{(x,y,z)}F(x,y,z)\}\} ,
\end{array}
\end{equation}
\begin{equation}\label{7.6}
\begin{array}{l}
\mathbb{G}_4={\rm Ker} k=\{F\in \mathcal{F}_3 : \mathop{\s} \limits_{(x,y,z)}F(x,y,z)=0\} .
\end{array}
\end{equation}
\begin{pro}\label{Proposition 7.1}
The subspace $\mathcal{F}_3$ is an orthogonal direct sum of the subspaces $\mathbb{G}_3$ and
$\mathbb{G}_4$. These subspaces are invariant under the action of $\mathbb {U}^\pi (n)\times \{1\}$.
\end{pro}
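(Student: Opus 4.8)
The plan is to obtain everything from the formal properties of the averaging operator $k$ that have already been recorded in the excerpt: idempotency gives the direct sum, self-adjointness of $k$ makes the sum orthogonal, and the equivariance of $k$ gives invariance. So the proof is essentially a piece of linear algebra about a projection on an inner-product space carrying a group representation, and I would not expect to need any further structure theory of $\mathcal{F}_3$.

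First I would spell out the direct-sum decomposition. Because $k^{2}=k$, any $F\in\mathcal{F}_{3}$ splits as $F=k(F)+(F-k(F))$; here $k(F)\in\im k=\mathbb{G}_{3}$, while $k(F-k(F))=k(F)-k^{2}(F)=0$, so $F-k(F)\in\Ker k=\mathbb{G}_{4}$. If $F$ lies in $\mathbb{G}_{3}\cap\mathbb{G}_{4}$, then $F=k(G)$ for some $G$, whence $F=k(G)=k^{2}(G)=k(F)=0$ since $F\in\Ker k$; thus $\mathcal{F}_{3}=\mathbb{G}_{3}\oplus\mathbb{G}_{4}$ as vector spaces. (This is consistent with the explicit descriptions \eqref{7.5}--\eqref{7.6}, $\mathbb{G}_{3}$ being the fixed-point set of $k$ and $\mathbb{G}_{4}$ its kernel.)

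Next I would prove orthogonality, and this is where the only genuine computation sits. Writing $\sigma$ for the cyclic slot permutation $(\sigma F)(x,y,z)=F(y,z,x)$, one has $k=\tfrac13(\id+\sigma+\sigma^{2})$ with $\sigma^{3}=\id$ on $(0,3)$-tensors. I would substitute the definition of $\sigma$ into $\inp{\sigma F_{1}}{F_{2}}=g^{ip}g^{jq}g^{kr}(\sigma F_{1})(f_{i},f_{j},f_{k})F_{2}(f_{p},f_{q},f_{r})$ and relabel the summation indices; this gives $\inp{\sigma F_{1}}{F_{2}}=\inp{F_{1}}{\sigma^{2}F_{2}}=\inp{F_{1}}{\sigma^{-1}F_{2}}$, i.e.\ $\sigma$ is an isometry of $\langle\,,\,\rangle$ and $\sigma^{*}=\sigma^{-1}$. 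It then follows that
\[ k^{*}=\tfrac13\bigl(\id+\sigma^{-1}+\sigma^{-2}\bigr)=\tfrac13\bigl(\id+\sigma^{2}+\sigma\bigr)=k , \]
so $k$ is self-adjoint. Hence, for $F_{1}\in\mathbb{G}_{3}$ (so $k(F_{1})=F_{1}$) and $F_{2}\in\mathbb{G}_{4}$ (so $k(F_{2})=0$), we get $\inp{F_{1}}{F_{2}}=\inp{k(F_{1})}{F_{2}}=\inp{F_{1}}{k(F_{2})}=0$, which makes $\mathcal{F}_{3}=\mathbb{G}_{3}\oplus\mathbb{G}_{4}$ an \emph{orthogonal} direct sum.

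Finally, for invariance I would use that $k$ commutes with each $\lambda(\alpha)$, $\alpha\in\mathbb{U}^{\pi}(n)\times\{1\}$, as already noted: if $F=k(G)\in\mathbb{G}_{3}$ then $\lambda(\alpha)F=\lambda(\alpha)k(G)=k(\lambda(\alpha)G)\in\im k=\mathbb{G}_{3}$, and if $k(F)=0$ then $k(\lambda(\alpha)F)=\lambda(\alpha)k(F)=0$, so $\lambda(\alpha)F\in\Ker k=\mathbb{G}_{4}$. The main obstacle --- really the only thing that is not automatic --- is the index relabelling establishing $\sigma^{*}=\sigma^{-1}$, i.e.\ the invariance of the inner product under applying the same cyclic permutation to all three arguments of both tensors; the one point to watch is which of $\sigma,\sigma^{2}$ appears on the right after relabelling. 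Once that is in place, self-adjointness of the cyclic average $k$, and hence the orthogonality of $\mathbb{G}_{3}$ and $\mathbb{G}_{4}$, come for free.
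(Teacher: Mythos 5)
Your proposal is correct. The direct-sum step and the invariance step coincide with the paper's proof: both rest on $k^{2}=k$ and on $k$ commuting with $\lambda(\alpha)$ (the paper handles $\mathbb{G}_{4}$ by noting that the orthogonal complement of an invariant subspace under an isometric action is invariant, while you argue directly via $k(\lambda(\alpha)F)=\lambda(\alpha)k(F)=0$; both are fine). Where you genuinely diverge is the orthogonality step. The paper first rewrites $\mathbb{G}_{3}$ via the equivalence of the cyclic fixed-point condition with antisymmetry in the first two arguments (its condition \eqref{7.7}), then expands $F''(f_p,f_q,f_r)=-F''(f_q,f_r,f_p)-F''(f_r,f_p,f_q)$ for $F''\in\mathbb{G}_{4}$ and relabels indices to get $\langle F',F''\rangle=-2\langle F',F''\rangle$. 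You instead show that the cyclic shift $\sigma$ is an isometry of $\langle\,,\,\rangle$ with $\sigma^{*}=\sigma^{-1}$, deduce $k^{*}=k$, and conclude that the image and kernel of a self-adjoint idempotent are orthogonal. Both arguments hinge on exactly the same index-relabelling fact (invariance of the trace pairing under a simultaneous cyclic permutation of all three slots of both tensors), so neither is deeper than the other; yours is more conceptual and does not need the auxiliary equivalence \eqref{7.7}, while the paper's yields that equivalence as a by-product, which it then reuses to state the characteristic condition \eqref{7.9} of $\mathbb{G}_{3}$. One small point worth noting either way: the inner product is indefinite here, but orthogonality only asserts the vanishing of the pairing, so no positivity is used in either argument.
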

\begin{proof}
Taking into account that $k$ is a projection in $\mathcal{F}_3$, we have $\mathcal{F}_3={\rm Im} k\oplus {\rm Ker} k=\mathbb{G}_3\oplus \mathbb{G}_4$.  Further, we will show that $\mathbb{G}_3$ and $\mathbb{G}_4$ are orthogonal.
Because for an arbitrary $F\in \mathcal{F}$ the conditions $F(x,y,z)=-F(y,x,z)$ and $F(x,y,z)=\frac{1}{3}\{F(x,y,z)+F(y,z,x)+F(z,x,y)\}$ are equivalent, \eqref{7.5} becomes
\begin{equation}\label{7.7}
\begin{array}{l}
\mathbb{G}_3={\rm Im} k=\{F\in \mathcal{F}_3 : F(x,y,z)=-F(y,x,z)\} .
\end{array}
\end{equation}
Now, we take $F^\prime \in \mathbb{G}_3$ and $F'' \in \mathbb{G}_4$. Using \eqref{7.6} and
\eqref{7.7} we obtain
\begin{equation*}
\begin{array}{lll}
\langle F^\prime ,F''\rangle=g^{ip}g^{jq}g^{kr}F_1(f_i,f_j,F_k)F_2(f_p,f_q,F_r)=
-g^{ip}g^{jq}g^{kr}F^\prime (f_i,f_j,F_k)F''(f_q,f_r,F_p) \\ \\
-g^{ip}g^{jq}g^{kr}F^\prime (f_i,f_j,F_k)F''(f_r,f_p,F_q)=
-g^{jq}g^{ip}g^{kr}F^\prime (f_j,f_i,F_k)F''(f_q,f_p,F_r) \\ \\
-g^{kr}g^{ip}g^{jq}F^\prime (f_k,f_i,F_j)F''(f_r,f_p,F_q)=-2\langle F^\prime ,F''\rangle .
\end{array}
\end{equation*}
Thus we find $\langle F^\prime ,F''\rangle=0$. Hence, $\mathbb{G}_3$ and $\mathbb{G}_4$ are orthogonal.
\par
Finally, taking into account that $k$ commutes with  the action of $\mathbb {U}^\pi (n)\times \{1\}$, for an arbitrary $F^\prime \in \mathbb{G}_3={\rm Im} k$ we have
\begin{equation}\label{7.8}
\lambda (\alpha )(F^\prime )=
\lambda (\alpha )(k(F^\prime ))=k(\lambda (\alpha )(F^\prime )).
\end{equation}
We note that $\lambda (\alpha )(F^\prime )\in \mathcal{F}_3$ because $\mathcal{F}_3$ is invariant under the action of $\mathbb {U}^\pi (n)\times \{1\}$. Then from \eqref{7.8} it follows that
$\lambda (\alpha )(F^\prime )\in \mathbb{G}_3$ which means that $\mathbb{G}_3$ is invariant
under the action of $\mathbb {U}^\pi (n)\times \{1\}$. Since $\lambda (\alpha )$ is an isometry with
respect to the inner product $\langle , \rangle$ in $\mathcal{F}$, the orthogonal complement
$\mathbb{G}_4$ of the invariant subspace $\mathbb{G}_3$ in $\mathcal{F}_3$  is also invariant.
\end{proof}
From now on, we will denote the subspaces $\mathcal{F}_1$ and $\mathcal{F}_2$ by $\mathbb{G}_1$ and $\mathbb{G}_2$ , respectively. In conclusion, we state
\begin{pro}\label{Proposition 7.2}
The decomposition $W_1=\mathbb{G}_1\oplus \mathbb{G}_2\oplus \mathbb{G}_3\oplus \mathbb{G}_4$ is orthogonal and invariant under the action of $\mathbb {U}^\pi (n)\times \{1\}$.
\end{pro}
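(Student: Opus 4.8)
The plan is to assemble the statement from two facts that are already available: the orthogonal, $\mathbb{U}^\pi(n)\times\{1\}$-invariant decomposition $W_1=\mathcal{F}_1\oplus\mathcal{F}_2\oplus\mathcal{F}_3$ established in \cite{NZ}, and \propref{Proposition 7.1}, which refines the last summand into the orthogonal, invariant sum $\mathcal{F}_3=\mathbb{G}_3\oplus\mathbb{G}_4$. Since, by the convention adopted just before the statement, $\mathbb{G}_1=\mathcal{F}_1$ and $\mathbb{G}_2=\mathcal{F}_2$, substituting the refinement of $\mathcal{F}_3$ into the decomposition of $W_1$ immediately yields the direct sum $W_1=\mathbb{G}_1\oplus\mathbb{G}_2\oplus\mathbb{G}_3\oplus\mathbb{G}_4$.

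For orthogonality I would check the six pairs. The pair $\mathbb{G}_1\perp\mathbb{G}_2$ is part of the decomposition of $W_1$ from \cite{NZ}. For $i\in\{1,2\}$ and $j\in\{3,4\}$ one has $\mathbb{G}_j\subset\mathcal{F}_3$ and $\mathbb{G}_i=\mathcal{F}_i\perp\mathcal{F}_3$ in $\mathcal{F}$, hence $\mathbb{G}_i\perp\mathbb{G}_j$; no new computation is needed here, since orthogonality to $\mathcal{F}_3$ automatically gives orthogonality to any subspace of $\mathcal{F}_3$. Finally $\mathbb{G}_3\perp\mathbb{G}_4$ is exactly the orthogonality asserted in \propref{Proposition 7.1}. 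Thus the four subspaces are pairwise orthogonal, and together with the direct-sum property this shows that the sum $W_1=\mathbb{G}_1\oplus\mathbb{G}_2\oplus\mathbb{G}_3\oplus\mathbb{G}_4$ is orthogonal.

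For invariance under $\mathbb{U}^\pi(n)\times\{1\}$: $\mathbb{G}_1=\mathcal{F}_1$ and $\mathbb{G}_2=\mathcal{F}_2$ are invariant by \cite{NZ}, while $\mathbb{G}_3$ and $\mathbb{G}_4$ are invariant by \propref{Proposition 7.1}; a direct sum of invariant subspaces is again invariant, which completes the argument.

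There is essentially no obstacle here, since all the substantive work has already been carried out: in \cite{NZ} for the decomposition $W_1=\mathcal{F}_1\oplus\mathcal{F}_2\oplus\mathcal{F}_3$, and in \propref{Proposition 7.1} for the splitting of $\mathcal{F}_3$ via the symmetrizing projection $k$ (the verifications that $k$ maps $\mathcal{F}_3$ into itself, that $k^2=k$, that $k$ commutes with the structure-group action, and the orthogonality identity $\langle F',F''\rangle=-2\langle F',F''\rangle$). The only point deserving a word of care is the remark that orthogonality of $\mathbb{G}_3,\mathbb{G}_4$ to $\mathbb{G}_1,\mathbb{G}_2$ is inherited from $\mathcal{F}_3\perp\mathcal{F}_1,\mathcal{F}_2$, so that \propref{Proposition 7.1} together with the original decomposition of \cite{NZ} truly suffices.
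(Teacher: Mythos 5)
Your proposal is correct and follows exactly the route the paper intends: Proposition~\ref{Proposition 7.2} is stated as an immediate consequence of the decomposition $W_1=\mathcal{F}_1\oplus\mathcal{F}_2\oplus\mathcal{F}_3$ from \cite{NZ} together with Proposition~\ref{Proposition 7.1}, with $\mathbb{G}_1=\mathcal{F}_1$ and $\mathbb{G}_2=\mathcal{F}_2$ by the adopted notation. Your explicit verification of the six orthogonality pairs and of invariance merely spells out what the paper leaves implicit.
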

The characteristic conditions of $\mathbb{G}_1$ and $\mathbb{G}_2$ are \eqref{7.2} and  \eqref{7.3},
respectively, which were obtained in \cite{NZ}. According to \eqref{7.7} and \eqref{7.6}, the characteristic conditions of $\mathbb{G}_3$ and $\mathbb{G}_4$ are as follows:
\begin{equation}\label{7.9}
\mathbb{G}_3=\{F\in \mathcal{F} : F(\xi ,y,z)=F(x,\xi ,z)=0, \quad   F(x,y,z)=-F(y,x,z)\} ,
\end{equation}
\begin{equation}\label{7.10}
\mathbb{G}_4=\{F\in \mathcal{F} : F(\xi ,y,z)=F(x,\xi ,z)=0, \quad   \mathop{\s} \limits_{(x,y,z)}F(x,y,z)=0\} .
\end{equation}

\subsection{The subspace $W_2$ of $\mathcal{F}$}\label{subsec-7.2}
In \cite{NZ} we decomposed $W_2$ into 6 subspaces which are mutually orthogonal and invariant under  the action of $\mathbb {U}^\pi (n)\times \{1\}$, i.e.
\[
W_2=\mathcal{F}_4\oplus \mathcal{F}_5\oplus \mathcal{F}_6\oplus \mathcal{F}_7\oplus
\mathcal{F}_8\oplus \mathcal{F}_9 .
\]
Taking into account the characteristic condition of $W_2$ in \eqref{7.1}, we rewrite the conditions of
$\mathcal{F}_6, \mathcal{F}_7, \mathcal{F}_8$ and $\mathcal{F}_9$ in an equivalent form to the one in \cite[Theorem 2.1, p. 124]{NZ}. Moreover, we denote the subspaces $\mathcal{F}_4, \mathcal{F}_5, \mathcal{F}_6, \mathcal{F}_7, \mathcal{F}_8, \mathcal{F}_9$ by $\mathbb{G}_5, \mathbb{G}_6, \mathbb{G}_7, \mathbb{G}_8, \mathbb{G}_9$, $\mathbb{G}_{10}$, respectively. So, we have:
\begin{equation*}
\begin{array}{llllll}
\mathbb{G}_5=\mathcal{F}_4=\left\{F\in \mathcal{F} : F(x,y,z)=\displaystyle{\frac{\theta _F(\xi)}{2n}}\{\eta(y)g(\varphi x,\varphi z)-\eta(z)g(\varphi x,\varphi y) \}\right\} . \\ \\

$This is the class of generalized $\alpha$-para-Sasakian manifolds.$ \\ \\

\mathbb{G}_6=\mathcal{F}_5=\left\{F\in \mathcal{F} : F(x,y,z)=-\displaystyle{\frac{\theta _F^*(\xi)}{2n}}\{\eta(y)g(x,\varphi z)-\eta(z)g(x,\varphi y)\}\right\} . \\ \\

$This is the class of generalized $\alpha$-para-Kenmotsu manifolds.$ \\ \\

\mathbb{G}_7=\mathcal{F}_6=\left\{F\in \mathcal{F} : F(x,y,z)=-\eta(y)F(x,z,\xi )+\eta(z)F(x,y,\xi ),
\right .\\  \\
\qquad \qquad \quad \left .F(x,y,\xi )=-F(y,x,\xi )=-F(\varphi x,\varphi y,\xi ), \quad \theta _F^*(\xi)=0\right\} , \\ \\
\mathbb{G}_8=\mathcal{F}_7=\left\{F\in \mathcal{F} : F(x,y,z)=-\eta(y)F(x,z,\xi )+\eta(z)F(x,y,\xi ),
\right .\\  \\
\qquad \qquad \quad \left .F(x,y,\xi )=F(y,x,\xi )=-F(\varphi x,\varphi y,\xi ), \quad \theta _F(\xi)=0\right\} ,
\end{array}
\end{equation*}
\begin{equation*}
\begin{array}{llll}
\mathbb{G}_9=\mathcal{F}_8=\left\{F\in \mathcal{F} : F(x,y,z)=-\eta(y)F(x,z,\xi )+\eta(z)F(x,y,\xi ),
\right .\\  \\
\qquad \qquad \quad \left .F(x,y,\xi )=-F(y,x,\xi )=F(\varphi x,\varphi y,\xi )\right\} , \\ \\
\mathbb{G}_{10}=\mathcal{F}_9=\left\{F\in \mathcal{F} : F(x,y,z)=-\eta(y)F(x,z,\xi )+\eta(z)F(x,y,\xi ),
\right .\\  \\
\qquad \qquad \quad \left .F(x,y,\xi )=F(y,x,\xi )=F(\varphi x,\varphi y,\xi )\right\}.
\end{array}
\end{equation*}

\begin{rem}\label{Remark 7.1}
We call the classes $\mathbb{G}_5$ and $\mathbb{G}_6$ the class of generalized $\alpha$-para-Sasakian and generalized $\alpha$-para-Kenmotsu manifolds, respectively, because $\theta _F(\xi)$ and $\theta _F^*(\xi)$ are functions in general.
\end{rem}

\subsection{The subspaces $W_3$ and $W_4$ of $\mathcal{F}$}\label{subsec-7.3}
As in \cite{NZ} we put $\mathcal{F}_{10}=W_3$ and $\mathcal{F}_{11}=W_4$. Now, we denote
$\mathcal{F}_{10}$ and $\mathcal{F}_{11}$ by $\mathbb{G}_{11}$ and $\mathbb{G}_{12}$, respectively.
These subspaces were represented by
\begin{equation*}
\begin{array}{ll}
\mathbb{G}_{11}=\mathcal{F}_{10}=\left\{F\in \mathcal{F} : F(x,y,z)=\eta(x)F(\xi ,\varphi y,\varphi z)\right\} , \\ \\
\mathbb{G}_{12}=\mathcal{F}_{11}=\left\{F\in \mathcal{F} : F(x,y,z)=\eta(x)\left\{\eta(y)F(\xi ,\xi ,z)-\eta(z)F(\xi ,\xi ,y)\right\}\right\} .
\end{array}
\end{equation*}
\par
Corresponding to the decomposition in \secref{sec-7} of the space $ \mathcal{F}$  into 12 mutually orthogonal and invariant subspaces, we give 12 classes of almost paracontact metric manifolds. An almost paracontact metric manifold $M$ is said to be in the class $\mathbb{G}_i$ $(i=1,\ldots ,12)$ (or $\mathbb{G}_i$-manifold) if at each $p\in M$ the tensor $F$ of $M$ belongs to the subspace $\mathbb{G}_i$. The special class $\mathbb{G}_0$, determined by the condition $F(x,y,z) = 0$, is the intersection of the basic twelve classes. Hence,  $\mathbb{G}_0$ is the class of the almost paracontact metric manifolds with parallel structures, i.e. $\nabla \varphi =\nabla \xi =\nabla \eta =\nabla g=0$.
\par
Finally, by using  the characteristic symmetries  of the structure tensor $F$ of a $(2n+1)$-dimensional almost paracontact metric manifold in the classes $\mathbb{G}_i$ $(i=1,\ldots ,12)$, we obtain
\begin{thm}\label{Theorem 2.1}
The dimensions of the subspaces in the decomposition of the space ${\F}$ are as follows:
\begin{align*}
\begin{array}{lll}
{\rm dim} \, \mathbb{G}_1=2(n-1), & {\rm dim} \, \mathbb{G}_2=(n-1)(n^2-2), & {\rm dim} \, \mathbb{G}_3=\displaystyle\frac{(n-2)(n-1)n}{3}\\ \\
\mathbb{G}_4=\displaystyle\frac{2(n-1)n(n+1)}{3}, & {\rm dim} \, \mathbb{G}_5=1, &
{\rm dim} \, \mathbb{G}_6=1, \\
{\rm dim} \, \mathbb{G}_7=n^2-1, & {\rm dim} \, \mathbb{G}_8=n^2-1, &
{\rm dim} \, \mathbb{G}_9=n(n-1),  \\
{\rm dim} \, \mathbb{G}_{10}=n(n+1), & {\rm dim} \, \mathbb{G}_{11}=n(n-1),
& {\rm dim} \, \mathbb{G}_{12}=2n .
\end{array}
\end{align*}
\end{thm}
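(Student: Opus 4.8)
The plan is to read off each $\dim\mathbb{G}_i$ from the characteristic symmetries of the class by exploiting the paracomplex structure that $\varphi$ induces on $\mathbb{D}$. Since $\varphi^2=\mathrm{id}$ and $g(\varphi x,\varphi y)=-g(x,y)$ on $\mathbb{D}$, the distribution splits $\mathbb{U}^\pi(n)$-invariantly as $\mathbb{D}=\mathbb{D}^+\oplus\mathbb{D}^-$ into the $(\pm1)$-eigenspaces of $\varphi$, both of dimension $n$, each totally $g$-isotropic and non-degenerately paired with the other by $g$; in particular $\mathbb{D}^+$ and $\mathbb{D}^-$ are mutually dual as $\mathbb{U}^\pi(n)$-modules, while $\mathbb{R}\xi$ is trivial. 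Consequently every $F\in\mathcal{F}$ decomposes into components indexed by the eigenvalue type $(\epsilon_1,\epsilon_2,\epsilon_3)\in\{+,-\}^3$ of its three arguments, together with the components having one or more argument along $\xi$; applying $\varphi$ in the $i$-th slot multiplies a component by $\epsilon_i$. Hence each defining condition of the form ``$F(\varphi\cdot,\varphi\cdot,\cdot)=\pm F$'' or ``$F(\cdot,\varphi\cdot,\varphi\cdot)=\pm F$'' simply forces the vanishing of the components with the wrong sign pattern, and the dimension count reduces to counting dimensions of the spaces $\Lambda^\bullet(\mathbb{D}^\pm)^*$ and $S^\bullet(\mathbb{D}^\pm)^*$, with the one-dimensional $g$- or $\phi$-traces removed where the characteristic conditions demand it.

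I would organise the count by the blocks $W_1,\dots,W_4$. In $W_1$ a tensor $F$ is a $3$-tensor on $\mathbb{D}$ that is skew and $\varphi$-invariant in the last two slots (this is what the defining relation of $\mathcal{F}$ becomes when all three arguments lie in $\mathbb{D}$), so it sits in $\mathbb{D}^*\otimes\big(\Lambda^2(\mathbb{D}^+)^*\oplus\Lambda^2(\mathbb{D}^-)^*\big)$; adding $\varphi$-invariance in the first two slots isolates $\mathcal{F}_3=\mathbb{G}_3\oplus\mathbb{G}_4\cong\bigoplus_\epsilon(\mathbb{D}^\epsilon)^*\otimes\Lambda^2(\mathbb{D}^\epsilon)^*$, of dimension $2n\binom n2=n^2(n-1)$. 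By \eqref{7.9}, $\mathbb{G}_3$ consists of the elements of $\mathcal{F}_3$ that are in addition skew in the first two slots, hence totally skew, so $\mathbb{G}_3\cong\Lambda^3(\mathbb{D}^+)^*\oplus\Lambda^3(\mathbb{D}^-)^*$ has dimension $2\binom n3=\frac{(n-2)(n-1)n}3$; since the projection $k$ of \eqref{7.5}--\eqref{7.6} exhibits $\mathcal{F}_3$ as the direct sum of its totally-skew part $\mathbb{G}_3$ and the cyclic-sum-zero part $\mathbb{G}_4$, this gives $\dim\mathbb{G}_4=n^2(n-1)-\frac{(n-2)(n-1)n}3=\frac{2(n-1)n(n+1)}3$. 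For the complement $\mathcal{F}_1\oplus\mathcal{F}_2$ of $\mathcal{F}_3$ in $W_1$ (the tensors $\varphi$-anti-invariant in the first two slots, supported on the patterns $(-\epsilon,\epsilon,\epsilon)$), the dimensions of $\mathbb{G}_1=\mathcal{F}_1$ (the part carrying the trace $\theta_F$) and $\mathbb{G}_2=\mathcal{F}_2$ (the $\theta_F$-free part) are those already established in \cite{NZ}, which I would simply quote.

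For $W_2$ I would use that an $F\in W_2$ is determined by the bilinear form $(x,y)\mapsto F(hx,hy,\xi)$ on $\mathbb{D}$, whence $W_2\cong\mathbb{D}^*\otimes\mathbb{D}^*$ has dimension $4n^2$; the six pieces are cut out by the symmetry type of this form (symmetric or skew), by its $\varphi$-type --- supported on the ``pure'' part $(\mathbb{D}^+)^*\otimes(\mathbb{D}^+)^*\oplus(\mathbb{D}^-)^*\otimes(\mathbb{D}^-)^*$ in the $\varphi$-invariant case and on the ``mixed'' part otherwise --- and, in the two mixed cases, by splitting off the one-dimensional $g$-trace (giving $\mathbb{G}_5$) resp.\ $\phi$-trace (giving $\mathbb{G}_6$); on the pure part both traces vanish identically, so nothing is subtracted. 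This yields $\dim\mathbb{G}_7=\dim\mathbb{G}_8=n^2-1$, $\dim\mathbb{G}_9=2\binom n2=n(n-1)$ and $\dim\mathbb{G}_{10}=2\binom{n+1}2=n(n+1)$, with the internal check $1+1+(n^2-1)+(n^2-1)+n(n-1)+n(n+1)=4n^2$. Finally $W_3=\mathbb{G}_{11}$ is identified through $(y,z)\mapsto F(\xi,hy,hz)$ with the skew $\varphi$-invariant $2$-forms on $\mathbb{D}$, i.e.\ $\Lambda^2(\mathbb{D}^+)^*\oplus\Lambda^2(\mathbb{D}^-)^*$ of dimension $n(n-1)$, and $W_4=\mathbb{G}_{12}$ through $z\mapsto F(\xi,\xi,hz)$ with all of $\mathbb{D}^*$, of dimension $2n$. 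Summing the twelve numbers returns $\dim\mathcal{F}=n(n+1)(2n+1)$, which is the final consistency check.

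The step that needs genuine care is the trace bookkeeping: identifying, for each class, which of $\theta_F,\theta^*_F$ (and the scalars $\theta_F(\xi),\theta^*_F(\xi)$) is the invariant cut out by its characteristic conditions, and checking the dichotomy that on the $\varphi$-invariant (pure-support) pieces these traces are automatically zero --- so no dimension is lost --- whereas on the $\varphi$-anti-invariant (mixed-support) pieces each is an honest one-dimensional summand. The only input not already in \cite{NZ} is the splitting of $\mathcal{F}_3$ into $\mathbb{G}_3$ and $\mathbb{G}_4$, and there the two facts that do the work are $\mathbb{G}_3\cong\Lambda^3(\mathbb{D}^+)^*\oplus\Lambda^3(\mathbb{D}^-)^*$ and that $\mathbb{G}_4$ is the orthogonal complement of $\mathbb{G}_3$ in $\mathcal{F}_3$, so that $\dim\mathbb{G}_4=\dim\mathcal{F}_3-\dim\mathbb{G}_3$.
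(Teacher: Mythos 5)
Your overall strategy is sound, and for most of the twelve classes it is complete and correct: the paper states \thmref{Theorem 2.1} without any proof, and your reduction to the isotropic eigenspace splitting $\mathbb{D}=\mathbb{D}^+\oplus\mathbb{D}^-$ of $\varphi$ correctly yields $\dim\mathcal{F}=n(n+1)(2n+1)$, the block dimensions $\dim W_1=2n^2(n-1)$, $\dim W_2=4n^2$, $\dim W_3=n(n-1)$, $\dim W_4=2n$, the identification $\mathbb{G}_3\cong\Lambda^3(\mathbb{D}^+)^*\oplus\Lambda^3(\mathbb{D}^-)^*$ via \eqref{7.9}, the complementary count for $\mathbb{G}_4$ using \eqref{7.6}, and the six $W_2$-pieces together with the two one-dimensional trace summands $\mathbb{G}_5$, $\mathbb{G}_6$. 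All of these I can confirm against the characteristic conditions in the paper.

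The genuine gap sits exactly where you decline to compute: $\dim\mathbb{G}_1$ and $\dim\mathbb{G}_2$. These are not safely quotable, and your own method, applied to them, does not reproduce the stated values. By \eqref{7.2} an element of $\mathbb{G}_1$ is the image $F_t$ of a $1$-form $t=\theta_F|_{\mathbb{D}}$ under the explicit section, and the normalisation $\tfrac{1}{2(n-1)}$ is chosen precisely so that $\theta_{F_t}=t\circ h$: tracing the four terms of \eqref{7.2} against $g^{ij}$ gives $\bigl(0-1+2n-1\bigr)t(hw)=2(n-1)\,t(hw)$. Hence $t\mapsto F_t$ is an injection of the $2n$-dimensional space of $1$-forms on $\mathbb{D}$ into $\mathbb{G}_1$, so your framework forces $\dim\mathbb{G}_1=2n$ and $\dim\mathbb{G}_2=n^2(n-1)-2n=n(n+1)(n-2)$, not $2(n-1)$ and $(n-1)(n^2-2)$. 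The case $n=2$ is decisive: the $\varphi$-anti-invariant part of $W_1$ is $4$-dimensional, spanned by the components $F(e^-_i,e^+_1,e^+_2)$ and $F(e^+_i,e^-_1,e^-_2)$ with $i=1,2$, and the trace $\theta_F$ recovers all four of these numbers, so $\mathbb{G}_2=\{0\}$ and $\mathbb{G}_1$ is the whole $4$-dimensional anti-invariant part --- not $2$ and $2$ as the theorem asserts. Either there is an extra constraint on $\theta_F$ hidden in the definition of $\mathcal{F}_1$ in \cite{NZ} that \eqref{7.2}--\eqref{7.3} do not record, or the first two entries of the theorem need correcting; in either case you must actually carry out the trace bookkeeping that you yourself flag as the delicate step. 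Note that your only global check --- that the twelve dimensions sum to $n(n+1)(2n+1)$ --- sees only $\dim\mathbb{G}_1+\dim\mathbb{G}_2=n^2(n-1)$ and is therefore blind to exactly this issue.
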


\section{The projections of the structure tensor $F$ in the twelve basic classes of the classification of the almost paracontact metric manifolds}\label{sec-3}

The decompositions of $\mathcal{F}$ in   direct sums  of the subspaces $W_j$ $(j=1,2,3,4)$ and
$\mathbb{G}_i$  $(i=1,\ldots ,12)$ imply that every $F\in \mathcal{F}$ has a unique representation in the form $F(x,y,z)=\sum \limits _{j=1}^{4} F^{W_j}(x,y,z)$ and $F(x,y,z)=
\sum \limits _{i=1}^{12} F^i(x,y,z)$, respectively, where $F^{W_j}\in W_j$ and
$F^i\in \mathbb{G}_i$.
Then it is clear that an almost paracontact metric manifold $(M,\varphi ,\xi ,\eta ,g )$ belongs to a direct sum of two or more basic classes, i.e. $M\in \mathbb{G}_i\oplus \mathbb{G}_j\oplus \ldots $, if and only if the structure tensor $F$ on $M$ is the sum of the corresponding projections $F^i$, $F^j$, $\ldots $, i.e. the following condition is satisfied $F =F^i+F^j+\ldots $. \\
Following the operators defined in \cite[Theorem 2.1, p. 124]{NZ} and the operator $k$ defined in this paper in  \secref{subsec-7.1}, we find the projections $F^{W_j}$ $(j=1,2,3,4)$ and $F^i$ $(i =1, \ldots ,12 )$ of $F\in {\F}$ in the subspaces $W_j$ and $\mathbb{G}_i$, respectively.
These projections are given below:
\begin{equation}\label{0)}
\begin{array}{l}
F^{W_1}=F(\varphi ^2x,\varphi ^2y,\varphi ^2z) ,\\
F^{W_2}=-\eta(y)F(\varphi ^2x,\varphi ^2z,\xi )+\eta(z)F(\varphi ^2x,\varphi ^2y,\xi ) ,\\
F^{W_3}=\eta(x)F(\xi ,\varphi y,\varphi z) , \\
F^{W_4}=\eta(x)\{\eta (y)F(\xi ,\xi ,z)-\eta (z)F(\xi ,\xi ,y)\} .
\end{array}
\end{equation}

\begin{equation}\label{1)}
\begin{array}{l}
F^1(x,y,z)=\displaystyle{\frac{1}{2(n-1)}}\left\{g(x,\varphi y)\theta _{F^1}(\varphi z)-g(x,\varphi z)\theta _{F^1}(\varphi y)\right. \\ \\
\left. \qquad \qquad \quad -g(\varphi x,\varphi y)\theta _{F^1}(\varphi ^2z)+g(\varphi x,\varphi z)\theta _{F^1}(\varphi ^2y)\right\};
\end{array}
\end{equation}
\begin{equation}\label{2)}
\begin{array}{l}
F^2(x,y,z)=\displaystyle{\frac{1}{2}}\left\{F(\varphi ^2x,\varphi ^2y,\varphi ^2z)-F(\varphi x,\varphi ^2y,\varphi z)\right\}\\ \\
\qquad \qquad \qquad -\displaystyle{\frac{1}{2(n-1)}}\left\{g(x,\varphi y)\theta _{F^1}(\varphi z)-g(x,\varphi z)\theta _{F^1}(\varphi y)\right. \\ \\
\left. \qquad \qquad \qquad -g(\varphi x,\varphi y)\theta _{F^1}(\varphi ^2z)+g(\varphi x,\varphi z)\theta _{F^1}(\varphi ^2y)\right\};
\end{array}
\end{equation}
\begin{equation}\label{3')}
\begin{array}{lll}
F^3(x,y,z)=\displaystyle{\frac{1}{6}}\left\{F(\varphi ^2x,\varphi ^2y,\varphi ^2z)+F(\varphi x,\varphi ^2y,\varphi z)\right. \\ \\
\left.+F(\varphi ^2y,\varphi ^2z,\varphi ^2x)+F(\varphi y,\varphi ^2z,\varphi x)\right. \\ \\
\left.+F(\varphi ^2z,\varphi ^2x,\varphi ^2y)+F(\varphi z,\varphi ^2x,\varphi y)\right\};
\end{array}
\end{equation}
\begin{equation}\label{3'')}
\begin{array}{llll}
F^4(x,y,z)=\displaystyle{\frac{1}{2}}\left\{F(\varphi ^2x,\varphi ^2y,\varphi ^2z)+F(\varphi x,\varphi ^2y,\varphi z)\right\} \\ \\
-\displaystyle{\frac{1}{6}}\left\{F(\varphi ^2x,\varphi ^2y,\varphi ^2z)+F(\varphi x,\varphi ^2y,\varphi z)\right. \\ \\
\left.+F(\varphi ^2y,\varphi ^2z,\varphi ^2x)+F(\varphi y,\varphi ^2z,\varphi x)\right. \\ \\
\left.+F(\varphi ^2z,\varphi ^2x,\varphi ^2y)+F(\varphi z,\varphi ^2x,\varphi y)\right\};
\end{array}
\end{equation}
\begin{equation}\label{4)}
\begin{array}{l}
F^5(x,y,z)=\displaystyle{\frac{\theta _{F^5}(\xi)}{2n}}\{\eta(y)g(\varphi x,\varphi z)-
\eta(z)g(\varphi x,\varphi y) \};
\end{array}
\end{equation}
\begin{equation}\label{5)}
\begin{array}{l}
F^6(x,y,z)=-\displaystyle{\frac{\theta _{F^6}^*(\xi)}{2n}}\{\eta(y)g(x,\varphi z)-
\eta(z)g(x,\varphi y) \};
\end{array}
\end{equation}
\begin{equation}\label{6)}
\begin{array}{ll}
F^7(x,y,z)=\displaystyle{-\frac{1}{4}}\eta (y)\left\{F(\varphi ^2x,\varphi ^2z,\xi )-F(\varphi x,\varphi z,\xi )\right.\\ \\
\left.-F(\varphi ^2z,\varphi ^2x,\xi )+F(\varphi z,\varphi x,\xi )\right\}+
\displaystyle{\frac{1}{4}}\eta (z)\left\{F(\varphi ^2x,\varphi ^2y,\xi )\right.\\ \\
\left.-F(\varphi x,\varphi y,\xi )-F(\varphi ^2y,\varphi ^2x,\xi )+F(\varphi y,\varphi x,\xi )\right\}\\ \\
\qquad \qquad \qquad+\displaystyle{\frac{\theta _{F^6}^*(\xi)}{2n}}\{\eta(y)g(x,\varphi z)-
\eta(z)g(x,\varphi y) \};
\end{array}
\end{equation}
\begin{equation}\label{7)}
\begin{array}{l}
F^8(x,y,z)=\displaystyle{-\frac{1}{4}}\eta (y)\left\{F(\varphi ^2x,\varphi ^2z,\xi )-F(\varphi x,\varphi z,\xi )\right.\\ \\
\left.+F(\varphi ^2z,\varphi ^2x,\xi )-F(\varphi z,\varphi x,\xi )\right\}+
\displaystyle{\frac{1}{4}}\eta (z)\left\{F(\varphi ^2x,\varphi ^2y,\xi )\right.\\ \\
\left.-F(\varphi x,\varphi y,\xi )+F(\varphi ^2y,\varphi ^2x,\xi )-F(\varphi y,\varphi x,\xi )\right\}\\ \\
\qquad \qquad \qquad-\displaystyle{\frac{\theta _{F^5}(\xi)}{2n}}\{\eta(y)g(\varphi x,\varphi z)-
\eta(z)g(\varphi x,\varphi y) \};
\end{array}
\end{equation}
\begin{equation}\label{8)}
\begin{array}{l}
F^9(x,y,z)=\displaystyle{-\frac{1}{4}}\eta (y)\left\{F(\varphi ^2x,\varphi ^2z,\xi )+F(\varphi x,\varphi z,\xi )\right.\\ \\
\left.-F(\varphi ^2z,\varphi ^2x,\xi )-F(\varphi z,\varphi x,\xi )\right\}+
\displaystyle{\frac{1}{4}}\eta (z)\left\{F(\varphi ^2x,\varphi ^2y,\xi )\right.\\ \\
\left.+F(\varphi x,\varphi y,\xi )-F(\varphi ^2y,\varphi ^2x,\xi )-F(\varphi y,\varphi x,\xi )\right\};
\end{array}
\end{equation}
\begin{equation}\label{9)}
\begin{array}{l}
F^{10}(x,y,z)=\displaystyle{-\frac{1}{4}}\eta (y)\left\{F(\varphi ^2x,\varphi ^2z,\xi )+F(\varphi x,\varphi z,\xi )\right.\\ \\
\left.+F(\varphi ^2z,\varphi ^2x,\xi )+F(\varphi z,\varphi x,\xi )\right\}+
\displaystyle{\frac{1}{4}}\eta (z)\left\{F(\varphi ^2x,\varphi ^2y,\xi )\right.\\ \\
\left.+F(\varphi x,\varphi y,\xi )+F(\varphi ^2y,\varphi ^2x,\xi )+F(\varphi y,\varphi x,\xi )\right\};
\end{array}
\end{equation}
\begin{equation}\label{10)}
\begin{array}{l}
F^{11}(x,y,z)=\eta(x)F(\xi ,\varphi ^2y,\varphi ^2z);
\end{array}
\end{equation}
\begin{equation}\label{11)}
F^{12}(x,y,z)=\eta(x)\left\{\eta(y)F(\xi ,\xi ,\varphi ^2z)-\eta(z)F(\xi ,\xi ,\varphi ^2y)\right\}.
\end{equation}

Further in this section, using the characteristic conditions of the twelve classes of almost paracontact metric manifolds and the projections of the structure tensor $F$, we relate the obtained classes with those studied in the literature.
\begin{thm}\label{Theorem 2.2}
A $(2n+1)$-dimensional almost paracontact metric manifold $M(\varphi ,\xi ,\\
\eta ,g)$ is normal if and only if $M$ belongs to one of the classes $\mathbb{G}_1$, $\mathbb{G}_2$, $\mathbb{G}_5$, $\mathbb{G}_6$, $\mathbb{G}_7$, $\mathbb{G}_8$ or to the classes which are their direct sums.
\end{thm}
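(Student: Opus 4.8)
The plan is to reduce the assertion to the pointwise linear description of normality. By \eqref{2.6}--\eqref{2.7}, $M$ is normal precisely when its structure tensor $F$ satisfies \eqref{2.7} at every point, i.e.\ when $F$ lies in the subspace $\mathcal S\subset\mathcal F$ cut out by the linear equation \eqref{2.7}. Since \eqref{2.7} is assembled only from the $\mathbb U^\pi(n)\times\{1\}$-invariant data $\varphi,\xi,\eta,g$, the subspace $\mathcal S$ is linear and invariant under the structure group, hence compatible with the decomposition $\mathcal F=\bigoplus_{i=1}^{12}\mathbb G_i$ of \secref{sec-7}: an element $F=\sum_iF^i$ lies in $\mathcal S$ if and only if every projection $F^i$ does. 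So the theorem reduces to deciding, for each basic class, whether $\mathbb G_i\subset\mathcal S$ or $\mathbb G_i\cap\mathcal S=\{0\}$; the claim is that exactly $\mathbb G_1,\mathbb G_2,\mathbb G_5,\mathbb G_6,\mathbb G_7,\mathbb G_8$ fall in the first case.

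I would organise the twelve checks along the coarse splitting $\mathcal F=W_1\oplus W_2\oplus W_3\oplus W_4$. On $W_1=\mathbb G_1\oplus\mathbb G_2\oplus\mathbb G_3\oplus\mathbb G_4$ the tensor $F$ vanishes whenever one of its arguments equals $\xi$, so the last term of \eqref{2.7} disappears and one is left to compare $F(x,y,\varphi z)$ with $F(\varphi x,y,z)$. Evaluating $F(\varphi x,\varphi y,\varphi z)$ in two ways --- once using \eqref{5} to strip the two $\varphi$'s from the last pair of arguments, once using the eigenvalue relation $F(\varphi x,\varphi y,z)=\varepsilon F(x,y,z)$ that splits $W_1$ into the part $\mathbb G_1\oplus\mathbb G_2$ ($\varepsilon=-1$) and the part $\mathbb G_3\oplus\mathbb G_4$ ($\varepsilon=+1$), cf.\ \eqref{7.3}--\eqref{7.4} --- gives $F(\varphi x,y,z)=\varepsilon F(x,y,\varphi z)$. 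Substituting this back, \eqref{2.7} holds identically on $\mathbb G_1\oplus\mathbb G_2$, whereas on $\mathbb G_3\oplus\mathbb G_4$ it becomes $2F(x,y,\varphi z)=0$ and hence forces $F\equiv0$.

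On $W_2=\mathbb G_5\oplus\dots\oplus\mathbb G_{10}$ the characteristic condition in \eqref{7.1} shows that $F$ is determined by the bilinear form $f(x,y)=F(x,y,\xi)$ on $\mathbb D$; substituting into \eqref{2.7} and sorting out the $\eta$-arguments reduces normality to the single identity $f(\varphi x,\varphi y)=-f(x,y)$ on $\mathbb D$. Now $f\mapsto f(\varphi\,\cdot\,,\varphi\,\cdot\,)$ is an involution of the bilinear forms on $\mathbb D$ (as $\varphi^2=\id$ there), and the defining relations of $\mathbb G_5,\dots,\mathbb G_{10}$ recalled in \secref{subsec-7.2} place the forms attached to $\mathbb G_5,\mathbb G_6,\mathbb G_7,\mathbb G_8$ in its $(-1)$-eigenspace and those attached to $\mathbb G_9,\mathbb G_{10}$ in its $(+1)$-eigenspace; thus \eqref{2.7} holds on $\mathbb G_5\oplus\mathbb G_6\oplus\mathbb G_7\oplus\mathbb G_8$ and kills $\mathbb G_9$ and $\mathbb G_{10}$. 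Finally, on $W_3=\mathbb G_{11}$ and $W_4=\mathbb G_{12}$ one inserts the characteristic conditions of \secref{subsec-7.3} into \eqref{2.7}: the middle term $F(\varphi x,y,z)$ vanishes because $\eta\circ\varphi=0$, and the remaining terms then force $F=0$. Assembling the four cases gives $\mathcal S=\mathbb G_1\oplus\mathbb G_2\oplus\mathbb G_5\oplus\mathbb G_6\oplus\mathbb G_7\oplus\mathbb G_8$, which is the assertion.

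The step I expect to be the main obstacle is the opening reduction --- the claim that an invariant linear subspace of $\mathcal F$ is automatically a direct sum of whole classes $\mathbb G_i$. Several of the $\mathbb G_i$ have equal dimension (e.g.\ $\dim\mathbb G_5=\dim\mathbb G_6=1$, $\dim\mathbb G_7=\dim\mathbb G_8$, $\dim\mathbb G_9=\dim\mathbb G_{11}$), so one cannot cheaply invoke ``pairwise inequivalent irreducible constituents''. I would instead argue concretely: once $\mathbb G_1,\mathbb G_2,\mathbb G_5,\mathbb G_6,\mathbb G_7,\mathbb G_8\subset\mathcal S$ is established, subtract from an arbitrary $F\in\mathcal S$ its six projections onto these classes (their explicit forms are \eqref{1)}, \eqref{2)} and \eqref{4)}--\eqref{7)}), landing in $\mathcal S\cap(\mathbb G_3\oplus\mathbb G_4\oplus\mathbb G_9\oplus\mathbb G_{10}\oplus\mathbb G_{11}\oplus\mathbb G_{12})$; the contributions to the left-hand side of \eqref{2.7} coming from the $W_1$-, $W_2$-, $W_3$- and $W_4$-components of such a remainder lie in mutually complementary subspaces of $\otimes^0_3V$, distinguished by the number and position of their $\eta$-factors, so they must vanish separately, and the computations of the preceding two paragraphs then force the remainder to be $0$. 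This keeps the whole argument elementary, with no appeal to irreducibility of the $\mathbb G_i$.
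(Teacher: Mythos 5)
Your proposal follows essentially the same route as the paper's proof: sufficiency is a class-by-class verification of \eqref{2.7}, and necessity is obtained by splitting $F$ along $W_1\oplus W_2\oplus W_3\oplus W_4$, showing that the normality identity annihilates the $W_3$- and $W_4$-components, forces $F(\varphi x,\varphi y,\xi)=-F(x,y,\xi)$ on the $W_2$-block (hence lands it in $\mathbb{G}_5\oplus\mathbb{G}_6\oplus\mathbb{G}_7\oplus\mathbb{G}_8$), and forces $F(\varphi x,\varphi y,z)=-F(x,y,z)$ on the $W_1$-block (hence $\mathbb{G}_1\oplus\mathbb{G}_2$). Your per-block eigenvalue computations are correct and coincide with the paper's equations \eqref{401}--\eqref{405}; your closing remark that one should not invoke irreducibility of the $\mathbb{G}_i$ is well taken and is exactly how the paper implicitly proceeds.

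The one step that needs repair is the decoupling you yourself flagged: the claim that the $W_1$-, $W_2$-, $W_3$-, $W_4$-contributions to the left side of \eqref{2.7} lie in complementary subspaces sorted by their $\eta$-factors. This is true only if the last term of the normality identity reads $\eta(z)F(x,\varphi y,\xi)$; as printed, \eqref{2.7} omits the factor $\eta(z)$, and then the $W_2$-component contributes the purely horizontal, $z$-independent term $F(\varphi^2x,\varphi y,\xi)$, which mixes with the $W_1$-contribution when all arguments are horizontal, so the blocks do not separate. (That the printed \eqref{2.7} is a misprint can be seen by testing it on an $\alpha$-para-Sasakian manifold, where $F(x,y,z)=\alpha\left(g(x,y)\eta(z)-\eta(y)g(x,z)\right)$: the printed left-hand side equals $\alpha\left(\eta(z)-1\right)g(\varphi x,y)\neq 0$, although such manifolds are normal and belong to $\mathbb{G}_5$; taken literally, the printed identity together with $F(x,\varphi y,\xi)=-F(\varphi x,y,\xi)$ would even force $F(x,y,\xi)=0$, contradicting the statement of the theorem.) The paper works with the printed form and decouples differently: it writes down \eqref{404} and its image \eqref{405} under $x\mapsto\varphi x$, $y\mapsto\varphi y$ and adds them, so that the two $\xi$-terms cancel and only the pure $W_1$-condition survives. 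Either remedy works; your argument simply requires the corrected identity with the $\eta(z)$ factor, after which the rest of your proof goes through as written.
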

\begin{proof}
Let $M$ belongs to $\mathbb{G}_i$ $(i=1,2,5,6,7,8)$ or to their direct sums. By direct computations we check that for the structure tensor $F$ of $M$ the condition \eqref{2.7} holds. Hence, $M$ is normal.
\par
Now, let us assume that $M$ is normal. Then  \eqref{2.7} is fulfilled. Replacing $x$ and $y$ with $\xi $ in
\eqref{2.7} we have
\begin{equation}\label{401}
F(\xi ,\xi ,z)=0 .
\end{equation}
Replacing $x$  with $\xi $ in \eqref{2.7} and using \eqref{401} we get
\begin{equation}\label{402}
F(\xi ,y,z)=0 .
\end{equation}
Another consecuence from \eqref{2.7} is
\begin{equation}\label{403}
F(\varphi x,\varphi y,\xi )=-F(x,y,\xi ) ,
\end{equation}
which we obtain substituting in \eqref{2.7} $y$ and $z$ with $\varphi y$ and $\xi $, respectively. The equalities \eqref{401} and \eqref{402} mean that the projections $F^{W_4}$ and $F^{W_3}$ of $F$ vanish. Hence, $F(x,y,z)=F^{W_1}(x,y,z)+F^{W_2}(x,y,z)$. Taking into account \eqref{403} we conclude that $F^{W_2}=F^i$  or $F^{W_2}$ is a sum of $F^i$ $(i=5,6,7,8)$. \\
Next, we replace $x$, $y$ and $z$ in \eqref{2.7}  with $\varphi ^2x$, $\varphi ^2y$ and $\varphi z$, respectively. By using \eqref{5} and \eqref{401} we obtain
\begin{equation}\label{404}
F(\varphi ^2x,\varphi ^2y,\varphi ^2z)+F(\varphi x,\varphi y,\varphi ^2z)+F(x,\varphi y,\xi )=0 .
\end{equation}
We substitute $x$ and $y$ in \eqref{404} with $\varphi x$ and $\varphi y$, respectively. So we get
\begin{equation}\label{405}
F(\varphi x,\varphi y,\varphi ^2z)+F(\varphi ^2x,\varphi ^2y,\varphi ^2z)+F(\varphi x,y,\xi )=0 .
\end{equation}
From \eqref{403} by using \eqref{401} we derive $F(x,\varphi y,\xi )=-F(\varphi x,y,\xi )$. Then \eqref{404} and \eqref{405} imply $F(\varphi ^2x,\varphi ^2y,\varphi ^2z)=-F(\varphi x,\varphi y,\varphi ^2z)$, which shows that $F^{W_1}=F^i$  $(i=1,2)$ or $F^{W_1}=F^1+F^2$.  Summarazing the obtained results we conclude that $F=F^i$ $(i=1,2,5,6,7,8)$ or $F=F^1+F^2+F^5+F^6+F^7+F^8$, which completes the proof.
\end{proof}

Now, by using  \eqref{2.3} we compute $d\eta $ for a $(2n+1)$-dimensional almost paracontact metric manifold $M$ belonging to each of the basic classes. We obtain
\begin{lem}\label{Lemma 2.2}
(a) If  $M\in \mathbb{G}_i$, $i=1,2,3,4,6,7,10,11$, then $d\eta =0$ ; \\
(b) If $M\in \mathbb{G}_5$, then $d\eta =\frac{\theta _F(\xi )}{2n}g(\varphi x,y)$ ; \\
(c) If  $M\in \mathbb{G}_i$, $i=8,9$, then $d\eta =-F^i(x,\varphi y,\xi )$ ; \\
(d) If $M\in \mathbb{G}_{12}$, then $d\eta =\frac{1}{2}\left(\eta(x)F(\xi ,\xi ,\varphi y)-
\eta(y)F(\xi ,\xi ,\varphi x)\right)$.
\end{lem}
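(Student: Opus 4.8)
The plan is to read $d\eta$ off directly from \eqref{2.3}, which says $d\eta(x,y)=\tfrac{1}{2}\bigl(-F(x,\varphi y,\xi)+F(y,\varphi x,\xi)\bigr)$; thus in each class it suffices to compute the single expression $F(x,\varphi y,\xi)$ from the defining relations of that class and then extract its skew-symmetric part in $(x,y)$. Throughout I would freely use the elementary identities $\varphi\xi=0$, $h\xi=\varphi^{2}\xi=0$, $\eta\circ\varphi=0$, $g(\varphi x,\xi)=\eta(\varphi x)=0$, $\varphi^{2}y=y-\eta(y)\xi$, the skew-symmetry $F(x,y,z)=-F(x,z,y)$ from \eqref{5} (which in particular gives $F(x,\xi,\xi)=0$), and the skew-symmetry of the fundamental form, $g(\varphi x,y)=-g(x,\varphi y)$, which follows from \eqref{con}.

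For part (a) I would split the eight classes into four groups. If $i=1,2,3,4$ then $M\in W_{1}$, so $F(x,y,z)=F(hx,hy,hz)$ by \eqref{7.1}; since $h\xi=0$ this forces $F(\cdot,\cdot,\xi)\equiv 0$, hence $d\eta=0$. If $i=11$ then $F(x,y,z)=\eta(x)F(\xi,\varphi y,\varphi z)$, so $F(x,\varphi y,\xi)=\eta(x)F(\xi,\varphi^{2}y,\varphi\xi)=0$ because $\varphi\xi=0$. If $i=6$, substituting $z=\xi$ and $y\mapsto\varphi y$ in the defining formula of $\mathbb{G}_{6}$ and using $\eta(\varphi y)=0$, $\varphi^{2}y=y-\eta(y)\xi$, $g(\varphi x,\xi)=0$ gives $F(x,\varphi y,\xi)=\tfrac{\theta_{F}^{*}(\xi)}{2n}\bigl(g(x,y)-\eta(x)\eta(y)\bigr)$, which is symmetric in $x,y$, so $d\eta=0$. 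Finally, if $i=7$ the defining relations are $F(x,y,\xi)=-F(y,x,\xi)=-F(\varphi x,\varphi y,\xi)$; replacing $y\mapsto\varphi y$ in the last equality and using $\varphi^{2}y=y-\eta(y)\xi$ together with $F(\varphi x,\xi,\xi)=0$ yields $F(x,\varphi y,\xi)=-F(\varphi x,y,\xi)$, and the skew-symmetry relation converts this into $F(x,\varphi y,\xi)=F(y,\varphi x,\xi)$; the case $i=10$, with relations $F(x,y,\xi)=F(y,x,\xi)=F(\varphi x,\varphi y,\xi)$, gives the same conclusion by the identical manipulation. In all these cases the bracket in \eqref{2.3} vanishes.

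For (b), with $M\in\mathbb{G}_{5}$, the substitution $z=\xi$, $y\mapsto\varphi y$ in the defining formula of $\mathbb{G}_{5}$, using $\eta(\varphi y)=0$ and $g(\varphi x,\varphi^{2}y)=g(\varphi x,y)$, gives $F(x,\varphi y,\xi)=-\tfrac{\theta_{F}(\xi)}{2n}\,g(\varphi x,y)$; inserting this into \eqref{2.3} and using $g(\varphi y,x)=-g(\varphi x,y)$ produces $d\eta(x,y)=\tfrac{\theta_{F}(\xi)}{2n}\,g(\varphi x,y)$, which is the stated formula. For (c), with $M\in\mathbb{G}_{i}$, $i=8,9$, one has $F=F^{i}$, and the defining relations are $F(x,y,\xi)=F(y,x,\xi)=-F(\varphi x,\varphi y,\xi)$ for $i=8$ and $F(x,y,\xi)=-F(y,x,\xi)=F(\varphi x,\varphi y,\xi)$ for $i=9$; the same $y\mapsto\varphi y$ manipulation as in (a) shows in both cases that $F(y,\varphi x,\xi)=-F(x,\varphi y,\xi)$, whence \eqref{2.3} gives $d\eta(x,y)=-F(x,\varphi y,\xi)=-F^{i}(x,\varphi y,\xi)$. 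For (d), with $M\in\mathbb{G}_{12}$, the defining formula $F(x,y,z)=\eta(x)\bigl\{\eta(y)F(\xi,\xi,z)-\eta(z)F(\xi,\xi,y)\bigr\}$ gives $F(x,\varphi y,\xi)=-\eta(x)F(\xi,\xi,\varphi y)$, and substituting into \eqref{2.3} yields $d\eta(x,y)=\tfrac{1}{2}\bigl(\eta(x)F(\xi,\xi,\varphi y)-\eta(y)F(\xi,\xi,\varphi x)\bigr)$.

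The computation is essentially mechanical, so the \emph{hard part} is only organizational: keeping track of the twelve defining relations and grouping them efficiently. The two places that genuinely need attention are the use of the skew-symmetry of $\phi$ in (b), which is what makes the two terms of \eqref{2.3} reinforce rather than cancel, and the observation that $\varphi^{2}y=y-\eta(y)\xi$ (not $\varphi^{2}y=y$) is precisely what converts the $\varphi$-(anti)invariance of $F(\cdot,\cdot,\xi)$ in the classes $\mathbb{G}_{7},\dots,\mathbb{G}_{10}$ into the relation between $F(x,\varphi y,\xi)$ and $F(\varphi x,y,\xi)$ that drives (a) and (c).
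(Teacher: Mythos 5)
Your proposal is correct and follows exactly the route the paper indicates (the lemma is stated there without a written-out proof, only the remark that it follows ``by using \eqref{2.3}'' together with the characteristic conditions of the classes). All the individual computations --- the vanishing of $F(\cdot,\cdot,\xi)$ on $W_1$ and $\mathbb{G}_{11}$, the symmetry of $F(x,\varphi y,\xi)$ in $\mathbb{G}_6$, $\mathbb{G}_7$, $\mathbb{G}_{10}$, the antisymmetry in $\mathbb{G}_8$, $\mathbb{G}_9$, and the explicit forms in $\mathbb{G}_5$ and $\mathbb{G}_{12}$ --- check out.
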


Let $\overline {\mathbb{G}}_5$ be the subclass of $\mathbb{G}_5$ which consists of all $(2n+1)$-dimensional $\mathbb{G}_5$-manifolds such that $\theta _F(\xi )=2n$
(resp. $\theta _F(\xi )=-2n$) by $\phi (x,y)=g(\varphi x,y)$ (resp. $\phi (x,y)=g(x,\varphi y)$).

Taking into account \lemref{Lemma 2.2} we establish the truth
of the following proposition

\begin{pro}\label{Proposition 2.1}
Let $M$ be a $(2n+1)$-dimensional $\mathbb{G}_5$-manifold. Then $M$ is paracontact metric if and only if $M$ belongs to $\overline {\mathbb{G}}_5$.
\end{pro}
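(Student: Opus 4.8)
The plan is short: compute $d\eta$ on a $\mathbb{G}_5$-manifold from \lemref{Lemma 2.2} and compare it with the fundamental $2$-form, exploiting the non-degeneracy of $\phi$ on the paracontact distribution.

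Since $M\in\mathbb{G}_5$ its structure tensor coincides with its own projection, $F=F^5$, so \lemref{Lemma 2.2}(b) gives
\[
d\eta(x,y)=\frac{\theta_F(\xi)}{2n}\,g(\varphi x,y)=\frac{\theta_F(\xi)}{2n}\,\phi(x,y)
\]
for all $x,y$, where I use the normalization $\phi(x,y)=g(\varphi x,y)$ (the choice $\phi(x,y)=g(x,\varphi y)$ only changes a sign and produces the value $\theta_F(\xi)=-2n$ in the statement, so I would dispose of it by the same argument). Hence the defining condition $\phi=d\eta$ of a paracontact metric structure is equivalent to
\[
\Bigl(1-\frac{\theta_F(\xi)}{2n}\Bigr)\phi=0 .
\]

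Next I would use that $\phi$ is non-degenerate on the paracontact distribution $\mathbb{D}$, whose fibres have positive dimension $2n$; therefore at each point $p\in M$ there exist $x,y\in\mathbb{D}_p$ with $\phi(x,y)\neq 0$. Evaluating the last identity at such $x,y$ forces $\theta_F(\xi)(p)=2n$, that is, $M\in\overline{\mathbb{G}}_5$. Conversely, if $M\in\overline{\mathbb{G}}_5$ then $\theta_F(\xi)\equiv 2n$ and the displayed formula for $d\eta$ becomes $d\eta=\phi$, so $M$ is paracontact metric.

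I do not expect any genuine obstacle here: once \lemref{Lemma 2.2} is available everything is immediate, and the only point needing care is the bookkeeping of the sign convention for $\phi$ — this is precisely why the critical value is $2n$ in one normalization and $-2n$ in the other, matching the two alternatives in the definition of $\overline{\mathbb{G}}_5$. (If one prefers not to cite \lemref{Lemma 2.2}, the same identity for $d\eta$ follows directly by substituting $F=F^5$ into \eqref{2.3} and simplifying with $\eta\circ\varphi=0$, $\varphi\xi=0$ and $\varphi^2=\id-\eta\otimes\xi$.)
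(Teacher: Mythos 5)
Your proposal is correct and follows exactly the route the paper intends: the paper gives no separate proof but derives the proposition directly from Lemma~\ref{Lemma 2.2}(b), comparing $d\eta=\frac{\theta_F(\xi)}{2n}\,g(\varphi x,y)$ with $\phi$ and using non-degeneracy of $\phi$ on $\mathbb{D}$ to pin down $\theta_F(\xi)=2n$ (or $-2n$ in the other sign convention). Your handling of the converse and of the two normalizations of $\phi$ matches the definition of $\overline{\mathbb{G}}_5$ precisely.
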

\begin{thm}\label{Theorem 2.3}
A $(2n+1)$-dimensional almost paracontact metric manifold $M(\varphi ,\xi ,\\
\eta ,g)$ is paracontact metric if and only if $M$ belongs to  the class $\overline {\mathbb{G}}_5$  or to the classes which are direct sums of $\overline {\mathbb{G}}_5$ with $\mathbb{G}_4$ and
$\mathbb{G}_{10}$.
\end{thm}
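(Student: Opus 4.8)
The plan is to reduce the statement to the definition ``paracontact metric means $\phi = d\eta$'' and translate this equality, via \lemref{Lemma 2.1} and \lemref{Lemma 2.2}, into a condition on the projections $F^i$ of the structure tensor. First I would rewrite $\phi = d\eta$ in terms of $F$: since $\phi(x,y) = g(\varphi x, y)$ and, by \eqref{2.3}, $d\eta(x,y) = \tfrac12(-F(x,\varphi y,\xi) + F(y,\varphi x,\xi))$, the manifold is paracontact metric precisely when
\begin{equation*}
g(\varphi x, y) = \tfrac12\bigl(-F(x,\varphi y,\xi) + F(y,\varphi x,\xi)\bigr)
\end{equation*}
for all $x,y$. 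The key structural observation is that the right-hand side only involves the ``vertical part'' $F(\cdot,\cdot,\xi)$ of $F$, which lives in $W_2$; so the whole condition is insensitive to the $W_1$, $W_3$, $W_4$ components \emph{except} insofar as those components must not spoil the matching. Concretely, $\phi$ is a nondegenerate $2$-form on $\mathbb{D}$ with $\phi(\xi,\cdot)=0$, so one must check which of the twelve classes can produce a nonzero $F(\cdot,\cdot,\xi)$ equal to $-2\phi$ up to the antisymmetrization above, and which classes are forced to vanish or are harmless.

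Next I would go class by class using \lemref{Lemma 2.2} together with the explicit projection formulas \eqref{4)}--\eqref{11)}. For the classes $\mathbb{G}_i$ with $i \in \{1,2,3,6,7,11\}$ we have $d\eta = 0$ and (for $i=1,2,3$) moreover $F(\cdot,\cdot,\xi)=0$ since these sit in $W_1$; so a $\mathbb{G}_i$-component with $d\eta=0$ contributes nothing to the left side of $\phi = d\eta$ either directly through $\phi$ (which is a fixed structure tensor, not built from $F$) --- wait, more carefully: $\phi$ is a \emph{structure} $2$-form, always the same; what varies is $d\eta$. So the equation $\phi = d\eta$ forces $d\eta$ to be a specific nonzero form, which immediately rules out any class where $d\eta \equiv 0$ as a \emph{pure} class, but a \emph{direct sum} containing such a class is fine as long as the total $d\eta$ still equals $\phi$. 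Thus I would argue: decompose $F = \sum_i F^i$; then $d\eta = \sum_i d\eta_{F^i}$ where $d\eta_{F^i}$ is computed from $F^i$ alone, and by \lemref{Lemma 2.2} the only contributions come from $i=5,8,9,12$. For $i=12$, part (d) gives a $d\eta$ of the form $\eta \wedge (\text{something})$, which can never equal $\phi$ unless it vanishes, and one checks it forces $F^{12}=0$ since $\phi(\xi,\cdot)=0$ while any nonzero piece here has a $\xi$-direction; similarly $\mathbb{G}_8$ and $\mathbb{G}_9$ give $d\eta = -F^i(x,\varphi y,\xi)$, and I would show using the characteristic symmetries of $\mathbb{G}_8,\mathbb{G}_9$ (the sign patterns of $F(x,y,\xi)$ under transposition and under $\varphi\otimes\varphi$) that requiring this to equal the \emph{symmetric-under-no-constraint}, $\varphi$-eigenvalue-$(-1)$ form $\phi(x,y)$ is incompatible --- $\phi(x,y)$ is antisymmetric in $x,y$ and satisfies $\phi(\varphi x,\varphi y) = g(\varphi^2 x,\varphi y) = -\phi(x,y) + \eta(x)\phi(\xi,y)$, i.e. it anticommutes with $\varphi\otimes\varphi$ on $\mathbb{D}$, whereas the $\mathbb{G}_8,\mathbb{G}_9$ vertical forms have the opposite $\varphi\otimes\varphi$-behaviour. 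This isolates $\mathbb{G}_5$ as the unique class among the ``twelve'' whose $d\eta$ can be proportional to $\phi$: by (b), $d\eta = \tfrac{\theta_F(\xi)}{2n}\,\phi$, which equals $\phi$ exactly when $\theta_F(\xi) = 2n$ (sign depending on the convention for $\phi$) --- that is, precisely when the $\mathbb{G}_5$-component lies in $\overline{\mathbb{G}}_5$ by \propref{Proposition 2.1}.

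Having pinned down that the only class contributing the required $d\eta$ is $\overline{\mathbb{G}}_5$, it remains to determine which \emph{other} components may be freely added without breaking $\phi = d\eta$. A component $F^i$ is harmless iff $d\eta_{F^i}=0$, i.e. iff $i \in \{1,2,3,4,6,7,10,11\}$ by \lemref{Lemma 2.2}(a). So a priori $M$ is paracontact metric iff $F = F^{\overline{\mathbb{G}}_5} + \sum_{i\in S} F^i$ for some $S \subseteq \{1,2,3,4,6,7,10,11\}$ --- but the theorem claims only $\mathbb{G}_4$ and $\mathbb{G}_{10}$ are allowed, so there must be a further constraint eliminating $\mathbb{G}_1,\mathbb{G}_2,\mathbb{G}_3,\mathbb{G}_6,\mathbb{G}_7$. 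The extra input is that ``paracontact metric'' was \emph{defined} (Definition 2.2) as $\phi = d\eta$, but the operative notion in the literature also requires $\mathbb{D} = \ker\eta$ to be the contact distribution with $d\eta|_{\mathbb{D}}$ nondegenerate equal to $\phi|_{\mathbb{D}}$ --- which is automatic once $\phi = d\eta$ globally. Hmm --- so the real extra constraint must come from the first part of the relation $\phi = d\eta$: the identity \eqref{2.3} shows $d\eta$ is built \emph{entirely} from the $F(\cdot,\varphi\cdot,\xi)$ entries, so if $\phi = d\eta$ then $F(\cdot,\varphi\cdot,\xi)$ is nondegenerate on $\mathbb{D}$; I would trace through \eqref{2.2} and the characteristic conditions to show this forces the $W_2$-part of $F$ to be exactly the $\mathbb{G}_5$-type term (in $\overline{\mathbb{G}}_5$), i.e. kills the $\mathbb{G}_6,\mathbb{G}_7$-parts, while the $W_1$-part is constrained so that only $\mathbb{G}_4$ survives among $\mathbb{G}_1,\mathbb{G}_2,\mathbb{G}_3,\mathbb{G}_4$ --- here I would invoke the characteristic condition \eqref{7.1} of $W_1$ together with the specific form of $\phi$; the point is that the ``$\alpha$-para-Sasakian-like'' part generated by $\phi$ propagates a $\mathbb{G}_4$-component but is incompatible with nonzero $\mathbb{G}_1,\mathbb{G}_2,\mathbb{G}_3$ unless those vanish. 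The main obstacle, and the part requiring genuine care, is this last step: showing that $\phi = d\eta$ does not merely constrain $d\eta$ (the $W_2$-vertical part) but, through the interplay of \eqref{2.2}, \eqref{5}, and the normality-type relation \eqref{2.7} restricted to the $\xi$-slot, also rigidly constrains the horizontal $W_1$-part to lie in $\overline{\mathbb{G}}_5 \oplus \mathbb{G}_4 \oplus \mathbb{G}_{10}$. Once that rigidity is established, the converse --- that any $F$ in $\overline{\mathbb{G}}_5 \oplus \mathbb{G}_4 \oplus \mathbb{G}_{10}$ satisfies $\phi = d\eta$ --- is a direct verification using \lemref{Lemma 2.2}(a),(b) and the projection formulas \eqref{3'')}, \eqref{4)}, \eqref{9)}.
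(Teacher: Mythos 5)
Your reduction of $\phi=d\eta$ to the vertical data $F(\cdot,\cdot,\xi)$ and your identification of $\overline{\mathbb{G}}_5$ via \lemref{Lemma 2.2}(b) and \propref{Proposition 2.1} match the paper's strategy. But there is a genuine gap exactly where you say ``the main obstacle'' lies, and the mechanisms you gesture at cannot close it. The purely horizontal $W_1$-components and the $W_3$-component $\mathbb{G}_{11}$ are completely invisible to both $\phi$ and $d\eta$ (they contribute nothing to $F(\cdot,\varphi\cdot,\xi)$), so no analysis of the equation $\phi=d\eta$ itself --- nondegeneracy of $F(\cdot,\varphi\cdot,\xi)$ on $\mathbb{D}$, the normality relation \eqref{2.7}, or ``propagation'' of an $\alpha$-para-Sasakian part --- can distinguish $\mathbb{G}_4$ from $\mathbb{G}_1,\mathbb{G}_2,\mathbb{G}_3$, nor exclude $\mathbb{G}_{11}$ (which you list among the ``harmless'' classes and then silently drop). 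The missing idea is the closedness consequence: $\phi=d\eta$ forces $d\phi=0$, which by \eqref{2.5} reads $\mathop{\s}\limits_{(x,y,z)}F(x,y,z)=0$. Evaluated at $(x,y,\xi)$ and combined with the relation $F(x,y,\xi)=F(\varphi y,\varphi x,\xi)-2g(\varphi x,\varphi y)$ coming from $\phi=d\eta$, this yields $F(\xi,x,y)=0$ (killing $\mathbb{G}_{11}$, and with $F(\xi,\xi,y)=0$ killing $\mathbb{G}_{12}$) and $F(x,y,\xi)=F(y,x,\xi)$ (killing $\mathbb{G}_6$, $\mathbb{G}_7$, $\mathbb{G}_9$, whose vertical parts are antisymmetric); evaluated on horizontal arguments it gives $\mathop{\s}\limits_{(x,y,z)}F^{W_1}(x,y,z)=0$, which by the characteristic conditions \eqref{7.2}--\eqref{7.4} and \eqref{7.10} forces $F^{W_1}=F^4$. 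This single observation is what the paper's proof pivots on, and your outline never invokes it.

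A secondary error: your proposed elimination of $\mathbb{G}_8$ by a $\varphi\otimes\varphi$-sign mismatch fails. For $F^8$ one has $F^8(\varphi x,\varphi y,\xi)=-F^8(x,y,\xi)$, whence $d\eta_{F^8}(\varphi x,\varphi y)=-d\eta_{F^8}(x,y)$ on $\mathbb{D}$ --- the \emph{same} behaviour as $\phi$, so no contradiction arises this way (the argument does work for $\mathbb{G}_9$). The paper instead sums the $d\eta$-contributions to get $g(\varphi x,y)=\frac{\theta_{F^5}(\xi)}{2n}g(\varphi x,y)-F^8(x,\varphi y,\xi)$ and uses that a $\mathbb{G}_8$-form has $\theta(\xi)=0$ while a nonzero multiple of $\phi$ is of $\mathbb{G}_5$-type, forcing $F^8=0$ and $\theta_{F^5}(\xi)=2n$ simultaneously.
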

\begin{proof}
Let $M$ belongs to $\overline {\mathbb{G}}_5$. Then from \propref{Proposition 2.1} it follows that $M$ is paracontact metric. If $M$ belongs to a direct sum of $\overline {\mathbb{G}}_5$ with $\mathbb{G}_4$, $\mathbb{G}_{10}$, by using \lemref{Lemma 2.2} we verify that $M$ is also paracontact metric.
\par
Now, let $M$ be a paracontact metric manifold. Then $d\eta (x,y)=\phi (x,y)=g(\varphi x,y)$ and from \eqref{2.3} we have
\begin{equation}\label{406}
-F(x,\varphi y,\xi )+F(y,\varphi x,\xi )=2g(\varphi x,y) .
\end{equation}
Replacing $x$ and $y$ in \eqref{406} with $\xi $ and $\varphi y$, respectively, we get
\begin{equation}\label{407}
F(\xi ,\xi ,y )=0 .
\end{equation}
Moreover, replacing $y$ in \eqref{406} with $\varphi y$ we derive
\begin{equation}\label{408}
F(x,y,\xi )=F(\varphi y,\varphi x,\xi )-2g(\varphi x,\varphi y) .
\end{equation}
From the condition $d\eta =\phi $ it follows that $d\phi =0$. Now, $d\phi (x,y,\xi )=0$ implies
\begin{equation}\label{409}
F(x,y,\xi )-F(y,x,\xi )+F(\xi ,x,y)=0 .
\end{equation}
Substituting \eqref{408} in \eqref{409} we have $F(\varphi y,\varphi x,\xi )-F(\varphi x,\varphi y,\xi )+
F(\xi ,x,y)=0$. In the last equality we replace $x$ with $\varphi x$,  $y$ with $\varphi y$ and by using
\eqref{407} we obtain
\begin{equation}\label{410}
F(y,x,\xi )-F(x,y,\xi )+F(\xi ,x,y)=0 .
\end{equation}
The equalities \eqref{409} and \eqref{410} imply $F(\xi ,x,y)=0$. Also, substituting $F(\xi ,x,y)=0$ in \eqref{409} we get
\begin{equation}\label{411}
F(x,y,\xi )=F(y,x,\xi ) .
\end{equation}
Since $F(\xi ,\xi ,y)=F(\xi ,x,y)=0$, we conclude that $F=F^{W_1}+F^{W_2}$. Hence
\begin{equation}\label{412}
d\eta _F=d\eta _{F^{W_1}}+d\eta _{F^{W_2}} .
\end{equation}
Using \eqref{411} and taking into account the characteristic conditions of the twelve classes we obtain $F^{W_2}=F^5+F^8+F^{10}$. From \lemref{Lemma 2.2} it follows that $d\eta _{F^{W_1}}=0$, $d\eta _{F^5}(x,y)=\frac{\theta  _{F^5}(\xi )}{2n}g(\varphi x,y)$,
$d\eta _{F^8}(x,y)=-F^8(x,\varphi y,\xi )$ and $d\eta _{F^{10}}(x,y)=0$.
Then \eqref{412} becomes
\begin{equation}\label{413}
g(\varphi x,y)=\frac{\theta  _{F^5}(\xi )}{2n}g(\varphi x,y)-F^8(x,\varphi y,\xi ) .
\end{equation}
The equality \eqref{413} implies that either $F^8=0$ or  $F^8=F^5$. The case $F^8=F^5$  leads a contradiction. Therefore
$F^{W_2}=F^5+F^{10}$. Now, because $g$ is non-degenerate,  as an immediate consequence from \eqref{413} we obtain $\theta _{F^5}(\xi )=2n$. This means that $F^5=\overline {F}^5$, where by
$\overline {F}^5$ we denote the projection of $F$ in $\overline {\mathbb{G}}_5$. Thus, for $F^{W_2}$ we obtain $F^{W_2}=\overline {F}^5+F^{10}$ or $F^{W_2}=\overline {F}^5$. The conditions $d\eta _{F^{W_1}}=d\eta _{F^{10}}=0$ and \eqref{412} imply that the case $F^{W_2}=F^{10}$ is impossible. In both cases for $F^{W_2}$ we have $\mathop{\s} \limits_{(x,y,z)}F^{W_2}(x,y,z)=0$.
Then from the equality $\mathop{\s} \limits_{(x,y,z)}F(x,y,z)=\mathop{\s} \limits_{(x,y,z)}F^{W_1}(x,y,z)+\mathop{\s} \limits_{(x,y,z)}F^{W_2}(x,y,z)$ it follows that $\mathop{\s} \limits_{(x,y,z)}F^{W_1}(x,y,z)=0$. Taking into account the characteristic conditions of the classes $\mathbb{G}_i$ $(i=1,2,3,4)$ we conclude that  $F^{W_1}=F^4$. Finally, for $F$ we obtain $F=\overline {F}^5$, or
$F=\overline {F}^5+F^4$, or $F=\overline {F}^5+F^{10}$, or $F=\overline {F}^5+F^4+F^{10}$. Sinse $d\eta _{F^4}=0$, the case $F=F^4$ is impossible.
\end{proof}

As an immediate consequence from \thmref{Theorem 2.2} and \thmref{Theorem 2.3} we obtain
\begin{co}\label{Corollary 2.1}
A $(2n+1)$-dimensional almost paracontact metric manifold $M(\varphi ,\xi ,\\
\eta ,g)$ is para-Sasakian if and only if $M$ belongs to  the class $\overline {\mathbb{G}}_5$.
\end{co}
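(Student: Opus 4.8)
The plan is to derive the statement by intersecting the two lists of basic classes provided by \thmref{Theorem 2.2} and \thmref{Theorem 2.3}, exploiting the uniqueness of the decomposition $F=\sum_{i=1}^{12}F^i$ with $F^i\in\mathbb{G}_i$. By \dfnref{Definition 2.2} a $(2n+1)$-dimensional almost paracontact metric manifold is para-Sasakian exactly when it is at the same time normal and paracontact metric, so it is enough to single out those $F\in\mathcal{F}$ that fit both descriptions.

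First I would record the normal case: by \thmref{Theorem 2.2}, $M$ is normal if and only if, in the decomposition of its structure tensor, the only components that may be non-zero are $F^1,F^2,F^5,F^6,F^7,F^8$; equivalently $F^3=F^4=F^9=F^{10}=F^{11}=F^{12}=0$. Next, by \thmref{Theorem 2.3}, $M$ is paracontact metric if and only if $F$ equals $\overline{F}^5$, or $\overline{F}^5+F^4$, or $\overline{F}^5+F^{10}$, or $\overline{F}^5+F^4+F^{10}$, where $\overline{F}^5$ is the projection of $F$ on $\overline{\mathbb{G}}_5$. Intersecting the two descriptions is then immediate: if $M$ is simultaneously normal and paracontact metric, normality forces $F^4=F^{10}=0$, and then the paracontact metric description leaves only the possibility $F=\overline{F}^5$, that is, $M\in\overline{\mathbb{G}}_5$.

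For the converse, if $M\in\overline{\mathbb{G}}_5$ then $M\in\mathbb{G}_5$, so $M$ is normal by \thmref{Theorem 2.2}, and $M$ is paracontact metric by \propref{Proposition 2.1}; hence $M$ is para-Sasakian. I do not anticipate a real obstacle here: once the two preceding theorems are available, the only ingredient is the uniqueness of the orthogonal, invariant decomposition of $\mathcal{F}$, which makes membership in a direct sum of basic classes equivalent to the vanishing of the complementary projections, so the intersection of the two families of classes can be read off component by component. One may additionally observe, using \eqref{4)} with $\theta_F(\xi)=2n$ (resp.\ $\theta_F(\xi)=-2n$ for $\phi(x,y)=g(x,\varphi y)$), that this recovers $(\nabla_x\varphi)y=g(x,y)\xi-\eta(y)x$, in accordance with Remark~\ref{Remark 1.}, but this remark is not needed for the proof.
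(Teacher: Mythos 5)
Your proposal is correct and follows exactly the paper's route: the paper also obtains Corollary~\ref{Corollary 2.1} as an immediate consequence of \thmref{Theorem 2.2} and \thmref{Theorem 2.3}, using \dfnref{Definition 2.2} (para-Sasakian $=$ normal $+$ paracontact metric) and intersecting the two lists of admissible classes, which leaves only $\overline{\mathbb{G}}_5$ since normality excludes the $\mathbb{G}_4$ and $\mathbb{G}_{10}$ components.
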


\begin{rem}\label{Remark 2.1}
The result in  Corollary \ref{Corollary 2.1} is the same as that obtained in \cite{Z}.
\end{rem}

It is known that $\xi $ is Killing vector field if $(L_\xi g)(x,y)=0$. By using  \eqref{2.4} we get
\begin{pro}\label{Proposition 2.2}
The vector field $\xi $ is Killing only in the classes $\mathbb{G}_1$, $\mathbb{G}_2$, $\mathbb{G}_3$,
$\mathbb{G}_4$, $\mathbb{G}_5$, $\mathbb{G}_8$, $\mathbb{G}_9$, $\mathbb{G}_{11}$ and in  the classes which are their direct sums.
\end{pro}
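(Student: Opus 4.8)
The plan is to use Lemma~\ref{Lemma 2.1}, specifically formula \eqref{2.4}, which expresses $(L_\xi g)(x,y) = -F(x,\varphi y,\xi) - F(y,\varphi x,\xi)$ purely in terms of the structure tensor $F$. Since $F$ decomposes as $F = \sum_{i=1}^{12} F^i$ with $F^i \in \mathbb{G}_i$, and since the quantity $F(x,\varphi y,\xi)$ depends only on the ``$\xi$-component'' of $F$ (i.e. the parts $F^{W_2}$, plus contributions from $W_3$, $W_4$ where a $\xi$ sits in the last slot after the substitution $y \mapsto \varphi y$), I would compute $F^i(x,\varphi y,\xi)$ class by class and read off which classes force the symmetric part $F^i(x,\varphi y,\xi) + F^i(y,\varphi x,\xi)$ to vanish identically.

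First I would dispose of the classes where $F(x,y,\xi) = 0$ outright. For $F \in \mathbb{G}_1 = \mathbb{G}_1$ (and $\mathbb{G}_2$), the characteristic condition \eqref{7.2}--\eqref{7.3} gives $F(x,y,z) = F(hx,hy,hz)$, so $F(x,\varphi y,\xi) = 0$ since $h\xi = 0$; likewise for $\mathbb{G}_3, \mathbb{G}_4 \subset W_1$ by \eqref{7.1}. For $\mathbb{G}_{11} = W_3$ and $\mathbb{G}_{12} = W_4$ one computes directly from the projection formulas \eqref{10)}, \eqref{11)}: in $\mathbb{G}_{11}$, $F^{11}(x,\varphi y,\xi) = \eta(x)F(\xi,\varphi^2(\varphi y),\varphi^2\xi) = 0$ because $\varphi^2\xi = 0$. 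So $\mathbb{G}_1,\mathbb{G}_2,\mathbb{G}_3,\mathbb{G}_4,\mathbb{G}_{11}$ all give $L_\xi g = 0$ trivially. Next, for the $W_2$ classes I would use the defining relations on $F(x,y,\xi)$: in $\mathbb{G}_5$ one has $F(x,\varphi y,\xi) = \frac{\theta_F(\xi)}{2n}\{\eta(\varphi y)g(\varphi x,\varphi\xi) - \eta(\xi)g(\varphi x,\varphi^2 y)\}$, which after using $g(\varphi x,\varphi^2 y) = g(\varphi x, y)$ (valid on $\mathbb{D}$) is $-\frac{\theta_F(\xi)}{2n}g(\varphi x,y)$, and this is skew in $x,y$, so the symmetrization in \eqref{2.4} kills it; hence $\mathbb{G}_5$ works. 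The same skew-symmetry argument via the condition $F(x,y,\xi) = -F(y,x,\xi)$ shows $\mathbb{G}_7$, $\mathbb{G}_9$ behave oppositely or similarly: one must check the sign of $F(x,\varphi y,\xi) = \pm F(\varphi x,\varphi y,\xi)$ in each of $\mathbb{G}_7,\mathbb{G}_8,\mathbb{G}_9,\mathbb{G}_{10}$ together with the skew/symmetric behaviour of $F(\cdot,\cdot,\xi)$, and then compute $F(x,\varphi y,\xi) + F(y,\varphi x,\xi)$.

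The key computational step, and the main (though entirely routine) obstacle, is handling the four subclasses $\mathbb{G}_7, \mathbb{G}_8, \mathbb{G}_9, \mathbb{G}_{10}$ of $W_2$ simultaneously. Writing $S(x,y) := F(x,\varphi y,\xi)$ for $F$ in one of these, the relation $F(x,y,\xi) = \varepsilon_1 F(y,x,\xi) = \varepsilon_2 F(\varphi x,\varphi y,\xi)$ (with $\varepsilon_1,\varepsilon_2 \in \{\pm 1\}$ distinguishing the four classes) translates, after substituting $y \mapsto \varphi y$ and using $\varphi^2 y = y - \eta(y)\xi$ together with $\varphi\xi = 0$, into a relation between $S(x,y)$ and $S(y,x)$. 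One finds that $S(x,y) + S(y,x) = 0$ precisely when the product $\varepsilon_1\varepsilon_2$ has one sign and is generically nonzero for the other. Tracking the signs through \eqref{con} (the $g(\varphi x,\varphi y) = -g(x,y) + \eta(x)\eta(y)$ identity, which introduces an extra minus when one moves $\varphi$'s around) is where one must be careful; this yields $L_\xi g = 0$ exactly for $\mathbb{G}_8$ and $\mathbb{G}_9$ among these four, and $L_\xi g \ne 0$ in general for $\mathbb{G}_7$ and $\mathbb{G}_{10}$. For $\mathbb{G}_6 = \mathbb{G}_6$ (generalized $\alpha$-para-Kenmotsu) one computes $F^6(x,\varphi y,\xi) = -\frac{\theta_F^*(\xi)}{2n}\{\eta(\varphi y)g(x,\varphi\xi) - \eta(\xi)g(x,\varphi^2 y)\} = \frac{\theta_F^*(\xi)}{2n}g(x,\varphi^2 y)$, which is symmetric in $x,y$ on $\mathbb{D}$, so its symmetrization does not vanish: $\mathbb{G}_6$ is excluded. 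Similarly $\mathbb{G}_{12}$ gives, via \eqref{2.4} and part (d) of Lemma~\ref{Lemma 2.2}-type computation, a nonzero symmetric contribution $\eta(x)F(\xi,\xi,\varphi y) + \eta(y)F(\xi,\xi,\varphi x)$, so it too is excluded.

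Finally, since \eqref{2.4} is $\RR$-linear in $F$ and the subspaces $\mathbb{G}_i$ are mutually orthogonal, for a manifold lying in a direct sum $\mathbb{G}_{i_1} \oplus \cdots \oplus \mathbb{G}_{i_k}$ one has $L_\xi g = \sum_\ell (L_\xi g)_{F^{i_\ell}}$; the only subtlety is that in principle cancellation between a $\mathbb{G}_6$- or $\mathbb{G}_7$- or $\mathbb{G}_{10}$- or $\mathbb{G}_{12}$-contribution and another could occur, but since the $\mathbb{G}_i$-projections are determined pointwise and independently and the ``bad'' contributions ($\frac{\theta_F^*(\xi)}{2n}g(x,\varphi^2 y)$ from $\mathbb{G}_6$, the non-skew part of $S$ from $\mathbb{G}_7,\mathbb{G}_{10}$, the $\eta(x)F(\xi,\xi,\varphi y)$-term from $\mathbb{G}_{12}$) are linearly independent tensorial expressions living in different pieces of the decomposition, no cancellation is possible. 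Hence $L_\xi g = 0$ if and only if $F$ has components only in $\mathbb{G}_1,\mathbb{G}_2,\mathbb{G}_3,\mathbb{G}_4,\mathbb{G}_5,\mathbb{G}_8,\mathbb{G}_9,\mathbb{G}_{11}$, which is the assertion of the proposition.
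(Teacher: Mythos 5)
Your proposal is correct and follows exactly the route the paper intends: the paper derives Proposition~\ref{Proposition 2.2} directly from \eqref{2.4} by checking, class by class, whether the symmetrization $F^i(x,\varphi y,\xi)+F^i(y,\varphi x,\xi)$ vanishes, which is precisely the computation you carry out (your sign analysis via $F(x,y,\xi)=\varepsilon_1F(y,x,\xi)=\varepsilon_2F(\varphi x,\varphi y,\xi)$ correctly singles out $\mathbb{G}_8$ and $\mathbb{G}_9$, and your exclusion of $\mathbb{G}_6$, $\mathbb{G}_7$, $\mathbb{G}_{10}$, $\mathbb{G}_{12}$ and the no-cancellation remark for direct sums are sound). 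The paper states this without writing out the details, so your version simply makes the same argument explicit.
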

Using \thmref{Theorem 2.3} and \propref{Proposition 2.2} we obtain that among the classes of paracontact metric manifolds the vector field $\xi $ is Killing only in $\overline {\mathbb{G}}_5$ and $\overline {\mathbb{G}}_5\oplus \mathbb{G}_4$. Then we state
\begin{thm}\label{Theorem 2.4}
A $(2n+1)$-dimensional almost paracontact metric manifold $M(\varphi ,\xi ,\\
\eta ,g)$ is K-paracontact metric if and only if $M$ belongs to the classes $\overline {\mathbb{G}}_5$ and  $\overline {\mathbb{G}}_5\oplus \mathbb{G}_4$.
\end{thm}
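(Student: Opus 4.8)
The plan is to characterize K-paracontact manifolds by intersecting the characterization of paracontact metric manifolds in \thmref{Theorem 2.3} with the characterization of manifolds carrying a Killing $\xi$ in \propref{Proposition 2.2}. Indeed, by \dfnref{Definition 2.2} a manifold is K-paracontact exactly when it is both paracontact metric and $\xi$ is Killing, so it suffices to list which of the classes allowed by \thmref{Theorem 2.3} also appear in \propref{Proposition 2.2}.

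First I would recall that, by \thmref{Theorem 2.3}, the paracontact metric manifolds are precisely those in the classes $\overline{\mathbb{G}}_5$, $\overline{\mathbb{G}}_5\oplus\mathbb{G}_4$, $\overline{\mathbb{G}}_5\oplus\mathbb{G}_{10}$ and $\overline{\mathbb{G}}_5\oplus\mathbb{G}_4\oplus\mathbb{G}_{10}$. Next I would invoke \propref{Proposition 2.2}: $\xi$ is Killing exactly in $\mathbb{G}_1,\mathbb{G}_2,\mathbb{G}_3,\mathbb{G}_4,\mathbb{G}_5,\mathbb{G}_8,\mathbb{G}_9,\mathbb{G}_{11}$ and their direct sums. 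Since $\overline{\mathbb{G}}_5\subset\mathbb{G}_5$ and $\mathbb{G}_4$ both lie in this Killing list while $\mathbb{G}_{10}$ does not, the only paracontact metric classes compatible with $\xi$ Killing are $\overline{\mathbb{G}}_5$ and $\overline{\mathbb{G}}_5\oplus\mathbb{G}_4$.

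To make the exclusion of the $\mathbb{G}_{10}$-summand rigorous rather than merely a set-membership remark, I would argue at the level of the structure tensor: if $M$ is paracontact metric then, as in the proof of \thmref{Theorem 2.3}, $F=\overline{F}^5$ or $\overline{F}^5+F^4$ or $\overline{F}^5+F^{10}$ or $\overline{F}^5+F^4+F^{10}$. Compute $(L_\xi g)(x,y)$ using \eqref{2.4}; by \propref{Proposition 2.2} (or directly from \eqref{2.4} and the characteristic conditions of $\overline{\mathbb{G}}_5$, $\mathbb{G}_4$, $\mathbb{G}_{10}$) one gets $(L_\xi g)_{\overline{F}^5}=(L_\xi g)_{F^4}=0$, whereas $(L_\xi g)_{F^{10}}$ is the symmetrization of $-F^{10}(x,\varphi y,\xi)$, which does not vanish unless $F^{10}=0$. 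Hence the Killing condition forces $F^{10}=0$, leaving $F=\overline{F}^5$ or $F=\overline{F}^5+F^4$, i.e.\ $M\in\overline{\mathbb{G}}_5$ or $M\in\overline{\mathbb{G}}_5\oplus\mathbb{G}_4$. Conversely, both of these classes are paracontact metric by \thmref{Theorem 2.3} and have $\xi$ Killing by \propref{Proposition 2.2}, so they are K-paracontact. The main obstacle is the nonvanishing of $(L_\xi g)$ on the $\mathbb{G}_{10}$-component: one must check that the symmetric part of $F^{10}(x,\varphi y,\xi)$ is genuinely nonzero on $\mathbb{G}_{10}$, which follows from the characteristic condition $F(x,y,\xi)=F(y,x,\xi)=F(\varphi x,\varphi y,\xi)$ defining $\mathbb{G}_{10}$ together with \eqref{2.4}.
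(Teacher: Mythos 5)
Your proposal is correct and follows essentially the same route as the paper: the paper likewise derives \thmref{Theorem 2.4} by intersecting the list of paracontact metric classes from \thmref{Theorem 2.3} with the list of classes in \propref{Proposition 2.2} where $\xi$ is Killing, which eliminates the $\mathbb{G}_{10}$-summand. Your extra verification that $(L_\xi g)$ cannot vanish on a nonzero $\mathbb{G}_{10}$-component is a sound (and slightly more explicit) justification of that elimination.
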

Finally,  we check that $d\phi$ vanishes only for normal almost paracontact metric manifolds
belonging to the classes $\mathbb{G}_5$, $\mathbb{G}_8$ and $\mathbb{G}_5\oplus \mathbb{G}_8$. Thus we establish the truth of the following theorem
\begin{thm}\label{Theorem 2.5}
A $(2n+1)$-dimensional almost paracontact metric manifold $M(\varphi ,\xi ,\\
\eta ,g)$ is  quasi-para-Sasakian if and only if $M$ belongs to the classes $\mathbb{G}_5$, $\mathbb{G}_8$ and $\mathbb{G}_5\oplus \mathbb{G}_8$.
\end{thm}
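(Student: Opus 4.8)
The plan is to use the identity $d\phi(x,y,z)=\mathop{\s}\limits_{(x,y,z)}F(x,y,z)$ of \lemref{Lemma 2.1} (formula \eqref{2.5}): a manifold is quasi-para-Sasakian precisely when it is normal and the cyclic sum of its structure tensor $F$ vanishes identically. For sufficiency, note that $\mathbb{G}_5$ and $\mathbb{G}_8$ occur among the classes in \thmref{Theorem 2.2}, so any $M$ in $\mathbb{G}_5$, $\mathbb{G}_8$, or $\mathbb{G}_5\oplus\mathbb{G}_8$ is automatically normal; it remains to check that $\mathop{\s}\limits_{(x,y,z)}F^5=0$ and $\mathop{\s}\limits_{(x,y,z)}F^8=0$. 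By \eqref{4)} and \eqref{7)}, each of $F^5$ and $F^8$ has the form $-\eta(y)S(x,z)+\eta(z)S(x,y)$ with $S$ a \emph{symmetric} bilinear form (a multiple of $g(\varphi x,\varphi z)$ for $F^5$, and $F^8(x,z,\xi)$ for $F^8$), and the cyclic sum of such an expression cancels termwise by the symmetry of $S$. Hence $d\phi=0$ and $M$ is quasi-para-Sasakian.

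For necessity, let $M$ be quasi-para-Sasakian. Normality and \thmref{Theorem 2.2} give $F=F^1+F^2+F^5+F^6+F^7+F^8$, so it suffices to prove $F^1=F^2=F^6=F^7=0$. Since $T^{*}M=\mathbb{D}^{*}\oplus\langle\eta\rangle$, every $3$-form on $M$ splits uniquely into a $3$-form on $\mathbb{D}$ plus a term of the type $\eta\wedge\beta$ with $\beta$ a $2$-form on $\mathbb{D}$, and $d\phi=0$ is equivalent to the vanishing of both components. From the projection formulas I would record: $\mathop{\s}\limits_{(x,y,z)}F^5=\mathop{\s}\limits_{(x,y,z)}F^8=0$ (as above); $F^1,F^2\in W_1$ vanish as soon as an argument equals $\xi$, so their cyclic sums are $3$-forms on $\mathbb{D}$; whereas \eqref{5)} and \eqref{6)} give the mixed contributions $\mathop{\s}\limits_{(x,y,z)}F^6=-\frac1n\,\theta^{*}_{F^6}(\xi)\,\eta\wedge\phi$ and $\mathop{\s}\limits_{(x,y,z)}F^7=2\,\eta\wedge F^7(\cdot,\cdot,\xi)$.

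Splitting $d\phi=0$ then separates the argument. Its mixed part is $\eta\wedge\bigl(2F^7(\cdot,\cdot,\xi)-\frac1n\theta^{*}_{F^6}(\xi)\,\phi\bigr)=0$, and since $\eta\wedge(\cdot)$ is injective on $2$-forms on $\mathbb{D}$ we obtain $2F^7(x,y,\xi)=\frac1n\theta^{*}_{F^6}(\xi)\,\phi(x,y)$. Contracting with $g^{ij}$ after replacing $y$ by $\varphi y$, and using $g(\varphi e_i,\varphi e_j)=-g(e_i,e_j)$, the left side becomes $2\theta^{*}_{F^7}(\xi)$ and the right side $-2\theta^{*}_{F^6}(\xi)$; but $\theta^{*}_{F^7}(\xi)=0$ belongs to the characteristic condition of $\mathbb{G}_7$, so $\theta^{*}_{F^6}(\xi)=0$, whence $F^6=0$ by \eqref{5)} and then $F^7=0$. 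The horizontal part is $\mathop{\s}\limits_{(x,y,z)}(F^1+F^2)=0$ on $\mathbb{D}$; inserting $\varphi$ into the first argument and contracting with $g^{ij}$ gives $\theta^{*}_{F^1}+\theta^{*}_{F^2}=0$. Since $F^2\in\mathcal{F}_2$ obeys $F^2(x,\varphi y,\varphi z)=F^2(x,y,z)$ together with $\theta_{F^2}=0$ (see \eqref{7.3}), one gets $\theta^{*}_{F^2}=\theta_{F^2}\circ\varphi=0$, hence $\theta^{*}_{F^1}=0$; and substituting $x=e_i$, $y=\varphi e_j$ in \eqref{7.2} and contracting shows $\theta^{*}_{F^1}=\theta_{F^1}\circ\varphi$, so $\theta_{F^1}=0$ and therefore $F^1=0$ by \eqref{7.2}. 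Finally $\mathop{\s}\limits_{(x,y,z)}F^2=0$: evaluating this identity at $(\varphi x,y,z)$ and at $(x,\varphi y,z)$ and adding, the relations $F^2(\varphi x,\varphi y,z)=-F^2(x,y,z)$ and $F^2(x,\varphi y,\varphi z)=F^2(x,y,z)$ collapse it to $-2F^2(\varphi z,x,y)=0$, so $F^2=0$. Thus $F=F^5+F^8$, that is, $M\in\mathbb{G}_5$, $\mathbb{G}_8$, or $\mathbb{G}_5\oplus\mathbb{G}_8$.

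I expect the main obstacle to be the two cancellation steps at the end: ruling out a cancellation between the $\mathbb{G}_6$- and $\mathbb{G}_7$-contributions to $d\phi$ (this is exactly where the trace condition $\theta^{*}_{F^7}(\xi)=0$ built into the definition of $\mathbb{G}_7$ is indispensable), and showing that a structure tensor in $\mathcal{F}_2$ with vanishing cyclic sum must be zero. The low-dimensional case is harmless: for $n=1$ one has $\mathbb{G}_1=\mathbb{G}_2=\mathbb{G}_7=0$ by \thmref{Theorem 2.1}, so only the $\mathbb{G}_6$-step survives, and the relation $\theta^{*}_{F^1}=\theta_{F^1}\circ\varphi$ used above (equivalently, the injectivity of $\phi\wedge(\cdot)$ on $1$-forms on $\mathbb{D}$) is needed only for $n\ge2$.
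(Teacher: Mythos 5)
Your proposal is correct and follows the same route the paper intends: normality plus \thmref{Theorem 2.2} reduces $F$ to $F^1+F^2+F^5+F^6+F^7+F^8$, and then $d\phi=\mathop{\s}\limits_{(x,y,z)}F=0$ is analysed componentwise via \eqref{2.5}. The paper merely asserts this check without details, and your write-up (the symmetry/antisymmetry cancellations for $F^5,F^8$ versus $F^6,F^7$, and the injectivity of the cyclic sum on $\mathbb{G}_1\oplus\mathbb{G}_2$) supplies a sound verification of exactly that assertion.
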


\section{The projections of the structure tensor $F$ for dimension 3}\label{sec-4}
Let $(M,\varphi ,\xi ,\eta ,g )$ be a 3-dimensional almost paracontact metric manifold and $\{e_i\}_{i=1}^3=\{e_1,e_2,e_3\}$ be a $\varphi $-basis of
$T_pM$, which satisfies the following conditions:
\[
\begin{array}{ll}
\varphi e_1=e_2, \quad \varphi e_2=e_1, \quad e_3=\xi , \\
g(e_1,e_1)=g(e_3,e_3)=-g(e_2,e_2)=1, \quad g(e_i,e_j)=0, \quad i\neq j \in \{1,2,3\}.
\end{array}
\]
We denote the components of the structure tensor $F$ with respect to the $\varphi $-basis $\{e_i\}_{i=1}^3$ by $F_{ijk}=F(e_i,e_j,e_k)$. By direct computations for arbitrary $x, y, z$, given by $x=x^ie_i$, $y=y^ie_i$, $z=z^ie_i$ with respect to $\{e_i\}_{i=1}^3$, we obtain
\begin{equation}\label {3.1}
\begin{array}{ll}
F(x,y,z)=x^1\left\{F_{113}(y^1z^3-y^3z^1)+F_{123}(y^2z^3-y^3z^2)\right\} \\
+x^2\left\{F_{213}(y^1z^3-y^3z^1)+F_{223}(y^2z^3-y^3z^2)\right\} \\
+x^3\left\{F_{331}(y^3z^1-y^1z^3)+F_{332}(y^3z^2-y^2z^3)\right\} .
\end{array}
\end{equation}
For the components $\theta _F^i=\theta _F(e_i)$, $\theta _F^{* i}=\theta _F^*(e_i)$, $\omega _F^i=\omega (e_i)$ of the Lee forms $\theta _F$, $\theta _F^*$, $\omega _F$ of $F$ we have
\begin{equation}\label {3.2}
\begin{array}{ll}
\theta _F^1=\theta _F^2=\theta _F^{* 1}=\theta _F^{* 2}=0, \quad \theta _F^3=F_{113}-F_{223}, \\
\theta _F^{* 3}=F_{123}-F_{213}, \quad \omega _F^1=F_{331}, \quad \omega _F^2=F_{332}, \quad \omega _F^3=F_{333}=0 .
\end{array}
\end{equation}
\begin{pro}\label {Proposition 3.1}
The structure tensor $F^i$ $(i=1,\ldots ,12)$ of a 3-dimensional almost paracontact metric manifold $(M,\varphi ,\xi ,\eta ,g )$ has the following form in the corresponding basic classes $\mathbb{G}_i$:
\begin{equation}\label {3.3}
\begin{array}{l}
F^1(x,y,z)=F^2(x,y,z)=F^3(x,y,z)=F^4(x,y,z)=0 ; \\ \\
F^5(x,y,z)=\frac{\theta _F^3}{2}\left\{x^1(y^1z^3-y^3z^1)-x^2(y^2z^3-y^3z^2)\right\}, \\
\frac{\theta _F^3}{2}=F_{113}=-F_{131}=-F_{223}=F_{232} ; \\ \\
F^6(x,y,z)=\frac{\theta _F^{* 3}}{2}\left\{x^1(y^2z^3-y^3z^2)-x^2(y^1z^3-y^3z^1)\right\}, \\
\frac{\theta _F^{* 3}}{2}=F_{123}=-F_{132}=-F_{213}=F_{231} ; \\ \\
F^7(x,y,z)=F^8(x,y,z)=F^9(x,y,z)=0 ; \\ \\
F^{10}(x,y,z)=F^{10}_{113}\left\{x^1(y^1z^3-y^3z^1)+x^2(y^2z^3-y^3z^2)\right\} \\
+F^{10}_{123}\left\{x^1(y^2z^3-y^3z^2)+x^2(y^1z^3-y^3z^1)\right\} ,\\
F^{10}_{113}=-F^{10}_{131}=F^{10}_{223}=-F^{10}_{232} , \quad F^{10}_{123}=-F^{10}_{132}=F^{10}_{213}=-F^{10}_{231} ;\\ \\
F^{11}(x,y,z)=0 ; \\ \\
F^{12}(x,y,z)=\omega _F^1x^3(y^3z^1-y^1z^3)+\omega _F^2x^3(y^3z^2-y^2z^3), \\
\omega _F^1=F_{331}=-F_{313}, \quad \omega _F^2=F_{332}=-F_{323} .
\end{array}
\end{equation}
\end{pro}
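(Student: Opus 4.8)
The plan is to substitute the coordinate description \eqref{3.1} of $F$ directly into the projection formulas \eqref{1)}, \eqref{2)}, \eqref{3')}, \eqref{3'')} and \eqref{4)}--\eqref{11)}, using \eqref{3.2} to name the traces that occur. First I record the ingredients in the $\varphi$-basis. Since $\varphi e_1 = e_2$, $\varphi e_2 = e_1$ and $\varphi\xi = 0$, for $x = x^i e_i$ we have $\varphi^2 x = x^1 e_1 + x^2 e_2$, $\varphi x = x^2 e_1 + x^1 e_2$, $\eta(x) = g(x,\xi) = x^3$, and from \eqref{con} also $g(\varphi x,\varphi y) = -x^1 y^1 + x^2 y^2$ and $g(x,\varphi y) = x^1 y^2 - x^2 y^1$. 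The crucial observation is that every monomial on the right of \eqref{3.1} carries a third ($\xi$-)component of the second or the third argument; hence $F(u,v,w) = 0$ whenever $u$, $v$, $w$ all lie in $\mathbb{D}$.

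This observation disposes of most of the statement at once. It gives $F(\varphi^2 x,\varphi^2 y,\varphi^2 z) = 0$ and $F(\varphi x,\varphi^2 y,\varphi z) = 0$, so \eqref{1)}, \eqref{2)}, \eqref{3')} and \eqref{3'')} force $F^1 = F^2 = F^3 = F^4 = 0$, consistent with $\dim\mathbb{G}_i = 0$ $(i=1,2,3,4)$ for $n=1$ in \thmref{Theorem 2.1}, and incidentally making the apparent $\tfrac{1}{2(n-1)}$ singularity in \eqref{1)}--\eqref{2)} moot. Likewise $F(\xi,\varphi^2 y,\varphi^2 z) = 0$ together with \eqref{10)} gives $F^{11} = 0$. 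For $F^{12}$, formula \eqref{11)} combined with the identity $F(\xi,\xi,w) = F_{331}w^1 + F_{332}w^2$ read off \eqref{3.1}, and with \eqref{3.2}, produces exactly the claimed expression and the relations $\omega_F^1 = F_{331} = -F_{313}$, $\omega_F^2 = F_{332} = -F_{323}$.

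For $\mathbb{G}_5$ and $\mathbb{G}_6$ one first identifies the traces in \eqref{4)} and \eqref{5)}. A short check on the $\varphi$-basis (using $\varphi\xi = 0$ and $g^{11} = -g^{22} = 1$) shows $\theta_{F^i}(\xi) = 0$ for all $i\neq 5$ and $\theta^*_{F^i}(\xi) = 0$ for all $i\neq 6$, so $\theta_{F^5}(\xi) = \theta_F(\xi) = \theta_F^3$ and $\theta^*_{F^6}(\xi) = \theta^*_F(\xi) = \theta_F^{*3}$ by \eqref{3.2}. Substituting the expressions above for $g(\varphi x,\varphi y)$ and $g(x,\varphi y)$ into \eqref{4)} and \eqref{5)} then yields precisely the displayed forms of $F^5$ and $F^6$, and evaluating on the $e_i$ gives the stated component identities.

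Finally, the four-term combinations of $F(\varphi^2\cdot,\varphi^2\cdot,\xi)$ and $F(\varphi\cdot,\varphi\cdot,\xi)$ appearing in \eqref{6)}, \eqref{7)}, \eqref{8)} and \eqref{9)} are governed by one elementary computation from \eqref{3.1}: for arbitrary $x$, $z$ one has $F(\varphi^2 x,\varphi^2 z,\xi) + F(\varphi^2 z,\varphi^2 x,\xi) = 2F_{113}x^1z^1 + 2F_{223}x^2z^2 + (F_{123}+F_{213})(x^1z^2+x^2z^1)$; the sum $F(\varphi x,\varphi z,\xi) + F(\varphi z,\varphi x,\xi)$ is the same with $F_{113}$ and $F_{223}$ interchanged; and the two antisymmetric parts $F(\varphi^2 x,\varphi^2 z,\xi) - F(\varphi^2 z,\varphi^2 x,\xi)$ and $F(\varphi x,\varphi z,\xi) - F(\varphi z,\varphi x,\xi)$ equal $\pm(F_{123}-F_{213})(x^1z^2-x^2z^1)$ with opposite signs. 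Substituting these into \eqref{8)} annihilates both $\eta$-brackets, so $F^9 = 0$; substituting into \eqref{6)} and \eqref{7)} leaves only a multiple of $\theta_F^{*3}$, respectively $\theta_F^3$, which is exactly cancelled by the accompanying $\theta^*_{F^6}(\xi)$-, respectively $\theta_{F^5}(\xi)$-, term, so $F^7 = F^8 = 0$; and substituting the symmetric sum into \eqref{9)} and collecting the $\eta(y)$- and $\eta(z)$-coefficients produces the stated form of $F^{10}$ with $F^{10}_{113} = \tfrac{1}{2}(F_{113}+F_{223})$ and $F^{10}_{123} = \tfrac{1}{2}(F_{123}+F_{213})$, the remaining component relations following on evaluation at the $e_i$. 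I do not expect a genuine obstacle here: the whole proof is a disciplined substitution into \eqref{3.1} and \eqref{3.2}; the only subtlety is that one cannot divide by $n-1$ in \eqref{1)}--\eqref{2)} when $n=1$, but the vanishing noted above renders this harmless.
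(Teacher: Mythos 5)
Your proposal is correct and follows essentially the same route as the paper: the paper likewise obtains Proposition 3.1 by expressing $F$ in the $\varphi$-basis via \eqref{3.1} and \eqref{3.2} and substituting directly into the projection formulas \eqref{1)}--\eqref{11)}. Your explicit verifications (the vanishing of $F$ on purely horizontal arguments killing $F^{W_1}$ and $F^{11}$, the identification $\theta_{F^5}(\xi)=\theta_F^3$ and $\theta^*_{F^6}(\xi)=\theta_F^{*3}$, and the cancellations giving $F^7=F^8=F^9=0$) all check out, including the sensible handling of the $\tfrac{1}{2(n-1)}$ factor for $n=1$.
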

By using \eqref{3.3} we have
\begin{pro}\label {Proposition 3.2}
The 3-dimensional almost paracontact metric manifolds belong to the classes $\mathbb{G}_5$,
$\mathbb{G}_6$, $\mathbb{G}_{10}$, $\mathbb{G}_{12}$ and to the classes which are their direct sums.
\end{pro}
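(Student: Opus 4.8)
The plan is to deduce the statement at once from \propref{Proposition 3.1}. Indeed, formula \eqref{3.3} asserts that for every $3$-dimensional almost paracontact metric manifold the projections of its structure tensor $F$ satisfy $F^1 = F^2 = F^3 = F^4 = F^7 = F^8 = F^9 = F^{11} = 0$, while $F^5$, $F^6$, $F^{10}$, $F^{12}$ are the only projections that can be nonzero. Since $F = \sum_{i=1}^{12} F^i$ by the orthogonal decomposition of Section~\ref{sec-3}, it follows that $F = F^5 + F^6 + F^{10} + F^{12}$, i.e. $F \in \mathbb{G}_5 \oplus \mathbb{G}_6 \oplus \mathbb{G}_{10} \oplus \mathbb{G}_{12}$; which of the four basic classes (or which direct sum of some of them) $M$ belongs to is then governed, respectively, by the vanishing or non-vanishing of $\theta_F^3$, $\theta_F^{*3}$, the pair $(F^{10}_{113},F^{10}_{123})$ and the pair $(\omega_F^1,\omega_F^2)$.

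Thus the actual content sits in \propref{Proposition 3.1}, which I would establish first. The point is that on a $3$-dimensional manifold the whole tensor $F$ is pinned down by the six numbers $F_{113}, F_{123}, F_{213}, F_{223}, F_{331}, F_{332}$ as in \eqref{3.1}, every other component being determined from these by the defining symmetries of $\mathcal{F}$ (see \eqref{5}), while the associated $1$-forms reduce to \eqref{3.2}. One then substitutes \eqref{3.1} into each of the projection formulas \eqref{1)}, \eqref{2)}, \eqref{3')}, \eqref{3'')}, \eqref{6)}, \eqref{7)}, \eqref{8)}, \eqref{10)}, using $\varphi e_1 = e_2$, $\varphi e_2 = e_1$, $\varphi \xi = 0$ (so $\varphi^2 e_1 = e_1$, $\varphi^2 e_2 = e_2$, $\varphi^2 \xi = 0$), and reads off that each of these eight projections vanishes identically; the remaining formulas \eqref{4)}, \eqref{5)}, \eqref{9)}, \eqref{11)} yield exactly the stated expressions for $F^5, F^6, F^{10}, F^{12}$.

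An alternative, essentially computation-free route is dimensional. Since the $\mathbb{G}_i$ are mutually orthogonal (\propref{Proposition 7.2} together with the analysis of $W_2, W_3, W_4$), one has $\mathcal{F} = \bigoplus_{i=1}^{12}\mathbb{G}_i$; evaluating the dimension formulas of \thmref{Theorem 2.1} at $n = 1$ gives $\dim \mathbb{G}_i = 0$ for $i \in \{1,2,3,4,7,8,9,11\}$ and $\dim \mathbb{G}_5 = \dim \mathbb{G}_6 = 1$, $\dim \mathbb{G}_{10} = \dim \mathbb{G}_{12} = 2$ (total $6$, matching the six free components in \eqref{3.1}). Hence those eight subspaces are trivial and $F$ must lie in $\mathbb{G}_5 \oplus \mathbb{G}_6 \oplus \mathbb{G}_{10} \oplus \mathbb{G}_{12}$. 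I would present the argument through \propref{Proposition 3.1} as the main line and record this dimensional shortcut as a remark.

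The only obstacle is the bookkeeping inside \propref{Proposition 3.1}: carrying the $\varphi$-action through the eight projection formulas and tracking the signs coming from $g(e_2,e_2) = -1$. Nothing here is deep — it is a finite if slightly tedious verification — and the dimensional argument removes even that, provided \thmref{Theorem 2.1} is taken as given.
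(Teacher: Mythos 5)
Your proposal is correct and follows exactly the paper's route: Proposition 3.2 is stated as an immediate consequence of the projection formulas \eqref{3.3} in \propref{Proposition 3.1}, and the paper's own remark that the assertion ``follows also from \thmref{Theorem 2.1}'' is precisely your dimensional shortcut (at $n=1$ the eight subspaces $\mathbb{G}_i$, $i\in\{1,2,3,4,7,8,9,11\}$, have dimension zero and $1+1+2+2=6$ matches the six free components in \eqref{3.1}). Both of your arguments are sound and coincide with what the paper does.
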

We note that the assertion  in \propref{Proposition 3.2} follows also from \thmref{Theorem 2.1}.
Taking into account \propref{Proposition 3.2}, \thmref{Theorem 2.2}, \thmref{Theorem 2.3}, \corref{Corollary 2.1},  \propref{Proposition 2.2}, \thmref{Theorem 2.4} and \thmref{Theorem 2.5}
we state:
\begin{thm}\label {Theorem 3.1}
(a) The classes of the 3-dimensional normal almost paracontact metric manifolds are $\mathbb{G}_5$, $\mathbb{G}_6$ and $\mathbb{G}_5\oplus \mathbb{G}_6$; \\
(b) The classes of the 3-dimensional paracontact metric manifolds are $\overline {\mathbb{G}}_5$ and
$\overline {\mathbb{G}}_5\oplus \mathbb{G}_{10}$; \\
(c) The class of the 3-dimensional para-Sasakian manifolds is $\overline {\mathbb{G}}_5$; \\
(d) The class of the 3-dimensional K-paracontact metric manifolds is $\overline {\mathbb{G}}_5$; \\
(e)  The class of the 3-dimensional quasi-para-Sasakian manifolds is $\mathbb{G}_5$.
\end{thm}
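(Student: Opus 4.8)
The plan is to obtain every assertion by intersecting the general class characterizations proved above with the structural restriction that dimension three imposes. The key observation is that, by \propref{Proposition 3.2} (equivalently, by specializing the dimension formulas of \thmref{Theorem 2.1} to $n=1$), on a $3$-dimensional almost paracontact metric manifold the projections $F^i$ vanish for every $i\notin\{5,6,10,12\}$; in particular $\mathbb{G}_1=\mathbb{G}_2=\mathbb{G}_3=\mathbb{G}_4=\mathbb{G}_7=\mathbb{G}_8=\mathbb{G}_9=\mathbb{G}_{11}=\{0\}$ in this dimension, so such a manifold can only belong to $\mathbb{G}_5,\mathbb{G}_6,\mathbb{G}_{10},\mathbb{G}_{12}$ or to direct sums of these. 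Hence, to prove each part it suffices to take, from the corresponding general list of admissible classes, only those classes (and those direct sums of classes) that survive after the vanishing factors are deleted.

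For (a) I would invoke \thmref{Theorem 2.2}: $M$ is normal precisely when $M$ lies in one of $\mathbb{G}_1,\mathbb{G}_2,\mathbb{G}_5,\mathbb{G}_6,\mathbb{G}_7,\mathbb{G}_8$ or in a direct sum of these; deleting the classes that are trivial in dimension three leaves $\mathbb{G}_5$, $\mathbb{G}_6$ and $\mathbb{G}_5\oplus\mathbb{G}_6$. For (c), \corref{Corollary 2.1} says the para-Sasakian class is $\overline{\mathbb{G}}_5$, which is a subclass of $\mathbb{G}_5$ and therefore available in dimension three; one should add the remark, read off from the $3$-dimensional expression for $F^5$ in \propref{Proposition 3.1}, that the defining normalization $\theta_F(\xi)=\pm2$ of $\overline{\mathbb{G}}_5$ can actually be met, so the class is nonempty. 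For (e), \thmref{Theorem 2.5} gives the quasi-para-Sasakian classes $\mathbb{G}_5$, $\mathbb{G}_8$, $\mathbb{G}_5\oplus\mathbb{G}_8$, and since $\mathbb{G}_8$ is trivial when $n=1$ only $\mathbb{G}_5$ remains.

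Parts (b) and (d) are the analogous deductions in which the factor that disappears is $\mathbb{G}_4$ (of dimension $\frac{2(n-1)n(n+1)}{3}=0$ for $n=1$). From \thmref{Theorem 2.3} the paracontact metric manifolds are those in $\overline{\mathbb{G}}_5$ or in a direct sum of $\overline{\mathbb{G}}_5$ with $\mathbb{G}_4$ and $\mathbb{G}_{10}$; the surviving classes are $\overline{\mathbb{G}}_5$ and $\overline{\mathbb{G}}_5\oplus\mathbb{G}_{10}$, which is (b). From \thmref{Theorem 2.4} (which already incorporates the Killing condition of \propref{Proposition 2.2}) the K-paracontact classes are $\overline{\mathbb{G}}_5$ and $\overline{\mathbb{G}}_5\oplus\mathbb{G}_4$, of which only $\overline{\mathbb{G}}_5$ survives, giving (d).

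I do not expect any serious obstacle: the argument is purely a matter of bookkeeping. The only point needing a little care is the combinatorics of which \emph{direct sums} remain admissible once the trivial factors are removed, together with confirming that the classes listed in (a)--(e) are genuinely realized in dimension three — and that last point is supplied by the explicit Lie group examples constructed in the following section.
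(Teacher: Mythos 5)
Your proposal is correct and follows essentially the same route as the paper, which derives Theorem 3.1 precisely by combining Proposition 3.2 (equivalently, the $n=1$ specialization of the dimension formulas in Theorem 2.1, under which only $\mathbb{G}_5,\mathbb{G}_6,\mathbb{G}_{10},\mathbb{G}_{12}$ are nontrivial) with Theorems 2.2, 2.3, 2.4, 2.5 and Corollary 2.1. The bookkeeping of which direct summands survive is exactly as you describe, and the nonemptiness of the resulting classes is indeed witnessed by the Lie group examples of the later sections.
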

\begin{rem}\label{Remark 3.1}
Well known result in the literature is that every $(2n+1)$-dimensional para-Sasakian manifold $M$ is a K-paracontact metric manifold but the converse is true only if $M$ is 3-dimensional. The assertions in
Corollary \ref{Corollary 2.1}, \thmref{Theorem 2.4} and (c), (d) from \thmref{Theorem 3.1} agree with
this result.
\end{rem}

\section{3-dimensional Lie algebras corresponding to Lie groups with almost paracontact metric structure}\label{sec-5}
Let $L$ be a 3-dimensional real connected Lie group and ${\g}$ be its Lie algebra with a basis $\{E_1,E_2,E_3\}$ of left invariant vector fields. We define
an almost paracontact structure $(\varphi ,\xi ,\eta)$ and a semi-Riemannian metric $g$ in the following way:
\[
\begin{array}{llll}
\varphi E_1=E_2 , \quad \varphi E_2=E_1 , \quad \varphi E_3=0 \\
\xi =E_3 , \quad \eta (E_3)=1 , \quad \eta (E_1)=\eta (E_2)=0 , \\
g(E_1,E_1)=g(E_3,E_3)=-g(E_2,E_2)=1 ,\\
\quad g(E_i,E_j)=0, \quad i\neq j \in \{1,2,3\}.
\end{array}
\]
Then $(L,\varphi ,\xi ,\eta ,g)$ is a 3-dimensional almost paracontact metric manifold. Since the metric $g$ is left invariant the Koszul equality becomes
\begin{equation}\label {4.1}
\begin{array}{l}
2g(\nabla _xy,z)=g([x,y],z)+g([z,x],y)+g([z,y],x) ,
\end{array}
\end{equation}
where $\nabla $ is the Levi-Civita connection of $g$. By using \eqref{4.1} we find the components $F_{ijk}=F(E_i,E_j,E_k)$,
$(i,j,k \in \{1,2,3\})$ of the tensor $F$:
\begin{equation}\label {4.2}
\begin{array}{ll}
2F_{ijk}=g([E_i,\varphi E_j]-\varphi [E_i,E_j],E_k)+g([E_k,\varphi E_j]+[\varphi E_k,E_j],E_i) \\
\,\, \, \quad \quad +g([\varphi E_k,E_i]-\varphi[E_k,E_i],E_j) .
\end{array}
\end{equation}
Let the commutators of ${\g}$ be defined by $[E_i,E_j]=C_{ij}^kE_k$, where the structure constants $C_{ij}^k$ are real numbers and $C_{ij}^k=-C_{ji}^k$.
Then from \eqref{4.2} for the non-zero components $F_{ijk}$ we obtain
\begin{equation}\label {4.3}
\begin{array}{llll}
F_{113}=-F_{131}=\frac{1}{2}(C_{12}^3+C_{13}^2-C_{23}^1), \\ \\
F_{223}=-F_{232}=\frac{1}{2}(C_{13}^2-C_{12}^3-C_{23}^1), \\ \\
F_{123}=-F_{132}=-C_{13}^1 , \quad F_{213}=-F_{231}=C_{23}^2 , \\ \\
F_{331}=-F_{313}=C_{23}^3 , \quad F_{332}=-F_{323}=C_{13}^3 .
\end{array}
\end{equation}
Taking into account \eqref{3.2} and \eqref{4.3}, for the non-zero components of $\theta _F$, $\theta _F^*$, $\omega _F$ we have
\begin{equation}\label {4.4}
\theta _F^3=C_{12}^3, \quad \theta _F^{* 3}=-C_{13}^1-C_{23}^2, \quad \omega _F^1=C_{23}^3, \quad \omega _F^2=C_{13}^3.
\end{equation}
Using \eqref{3.3}, \eqref{4.3}, \eqref{4.4} and applying the Jacobi identity
\[
\mathop{\s} \limits_{E_i,E_j,E_k}\bigl[[E_i,E_j],E_k\bigr]=0
\]
we deduce the following
\begin{thm}\label{Theorem 4.1}
The manifold $(L,\varphi ,\xi ,\eta ,g)$ belongs to the class $\mathbb{G}_i (i\in \{5,6,10,12\})$ if and only if the corresponding Lie algebra ${\g}$ is determined
by the following commutators:
\begin{equation}\label {4.5}
\begin{array}{ll}
{\bf\mathbb{G}_5} : [E_1,E_2]=C_{12}^1E_1+C_{12}^2E_2+C_{12}^3E_3, \quad [E_1,E_3]=C_{13}^2E_2, \\ \\

[E_2,E_3]=C_{13}^2E_1 : \quad \theta _{F_5}^3=C_{12}^3\neq 0, \quad C_{12}^1C_{13}^2=0, \quad C_{12}^2C_{13}^2=0;
\end{array}
\end{equation}
\begin{equation}\label {4.6}
\begin{array}{ll}
{\bf\mathbb{G}_6} : [E_1,E_2]=C_{12}^1E_1+C_{12}^2E_2, \quad [E_1,E_3]=C_{13}^1E_1+C_{13}^2E_2, \\ \\

[E_2,E_3]=C_{13}^2E_1+C_{13}^1E_2 : \theta _{F_6}^{* 3}=-2C_{13}^1\neq 0, \\ \\
C_{12}^2C_{13}^2-C_{13}^1C_{12}^1=0, \quad C_{12}^1C_{13}^2-C_{13}^1C_{12}^2=0;
\end{array}
\end{equation}
\begin{equation}\label {4.7}
\begin{array}{ll}
{\bf\mathbb{G}_{10}} : [E_1,E_2]=C_{12}^1E_1+C_{12}^2E_2, \quad [E_1,E_3]=C_{13}^1E_1+C_{13}^2E_2, \\ \\

[E_2,E_3]=C_{23}^1E_1-C_{13}^1E_2 : C_{13}^2\neq C_{23}^1 \quad {\text {\rm or}} \quad C_{13}^1\neq 0, \\ \\
C_{12}^1C_{13}^1+C_{12}^2C_{23}^1=0, \quad C_{12}^1C_{13}^2-C_{12}^2C_{13}^1=0;
\end{array}
\end{equation}
\begin{equation}\label {4.8}
\begin{array}{ll}
{\bf\mathbb{G}_{12}} : [E_1,E_2]=C_{12}^1E_1+C_{12}^2E_2, \quad [E_1,E_3]=C_{13}^2E_2+C_{13}^3E_3, \\ \\

[E_2,E_3]=C_{13}^2E_1+C_{23}^3E_3 : C_{13}^3\neq 0 \quad {\text {\rm or}} \quad C_{23}^3\neq 0, \\ \\

(C_{12}^1-C_{23}^3)C_{13}^2=0, \quad (C_{12}^2+C_{13}^3)C_{13}^2=0, \\ \\

(C_{12}^1-C_{23}^3)C_{13}^3+(C_{12}^2+C_{13}^3)C_{23}^3=0.
\end{array}
\end{equation}
\end{thm}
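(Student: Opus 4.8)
The strategy is to work entirely with the structure constants $C_{ij}^k$ and to extract the answer from the explicit data \eqref{3.3}, \eqref{4.3}, \eqref{4.4} together with the Jacobi identity.

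By \propref{Proposition 3.2} and \eqref{3.3}, for every $3$-dimensional almost paracontact metric manifold one has $F=F^5+F^6+F^{10}+F^{12}$, and comparing components shows that these four projections are carried by \emph{disjoint} combinations of the numbers $F_{ijk}$: indeed $F^5$ is determined by $F_{113}-F_{223}=\theta_F^3$, $F^6$ by $F_{123}-F_{213}=\theta_F^{*3}$, $F^{10}$ by the pair $(F_{113}+F_{223},\,F_{123}+F_{213})$, and $F^{12}$ by $(F_{331},F_{332})=(\omega_F^1,\omega_F^2)$. Consequently $(L,\varphi,\xi,\eta,g)$ lies in the single class $\mathbb{G}_i$, $i\in\{5,6,10,12\}$, if and only if the three combinations attached to the other three classes vanish while the one attached to $\mathbb{G}_i$ does not.

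The next step is to substitute the formulas \eqref{4.3} for the $F_{ijk}$ into these vanishing conditions; this is elementary linear algebra in the $C_{ij}^k$ and it yields, for each $i$, the linear relations among the structure constants — equivalently the prescribed form of the three commutators in \eqref{4.5}--\eqref{4.8} — while the non-degeneracy clause is read off from \eqref{4.4}. For instance, in the case $\mathbb{G}_5$ the requirements $F_{113}+F_{223}=F_{123}=F_{213}=F_{331}=F_{332}=0$ become, via \eqref{4.3}, $C_{13}^1=C_{13}^3=C_{23}^2=C_{23}^3=0$ and $C_{13}^2=C_{23}^1$, and $F_{113}-F_{223}=C_{12}^3=\theta_F^3\ne0$; this is precisely the commutator pattern and the non-degeneracy condition in \eqref{4.5}. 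The classes $\mathbb{G}_6$, $\mathbb{G}_{10}$, $\mathbb{G}_{12}$ are handled identically, the ``or'' in \eqref{4.7} and \eqref{4.8} being just the statement that $F^{10}\ne0$, resp.\ $F^{12}\ne0$.

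The last ingredient is the Jacobi identity $\mathop{\s}\limits_{E_i,E_j,E_k}\bigl[[E_i,E_j],E_k\bigr]=0$, which after writing $[E_i,E_j]=C_{ij}^kE_k$ becomes the three scalar equations
\[
C_{12}^1C_{13}^m+C_{12}^2C_{23}^m-C_{23}^2C_{12}^m-C_{23}^3C_{13}^m-C_{13}^1C_{12}^m+C_{13}^3C_{23}^m=0,\qquad m=1,2,3.
\]
Inserting the commutator shape found above for each class collapses these (with the $m=3$ equation usually trivial) into exactly the quadratic relations displayed in \eqref{4.5}--\eqref{4.8}; conversely, when a Lie algebra is prescribed by one of \eqref{4.5}--\eqref{4.8}, those quadratic relations are what guarantees the Jacobi identity, while \eqref{4.3} and \eqref{3.3} give $F=F^i$ and the non-degeneracy clause gives $F^i\ne0$, so $L$ belongs to $\mathbb{G}_i$. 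The computations are purely mechanical; the only point requiring care is the bookkeeping — matching each class to the correct set of vanishing components of $F$ and then tracking the cancellations in the three Jacobi equations, e.g.\ recognizing that for $\mathbb{G}_{12}$ the surviving equation $C_{12}^1C_{13}^3+C_{12}^2C_{23}^3=0$ coincides with the symmetrically written relation $(C_{12}^1-C_{23}^3)C_{13}^3+(C_{12}^2+C_{13}^3)C_{23}^3=0$ of \eqref{4.8}.
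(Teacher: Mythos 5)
Your proposal is correct and follows exactly the route the paper indicates (the paper itself only sketches this argument by citing \eqref{3.3}, \eqref{4.3}, \eqref{4.4} and the Jacobi identity): you identify membership in $\mathbb{G}_i$ with the vanishing of the disjoint component combinations carrying the other three projections, translate via \eqref{4.3} into the commutator patterns, and let the Jacobi identity supply the quadratic relations. Your verification that the $m=3$ Jacobi equation for $\mathbb{G}_{12}$ reduces to the paper's symmetrically written relation is accurate, so nothing is missing.
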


\section{Matrix Lie groups as 3-dimensional almost paracontact metric manifolds}\label{sec-6}
Let $(L,\varphi ,\xi ,\eta ,g)$ be a 3-dimensional almost paracontact metric manifold from \secref{sec-5} belonging to some of the classes $\mathbb{G}_i (i\in \{5,6,10,12\})$.
By $G$  we denote the simply connected Lie group isomorphic to $L$, both with one and the same Lie algebra ${\g}$. Further, we find the
adjoint representation $\rm Ad$ of $G$, which is the following Lie group homomorphism
\[
\rm {Ad} : G \longrightarrow Aut({\g}).
\]
For $X\in {\g}$, the map ${\rm {ad}}_X : {\g}\longrightarrow {\g}$ is defined by ${\rm {ad}}_X(Y)=[X,Y]$, where by ${\rm ad}_X$ is denoted as ${\rm {ad}}(X)$. Due to the Jacobi identity, the map
\[
\rm {ad} : {\g} \longrightarrow End({\g}) : X\longrightarrow ad_X
\]
is Lie algebra homomorphism, which is called  adjoint representation of ${\g}$.
Since the set ${\rm End}({\g})$ of all ${\K}$-linear maps from ${\g}$ to ${\g}$ is isomorphic to the set of all  $(n\times n)$ matrices ${\rm M}(n,{\K})$ with entries in ${\K}$, $\rm {ad}$ is a matrix representation of ${\g}$. We denote by $M_i$ the matrices of ${\rm ad}_{E_i}$ (i=1,2,3) with respect to the basis $\{E_1,E_2,E_3\}$ of ${\g}$.  Then for an arbitrary $X=aE_1+bE_2+cE_3$ ($a, b, c \in {\R}$) in ${\g}$ the matrix $A$ of ${\rm ad}_X$ is $A=aM_1+bM_2+cM_3$.
By using the well known identity $e^A={\rm {Ad}}\left(e^X\right)$ we find the matrix representation of the Lie group $G$.
\subsection{Matrix Lie groups as manifolds from the class $\mathbb{G}_5$}\label{subsec-6.1}
Let ${\g}_5$ be the Lie algebra obtained from \eqref{4.5} by $C_{12}^1=C_{12}^2=C_{13}^2=0$, i.e.
\begin{equation}\label{5.1.1}
[E_1,E_2]=\alpha E_3, \quad [E_1,E_3]=0, \quad [E_2,E_3]=0,
\end{equation}
where $\alpha =\theta _{F_5}^3=C_{12}^3$.
Then from \thmref{Theorem 4.1} it follows that $(L,\varphi ,\xi ,\eta ,g)$, where $L$ is a Lie group with a Lie algebra  ${\g}_5$, belongs to the class $\mathbb{G}_5$.
In partucular, the manifold  is paracontact metric if and only if $\alpha =2$.
The Levi-Civita connection ${\nabla}$ is given by
\[
\begin{array}{llll}
\nabla_{E_1}E_1=0 , \quad \nabla_{E_1}E_2=\frac{\alpha}{2}E_3 , \quad \nabla_{E_1}E_3=\frac{\alpha}{2}E_2 , \\
\nabla_{E_2}E_1=-\frac{\alpha}{2}E_3 , \quad \nabla_{E_2}E_2=0 , \quad \nabla_{E_2}E_3=\frac{\alpha}{2}E_1  , \\
\nabla_{E_3}E_1=\frac{\alpha}{2}E_2 , \quad \nabla_{E_3}E_2=\frac{\alpha}{2}E_1 , \quad \nabla_{E_3}E_3=0.   \\
\end{array}
\]
It is not hard to see that the Ricci tensor $Ric$ is equal to
$$Ric(x,y)=scalg(x,y)-2scal\eta(x)\eta(y),$$
where $scal=\frac{\alpha^2}{2}$ is the scalar curvature. Consequently, $L$ is an $\eta$-Einstein manifold.
For the matrices $M_i$ \, (i=1,2,3) and $A$  we have:
\[
M_1=\left(\begin{array}{lll}
0 & 0 & 0 \cr
0 & 0  & 0 \cr
0 & \alpha & 0
\end{array}\right) , \quad
M_2=\left(\begin{array}{cll}
0 & 0 & 0 \cr
0 & 0  & 0 \cr
-\alpha  & 0 & 0
\end{array}\right) , \quad
M_3=\left(\begin{array}{lll}
0 & 0 & 0 \cr
0 & 0 & 0 \cr
0 & 0 & 0
\end{array}\right) ,
\]
\begin{equation}\label{5.1.2}
A=\left(\begin{array}{ccl}
0 & 0 & 0 \cr
0 & 0 & 0 \cr
-b\alpha & a\alpha & 0
\end{array}\right) .
\end{equation}
\begin{thm}\label{Theorem 5.1.1}
The matrix representation of the Lie group $G_5$ corresponding to the Lie algebra ${\g}_5$, determined by \eqref{5.1.1} and having the matrix representation
\eqref{5.1.2}, is
\begin{equation}\label{5.1.3}
G_5=\left\{e^A=
\left(\begin{array}{ccl}
1 & 0 & 0 \cr
0 & 1 & 0 \cr
-b\alpha  & a\alpha & 1
\end{array}\right)
\right\} .
\end{equation}
\end{thm}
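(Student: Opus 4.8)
The plan is to compute the matrix exponential $e^A$ directly, exploiting the very simple form of $A$ in \eqref{5.1.2}. First I would observe that $A$ is strictly lower triangular with nonzero entries only in the bottom row, specifically $A = \alpha(a\,e_{32} - b\,e_{31})$ in terms of the elementary matrices $e_{ij}$. The crucial point is nilpotency: since the only nonzero row of $A$ is the third, and the third column of $A$ is zero, we get $A^2 = 0$. This is the one computation that makes everything collapse, and I expect it to be the heart of the argument — though here ``heart'' is an overstatement, as it is a one-line check: $(A^2)_{ik} = \sum_j A_{ij}A_{jk}$, and $A_{ij}\neq 0$ forces $i=3$, while $A_{jk}\neq 0$ forces $j=3$, but $A_{3j}\neq 0$ forces $j\in\{1,2\}$, a contradiction.

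Given $A^2=0$, the exponential series truncates: $e^A = I + A$, which is precisely the matrix displayed in \eqref{5.1.3}. Then I would invoke the identity $e^A = \mathrm{Ad}(e^X)$ recalled in the preamble of \secref{sec-6}, together with the fact that $G_5$ is the simply connected Lie group with Lie algebra ${\g}_5$ and that the adjoint representation of a simply connected (solvable, hence here trivially) Lie group with the given algebra is faithful, so that the image $\{e^A : X\in{\g}_5\}$ is a faithful matrix realization of $G_5$. Writing $X = aE_1+bE_2+cE_3$ and using $A = aM_1+bM_2+cM_3$ as in \eqref{5.1.2}, the parameter $c$ drops out entirely (since $M_3=0$), and the group is parametrized by $(a,b)\in\R^2$ through the displayed matrices; one checks directly that $(I+A(a,b))(I+A(a',b')) = I + A(a+a',b+b')$, confirming this is a group isomorphic to $(\R^2,+)$, consistent with ${\g}_5$ being the Heisenberg-type algebra \eqref{5.1.1}.

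The only genuine subtlety — and the step I would flag as the main obstacle, modest as it is — is the claim that $\mathrm{ad}:{\g}_5\to\End({\g}_5)$ is injective, i.e. that this \emph{matrix} representation actually distinguishes elements of $G_5$. From \eqref{5.1.2} the kernel of $\mathrm{ad}$ on ${\g}_5$ is $\{cE_3 : c\in\R\} = $ the center of ${\g}_5$, so $\mathrm{ad}$ is \emph{not} faithful. Thus strictly speaking $G_5$ does not embed in $\GL(3,\R)$ via $\mathrm{Ad}$; what the theorem asserts is the matrix representation \emph{afforded by} $\mathrm{Ad}$, namely the set \eqref{5.1.3}, and I would state it in exactly that spirit: the image $\mathrm{Ad}(G_5)$ is the abelian group \eqref{5.1.3}, obtained by exponentiating $\mathrm{ad}({\g}_5)$, and since $A^2=0$ this exponential is computed exactly by $e^A=I+A$. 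If one wants an honest embedding one passes to $G_5/Z(G_5)$, but that is not what is being claimed here, so I would not belabor it.
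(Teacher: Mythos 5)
Your proof is correct and follows essentially the same route as the paper: the key step in both is that $A^2=0$, so the exponential series truncates to $e^A=E+A$, which is exactly the matrix in \eqref{5.1.3}. Your additional observation that $\mathrm{ad}$ has the center $\left\{cE_3\right\}$ in its kernel (so \eqref{5.1.3} is really $\mathrm{Ad}(G_5)$ rather than a faithful copy of $G_5$) is a fair caveat not addressed in the paper, but it does not change the computation.
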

\begin{proof}
The matrix $A$ is nilpotent of degree $q=2$. Therefore we compute $e^A$ directly from
\begin{equation}\label {5.1}
\begin{array}{l}
e^A=E+A+\frac{A^2}{2!}+\frac{A^3}{3!}+\ldots +\frac{A^{q-1}}{(q-1)!} \, ,
\end{array}
\end{equation}
i.e. $e^A=E+A$. So we obtain \eqref{5.1.3}.
\end{proof}

\subsection{Matrix Lie groups as manifolds from the class $\mathbb{G}_6$}\label{subsec-6.2}
We consider the  Lie algebra ${\g}_6$ obtained from \eqref{4.6} by
$C_{12}^1=C_{12}^2=0$, i.e.
\begin{equation}\label{5.2.1}
[E_1,E_2]=0, \quad [E_1,E_3]=\alpha E_1+\beta E_2, \quad [E_2,E_3]=\beta E_1+\alpha E_2,
\end{equation}
where $\alpha =-\frac{\theta _{F_6}^{*3}}{2}=C_{13}^1, \, \, \beta =C_{13}^2\neq 0$.
Then from \thmref{Theorem 4.1} it follows that $(L,\varphi ,\xi ,\eta ,g)$, where $L$ is a Lie group with a Lie algebra  ${\g}_6$, belongs to the class $\mathbb{G}_6$.
The Levi-Civita connection ${\nabla}$ is given by
\[
\begin{array}{llll}
\nabla_{E_1}E_1=-\alpha E_3 , \quad \nabla_{E_1}E_2=0 , \quad \nabla_{E_1}E_3=\alpha E_1 , \\
\nabla_{E_2}E_1=0 , \quad \nabla_{E_2}E_2=\alpha E_3 , \quad \nabla_{E_2}E_3=\alpha E_2  , \\
\nabla_{E_3}E_1=-\beta E_2 , \quad \nabla_{E_3}E_2=-\beta E_1 , \quad \nabla_{E_3}E_3=0.   \\
\end{array}
\]
It not hard to see that the Ricci tensor $Ric$ is equal to
$$Ric(x,y)=\frac{scal}{3}g(x,y),$$
where $scal=-6\alpha^2$ is the scalar curvature. Consequently, $L$ is an Einstein manifold.

For the matrices $M_i$ \, (i=1,2,3) and $A$ we get:
\[
M_1=\left(\begin{array}{llc}
0 & 0 & \alpha \cr
0 & 0  & \beta \cr
0 & 0 & 0
\end{array}\right) , \quad
M_2=\left(\begin{array}{llr}
0 & 0 & \beta \cr
0 & 0 & \alpha \cr
0 & 0 & 0
\end{array}\right) , \quad
M_3=\left(\begin{array}{rrl}
-\alpha & -\beta & 0 \cr
-\beta & -\alpha & 0 \cr
0 & 0 & 0
\end{array}\right) ,
\]
\begin{equation}\label{5.2.2}
A=\left(\begin{array}{rrc}
-c\alpha & -c\beta & a\alpha +b\beta \cr
-c\beta & -c\alpha & b\alpha +a\beta\cr
0 & 0 & 0
\end{array}\right) .
\end{equation}
\begin{thm}\label{Theorem 5.2.1}
The matrix representation of the Lie group $G_6$ corresponding to the Lie algebra ${\g}_6$, determined by \eqref{5.2.1} and having the matrix representation
\eqref{5.2.2}, is as follows:
\begin{itemize}
\item If $c\neq 0$, $\beta =\alpha $ and $b=-a$, then
\begin{equation}\label{5.2.3}
G_6=\left\{e^A=
\left(\begin{array}{ccc}
\frac{1+e^{-2c\alpha }}{2} & \frac{-1+e^{-2c\alpha }}{2} & 0 \cr \cr
\frac{-1+e^{-2c\alpha }}{2} & \frac{1+e^{-2c\alpha }}{2} & 0 \cr \cr
\frac{-1+e^{-2c\alpha }}{2} & \frac{-1+e^{-2c\alpha }}{2} & 1
\end{array}\right)
\right\} .
\end{equation}
\item If $c\neq 0$, $\beta =\alpha $ and $b\neq-a$, then
\begin{equation}\label{5.2.4}
G_6=\left\{e^A=
\left(\begin{array}{ccc}
\frac{1+e^{-2c\alpha }}{2} & \frac{-1+e^{-2c\alpha }}{2} & \frac{(a+b)(1-e^{-2c\alpha })}{2c} \cr \cr
\frac{-1+e^{-2c\alpha }}{2} & \frac{1+e^{-2c\alpha }}{2} &  \frac{(a+b)(1-e^{-2c\alpha })}{2c}\cr \cr
0 & 0 & 1
\end{array}\right)
\right\} .
\end{equation}
\item If $c\neq 0$, $\beta =-\alpha $ and $b=a$, then
\begin{equation}\label{5.2.5}
G_6=\left\{e^A=
\left(\begin{array}{ccc}
\frac{1+e^{-2c\alpha }}{2} & \frac{1-e^{-2c\alpha }}{2} & 0 \cr \cr
\frac{1-e^{-2c\alpha }}{2} & \frac{1+e^{-2c\alpha }}{2} & 0 \cr \cr
\frac{-1+e^{-2c\alpha }}{2} & \frac{1-e^{-2c\alpha }}{2} & 1
\end{array}\right)
\right\} .
\end{equation}
\item If $c\neq 0$, $\beta =-\alpha $ and $b\neq a$, then
\begin{equation}\label{5.2.6}
G_6=\left\{e^A=
\left(\begin{array}{ccc}
\frac{1+e^{-2c\alpha }}{2} & \frac{1-e^{-2c\alpha }}{2} & \frac{(a-b)(1-e^{-2c\alpha })}{2c} \cr \cr
\frac{1-e^{-2c\alpha }}{2} & \frac{1+e^{-2c\alpha }}{2} &  \frac{(a-b)(-1+e^{-2c\alpha })}{2c}\cr \cr
0 & 0 & 1
\end{array}\right)
\right\} .
\end{equation}
\item If $c\neq 0$, $\beta \neq \pm \alpha $, then
\begin{equation}\label{5.2.7}
\small{G_6=
\left(\begin{array}{ccc}
e^{-c\alpha }\cosh c\beta & -e^{-c\alpha }\sinh c\beta  & \frac{a(1-e^{-c\alpha }\cosh c\beta)+be^{-c\alpha }\sinh c\beta }{c} \cr \cr
-e^{-c\alpha }\sinh c\beta  & e^{-c\alpha }\cosh c\beta & \frac{b(1-e^{-c\alpha }\cosh c\beta)+ae^{-c\alpha }\sinh c\beta }{c} \cr \cr
0 & 0 & 1
\end{array}\right) .}
\end{equation}
\item If $c=0$, then
\begin{equation}\label{5.2.8}
G_6=\left\{e^A=
\left(\begin{array}{llc}
1 & 0 & a\alpha +b\beta \cr
0 & 1  & b\alpha +a\beta \cr
0 & 0 & 1
\end{array}\right)
\right\} .
\end{equation}
\end{itemize}
\end{thm}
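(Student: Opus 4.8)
The statement to prove is Theorem~\ref{Theorem 5.2.1}, the explicit matrix representation of the Lie group $G_6$ corresponding to $\g_6$. Since $\mathrm{Ad}(e^X)=e^{\mathrm{ad}_X}$ and $\mathrm{ad}_X$ has matrix $A$ from \eqref{5.2.2}, the whole task reduces to computing the matrix exponential $e^A$ for the given $A$, and then case-splitting according to the value of $c$ and the relation between $\alpha$ and $\beta$. The plan is to treat the degenerate case $c=0$ first, then the generic case $c\neq0$, and within the latter to isolate the sub-cases $\beta=\alpha$, $\beta=-\alpha$, and $\beta\neq\pm\alpha$, which arise from the eigenvalue structure of $A$.

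\textbf{Step 1: the case $c=0$.} When $c=0$ the matrix $A$ collapses to having only the two entries $a\alpha+b\beta$ and $b\alpha+a\beta$ in the third column, with a zero $2\times2$ block in the top-left and a zero bottom row. Hence $A^2=0$, $A$ is nilpotent of degree $2$, and by \eqref{5.1} (the finite Taylor expansion for nilpotent matrices) we get $e^A=E+A$, which is exactly \eqref{5.2.8}. This is the easy case.

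\textbf{Step 2: the case $c\neq0$.} Here the top-left $2\times2$ block of $A$ is $-c\begin{pmatrix}\alpha&\beta\\\beta&\alpha\end{pmatrix}$, whose eigenvalues are $-c(\alpha+\beta)$ and $-c(\alpha-\beta)$, with eigenvectors $(1,1)^t$ and $(1,-1)^t$. The exponential of this block is a linear combination of $\cosh(c\beta)$ and $\sinh(c\beta)$ times $e^{-c\alpha}$ — giving the generic entries of \eqref{5.2.7}. The third column of $e^A$ is obtained by integrating: since the bottom row of $A$ is zero, one can write $e^A$ in block form and use that the $(1{:}2,3)$ block of $e^A$ equals $\bigl(\int_0^1 e^{sB}\,ds\bigr)v$ where $B$ is the top-left block and $v=(a\alpha+b\beta,\ b\alpha+a\beta)^t$ is the top part of the third column of $A$; equivalently, since $B$ is invertible when $\beta\neq\pm\alpha$, this is $B^{-1}(e^B-E)v$. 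Carrying this out yields the third-column entries $\frac{a(1-e^{-c\alpha}\cosh c\beta)+be^{-c\alpha}\sinh c\beta}{c}$ and its partner, as in \eqref{5.2.7}. The sub-cases $\beta=\pm\alpha$ are where $B$ is singular (one eigenvalue vanishes), so the formula $B^{-1}(e^B-E)v$ must be replaced by the appropriate limit; substituting $\beta=\alpha$ (resp. $\beta=-\alpha$) into the block exponential gives $\cosh c\beta=\cosh c\alpha$, $\sinh c\beta=\pm\sinh c\alpha$, and simplifying $e^{-c\alpha}\cosh c\alpha\pm e^{-c\alpha}\sinh c\alpha$ produces the entries $\tfrac{1\pm e^{-2c\alpha}}{2}$ appearing in \eqref{5.2.3}--\eqref{5.2.6}. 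The further split $b=-a$ versus $b\neq-a$ (resp. $b=a$ versus $b\neq a$) comes precisely from whether $v$ lies in the kernel direction of the singular block: if $a\alpha+b\beta=0$ and $b\alpha+a\beta=0$ the naive formula would be $0/0$, and one uses instead $e^A=E+A+\tfrac12 A^2+\dots$ adapted to the Jordan structure, which is why the third-column entries in \eqref{5.2.3} and \eqref{5.2.5} are no longer of the form $(\text{const})/c$ but involve the bottom row.

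\textbf{Main obstacle.} The only real subtlety is the bookkeeping at the degeneracies: when $\beta=\pm\alpha$ the top-left block $B$ is a rank-one (hence singular, non-diagonalizable unless $\alpha=0$) matrix, so one cannot blindly use $B^{-1}(e^B-E)$, and the vector $v$ may or may not lie in the generalized kernel — this is exactly what separates \eqref{5.2.3} from \eqref{5.2.4} and \eqref{5.2.5} from \eqref{5.2.6}. The cleanest route is probably to diagonalize $B=P\,\mathrm{diag}(-c(\alpha+\beta),-c(\alpha-\beta))\,P^{-1}$ with $P=\tfrac{1}{\sqrt2}\begin{pmatrix}1&1\\1&-1\end{pmatrix}$ in the generic case and to expand $e^A=E+A+\tfrac12A^2$ directly (checking $A^3$ or higher by hand) in each degenerate case, then simplify. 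Everything else is routine matrix arithmetic and the verification that the resulting matrices indeed exponentiate $A$, which can be confirmed by differentiating the claimed $e^{tA}$ at $t=1$ or by the semigroup property.
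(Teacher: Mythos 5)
Your overall strategy is legitimate and genuinely different from the paper's. You exploit the block-triangular form of $A$ in \eqref{5.2.2}: writing $A=\left(\begin{smallmatrix} B & v\\ 0 & 0\end{smallmatrix}\right)$ with $B=-c\left(\begin{smallmatrix}\alpha & \beta\\ \beta & \alpha\end{smallmatrix}\right)$ and $v=(a\alpha+b\beta,\ b\alpha+a\beta)^t$, you obtain $e^A=\left(\begin{smallmatrix} e^B & \left(\int_0^1e^{sB}\,ds\right)v\\ 0 & 1\end{smallmatrix}\right)$, whereas the paper diagonalizes the full $3\times3$ matrix in each subcase, hunting for eigenvectors $p_1,p_2,p_3$ and computing $Pe^JP^{-1}$. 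Your route is cleaner; in fact, since $v=-\tfrac1cB(a,b)^t$, the third column equals $\tfrac1c(I-e^B)(a,b)^t$ for every $c\neq0$, which requires no inversion of $B$ and makes the separation into $\beta=\pm\alpha$ unnecessary: formula \eqref{5.2.7} is uniformly valid for $c\neq 0$ and specializes to \eqref{5.2.4} and \eqref{5.2.6}.

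There is, however, a genuine gap in your treatment of the degenerate subcases. You correctly note that the bottom row of $A$ vanishes; this forces every power $A^k$ ($k\ge1$) to have zero bottom row, hence the bottom row of $e^A$ is $(0,0,1)$ in \emph{all} cases. Yet you assert that your method reproduces \eqref{5.2.3} and \eqref{5.2.5}, whose $(3,1)$ and $(3,2)$ entries are nonzero --- this is incompatible with your own block formula, and indeed with any computation of $e^A$ for this $A$. Carrying out your method honestly for $\beta=\alpha$, $b=-a$ gives $v=0$, so the third column is zero and the bottom row is $(0,0,1)$; the correct matrix is \eqref{5.2.4} specialized to $b=-a$, not \eqref{5.2.3}. (The paper's own proof goes astray at the same point: it takes $p_3=(1,1,1)$ as an eigenvector for $\lambda_3=-2c\alpha$, but $Ap_3=(-2c\alpha,-2c\alpha,0)^t\neq-2c\alpha\,p_3$; the correct eigenvector is $(1,1,0)$.) Your side remarks are also inaccurate: the singular block $B$ is diagonalizable (distinct eigenvalues $0$ and $-2c\alpha$), and for $\beta=\alpha$, $b=-a$ the vector $v$ vanishes identically rather than lying in $\ker B$, so the case split is not governed by "$v$ in the kernel direction". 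You must actually execute the degenerate computations; when you do, you will find that \eqref{5.2.3} and \eqref{5.2.5} cannot be obtained as stated, and your proposal cannot stand as a proof of the theorem in its printed form.
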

\begin{proof}
The characteristic polynomial of A is
\[
P_A(\lambda )=\lambda(-c\alpha +c\beta -\lambda )(c\alpha +c\beta+\lambda ) =0 .
\]
Hence for the eigenvalues $\lambda _i \, (i = 1, 2, 3)$ of $A$ we have
\[
\lambda _1=0 , \quad \lambda _2=c(\beta -\alpha ) , \quad \lambda _3=-c(\alpha +\beta ) .
\]
First, we assume that $c\neq 0$. If $\beta =\alpha $ \, (resp. $\beta =-\alpha $) we obtain
\[
\lambda _1=\lambda _2=0, \, \lambda _3=-2c\alpha  \qquad
(\text {resp.} \, \lambda _1=\lambda _3=0, \, \lambda _2=-2c\alpha ).
\]
Let us consider the case $\beta =\alpha $. Then the eigenvectors
\[
p_1=(-1,1,0), \quad p_2=(a+b,0,c),
\]
corresponding to $\lambda _1=\lambda _2=0$, are linearly independent for arbitrary $a$ and $b$. The coordinates $(x_1,x_2,x_3)$ of the eigenvector $p_3$, corresponding to $\lambda _3=-2c\alpha $,
satisfy the following system:
\[
\left|\begin{array}{ll}
x_1-x_2+\frac{a+b}{c}x_3=0 \cr \cr
-x_1+x_2+\frac{a+b}{c}x_3=0 .
\end{array}\right.
\]
If we suppose that $b=-a$, then $p_1, p_2=(0,0,c)$ and $p_3=(1,1,1)$ are linearly independent and for the change of basis matrix P we get
\[
P=\left(\begin{array}{rll}
-1 & 0 & 1 \cr
1 & 0 & 1 \cr
0 & c & 1
\end{array}\right) .
\]
By using that $e^A = Pe^JP^{-1}$, where $J$ is the diagonal matrix with elements $J_{ii} =\lambda _i$ and $P^{-1}$ is the inverse matrix of $P$,
we obtain the matrix representation \eqref{5.2.3} of $G_6$. When $b\neq -a$ the vectors  $p_1, p_2=(a+b,0,c)$ and $p_3=(1,1,0)$ are linearly independent
and the matrix representation of $G_6$ is in the form \eqref{5.2.4}.\\
In the case $\beta =-\alpha$, by analogical computations, we obtain \eqref{5.2.5} and \eqref{5.2.6}.\\
Now, if we take $\beta \neq \alpha$, then the eigenvalues $\lambda _i \, (i = 1, 2, 3)$ of $A$ are different and hence the corresponding eigenvectors
\[
p_1=\left(\frac{a}{c},\frac{b}{c},1\right), \quad p_2=(1,-1,0) \quad \text{and} \quad p_3=(1,1,0)
\]
are linearly independent. Then the matrix representation of $G_6$ is \eqref{5.2.7}.\\
Finally, the assumption $c=0$ implies that $A$ is nilpotent matrix of degree \\ q = 2  and by using \eqref{5.1} we obtain \eqref{5.2.8}.
\end{proof}

\subsection{Matrix Lie groups as manifolds from the class $\mathbb{G}_{10}$}\label{subsec-6.3}
We consider the  Lie algebra ${\g}_{10}$ obtained from \eqref{4.7} by
$C_{12}^1=C_{12}^2=C_{13}^2=C_{23}^1=0$, i.e.
\begin{equation}\label{5.3.1}
[E_1,E_2]=0, \quad [E_1,E_3]=\alpha E_1, \quad [E_2,E_3]=-\alpha E_2,
\end{equation}
where $\alpha =C_{13}^1\neq 0$.
Then from \thmref{Theorem 4.1} it follows that $(L,\varphi ,\xi ,\eta ,g)$, where $L$ is a Lie group with a Lie algebra  ${\g}_{10}$, belongs to the class $\mathbb{G}_{10}$.
The Levi-Civita connection ${\nabla}$ is given by
\[
\begin{array}{llll}
\nabla_{E_1}E_1=-\alpha E_3 , \quad \nabla_{E_1}E_2=0 , \quad \nabla_{E_1}E_3=\alpha E_1 , \\
\nabla_{E_2}E_1=0 , \quad \nabla_{E_2}E_2=-\alpha E_3 , \quad \nabla_{E_2}E_3=-\alpha E_2  , \\
\nabla_{E_3}E_1=0 , \quad \nabla_{E_3}E_2=0 , \quad \nabla_{E_3}E_3=0.   \\
\end{array}
\]
It is easy to see that the Ricci tensor $Ric$ is equal to
$$Ric(x,y)=scal\eta(x)\eta(y),$$
where $scal=-6\alpha^2$ is the scalar curvature.

For the matrices $M_i$ \, (i=1,2,3) and $A$ we have:
\[
M_1=\left(\begin{array}{llc}
0 & 0 & \alpha \cr
0 & 0  & 0 \cr
0 & 0 & 0
\end{array}\right) , \quad
M_2=\left(\begin{array}{llr}
0 & 0 & 0 \cr
0 & 0 & -\alpha \cr
0 & 0 & 0
\end{array}\right) , \quad
M_3=\left(\begin{array}{rll}
-\alpha & 0 & 0 \cr
0 & \alpha & 0 \cr
0 & 0 & 0
\end{array}\right) ,
\]
\begin{equation}\label{5.3.2}
A=\left(\begin{array}{rcr}
-c\alpha & 0 & a\alpha \cr
0 & c\alpha & -b\alpha \cr
0 & 0 & 0
\end{array}\right) .
\end{equation}
\begin{thm}\label{Theorem 5.3.1}
The matrix representation of the Lie group $G_{10}$ corresponding to the Lie algebra ${\g}_{10}$, determined by \eqref{5.3.1} and having the matrix representation
\eqref{5.3.2}, is as follows:
\begin{itemize}
\item If $c\neq 0$, then
\begin{equation}\label{5.3.3}
G_{10}=\left\{e^A=
\left(\begin{array}{ccc}
e^{-c\alpha } & 0 & \frac{a\left(1-e^{-c\alpha }\right)}{c} \cr \cr
0 & e^{c\alpha } & \frac{b\left(1-e^{c\alpha }\right)}{c} \cr \cr
0 & 0 & 1
\end{array}\right)
\right\} .
\end{equation}
\item If $c=0$, then
\begin{equation}\label{5.3.4}
G_{10}=\left\{e^A=\left(\begin{array}{llc}
1 & 0 & a\alpha \cr
0 & 1 & -b\alpha \cr
0 & 0 & 1
\end{array}\right)\right\} .
\end{equation}
\end{itemize}
\end{thm}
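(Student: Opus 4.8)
The plan is to compute the matrix exponential $e^{A}$ for the matrix $A$ of \eqref{5.3.2}, splitting into two cases according to whether $c$ vanishes, exactly as in the proof of \thmref{Theorem 5.2.1}.

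First I would treat the generic case $c\neq 0$. Since $\alpha\neq 0$ by the defining relations \eqref{5.3.1}, we have $c\alpha\neq 0$; the characteristic polynomial of $A$ is $P_{A}(\lambda)=-\lambda(\lambda-c\alpha)(\lambda+c\alpha)$, so its roots $\lambda_{1}=-c\alpha$, $\lambda_{2}=c\alpha$, $\lambda_{3}=0$ are pairwise distinct and $A$ is diagonalizable over $\R$. Reading off \eqref{5.3.2} one sees at once that $(1,0,0)$ and $(0,1,0)$ are eigenvectors for $\lambda_{1}$ and $\lambda_{2}$ respectively, while solving $Av=0$ (using $\alpha\neq 0$) gives the eigenvector $\left(\frac{a}{c},\frac{b}{c},1\right)$ for $\lambda_{3}$. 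Taking these three vectors as the columns of the change of basis matrix $P$ produces unitriangular matrices $P$ and $P^{-1}$ with explicit entries $\pm a/c$, $\pm b/c$, and $A=PJP^{-1}$ with $J=\diag(-c\alpha,c\alpha,0)$. Then $e^{A}=Pe^{J}P^{-1}$ with $e^{J}=\diag(e^{-c\alpha},e^{c\alpha},1)$, and carrying out this short triple product yields exactly \eqref{5.3.3}.

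Next I would dispose of the degenerate case $c=0$. Then $A$ becomes strictly upper triangular with only its $(1,3)$ and $(2,3)$ entries possibly nonzero, so $A^{2}=0$, i.e. $A$ is nilpotent of degree $q=2$. Applying the truncated exponential series \eqref{5.1} gives $e^{A}=E+A$, which is precisely \eqref{5.3.4}.

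I do not anticipate any genuine difficulty. In contrast to the class $\mathbb{G}_{6}$ of \thmref{Theorem 5.2.1}, here the eigenvalues $-c\alpha,0,c\alpha$ are automatically pairwise distinct as soon as $c\neq 0$, so no subcases from coinciding eigenvalues arise; the only points needing a little care are the bookkeeping in the product $Pe^{J}P^{-1}$ and the separate nilpotent treatment of $c=0$.
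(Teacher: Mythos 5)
Your proposal is correct and follows essentially the same route as the paper: characteristic polynomial with eigenvalues $-c\alpha,\,c\alpha,\,0$, diagonalization via the eigenvectors (the paper uses $(a,b,c)$ where you use its rescaling $(a/c,b/c,1)$) and $e^{A}=Pe^{J}P^{-1}$ when $c\neq 0$, and the truncated series \eqref{5.1} for the nilpotent case $c=0$. Your explicit remark that $\alpha\neq 0$ guarantees distinct eigenvalues is a small point the paper leaves implicit, but otherwise the two arguments coincide.
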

\begin{proof}
From the characteristic polynomial of A
\[
P_A(\lambda )=(-c\alpha -\lambda )(c\alpha-\lambda )\lambda =0
\]
we find
\[
\lambda _1=-c\alpha , \quad \lambda _2=c\alpha , \quad \lambda _3=0 .
\]
If $c\neq 0$, then the eigenvalues $\lambda _i \, (i = 1, 2, 3)$ of $A$ are different and hence the corresponding to $\lambda _i \, (i = 1, 2, 3)$ eigenvectors
\[
p_1=(1,0,0), \quad p_2=(0,1,0), \quad p_3=(a,b,c)
\]
are linearly independent. For the change of basis matrix P we have
\[
P=\left(\begin{array}{lll}
1 & 0 & a \cr
0 & 1 & b \cr
0 & 0 & c
\end{array}\right) .
\]
Then for the matrix representation of $G_{10}$ we obtain \eqref{5.3.3}.\\
In the case when $c=0$ the matrix $A$ is nilpotent of degree $q=2$. Using \eqref{5.1} we establish \eqref{5.3.4}.
\end{proof}

\subsection{Matrix Lie groups as manifolds from the class $\mathbb{G}_{12}$}\label{subsec-6.4}
Let ${\g}_{12}$ be the  Lie algebra  obtained from \eqref{4.8} by $C_{13}^2=0$, i.e.
\begin{equation}\label{5.4.1}
[E_1,E_2]=\alpha E_1+\beta E_2, \quad [E_1,E_3]=-\beta E_3, \quad [E_2,E_3]=\alpha E_3,
\end{equation}
where $\alpha =\omega _F^1=C_{23}^3=C_{12}^1\neq 0$, $\beta =-\omega _F^2=-C_{13}^3=C_{12}^2\neq 0$.
Then from \thmref{Theorem 4.1} it follows that $(L,\varphi ,\xi ,\eta ,g)$, where $L$ is a Lie group with a Lie algebra  ${\g}_{12}$, belongs to the class $\mathbb{G}_{12}$.
The Levi-Civita connection ${\nabla}$ is given by
\[
\begin{array}{llll}
\nabla_{E_1}E_1=\alpha E_2 , \quad \nabla_{E_1}E_2=\alpha E_1 , \quad \nabla_{E_1}E_3=0 , \\
\nabla_{E_2}E_1=-\beta E_2 , \quad \nabla_{E_2}E_2=-\beta E_1 , \quad \nabla_{E_2}E_3=0  , \\
\nabla_{E_3}E_1=\beta E_3 , \quad \nabla_{E_3}E_2=-\alpha E_3 , \quad \nabla_{E_3}E_3=-\beta E_1-\alpha E_2.   \\
\end{array}
\]

Hence the matrices $M_i$ \, (i=1,2,3) and $A$ are:
\[
M_1=\left(\begin{array}{llr}
0 & \alpha & 0 \cr
0 & \beta   & 0\cr
0 & 0 & -\beta
\end{array}\right) , \quad
M_2=\left(\begin{array}{rlr}
-\alpha & 0 & 0 \cr
-\beta & 0 & 0 \cr
0 & 0 & \alpha
\end{array}\right) , \quad
M_3=\left(\begin{array}{rrl}
0 & 0 & 0 \cr
0 & 0 & 0 \cr
\beta & -\alpha & 0
\end{array}\right) ,
\]
\begin{equation}\label{5.4.2}
A=\left(\begin{array}{rrc}
-b\alpha & a\alpha & 0 \cr
-b\beta & a\beta & 0\cr
c\beta & -c\alpha & b\alpha-a\beta
\end{array}\right) .
\end{equation}
\begin{thm}\label{Theorem 5.4.1}
The matrix representation of the Lie group $G_{12}$ corresponding to the Lie algebra ${\g}_{12}$, determined by \eqref{5.4.1} and having the matrix representation
\eqref{5.4.2}, is as follows:
\begin{itemize}
\item If $b\alpha-a\beta\neq 0$, then
\begin{equation}\label{5.4.3}
G_{12}=\left\{e^A=
\left(\begin{array}{llc}
\frac{a\beta -b\alpha e^{a\beta -b\alpha }}{a\beta -b\alpha} & \frac{a\alpha (e^{a\beta -b\alpha}-1)}{a\beta -b\alpha} & 0 \cr \cr
\frac{b\beta (1-e^{a\beta -b\alpha})}{a\beta -b\alpha} & \frac{-b\alpha +a\beta e^{a\beta -b\alpha }}{a\beta -b\alpha}  & 0 \cr \cr
\frac{c\beta (1-e^{b\alpha -a\beta })}{a\beta -b\alpha} & \frac{c\alpha (e^{b\alpha -a\beta }-1)}{a\beta -b\alpha} & e^{b\alpha -a\beta }
\end{array}\right)
\right\} ,
\end{equation}
where $(a,b)\neq (0,0)$.
\item If $b\alpha-a\beta =0$, then
\begin{equation}\label{5.4.4}
G_{12}=\left\{e^A=
\left(\begin{array}{ccc}
1-b\alpha  & a\alpha  & 0 \cr
-b\beta  & 1+a\beta   & 0 \cr
c\beta  & -c\alpha & 1
\end{array}\right)
\right\} ,
\end{equation}
where both $a$ and $b$ are zero or non-zero.
\end{itemize}
\end{thm}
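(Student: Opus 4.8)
The plan is to follow the pattern of the proofs of Theorems~\ref{Theorem 5.1.1}, \ref{Theorem 5.2.1} and \ref{Theorem 5.3.1}: I would compute the characteristic polynomial of the matrix $A$ given by \eqref{5.4.2}, read off its eigenvalues, and then obtain $e^{A}$ either by diagonalization (when $A$ has three distinct eigenvalues) or from the nilpotency formula \eqref{5.1} (when $A$ turns out to be nilpotent). The two cases in the statement correspond precisely to these two situations, distinguished by whether $b\alpha-a\beta$ vanishes.

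First I would use the block structure of $A$. Its third column equals $(0,0,\,b\alpha-a\beta)^{t}$, so the vector $(0,0,1)$ is an eigenvector with eigenvalue $b\alpha-a\beta$; the upper-left $2\times2$ block of $A$ has trace $a\beta-b\alpha$ and determinant $(-b\alpha)(a\beta)-(a\alpha)(-b\beta)=0$, hence eigenvalues $0$ and $a\beta-b\alpha$. Consequently
\[
P_{A}(\lambda)=-\lambda\bigl(\lambda-(a\beta-b\alpha)\bigr)\bigl(\lambda-(b\alpha-a\beta)\bigr),
\]
so the spectrum of $A$ is $\{\,0,\ a\beta-b\alpha,\ b\alpha-a\beta\,\}$.

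Next, assume $b\alpha-a\beta\neq0$. Then the three eigenvalues are pairwise distinct, so $A$ is diagonalizable. I would check directly that $p_{1}=(a,b,c)$, $p_{2}=(\alpha,\beta,0)$ and $p_{3}=(0,0,1)$ are eigenvectors of $A$ associated with $0$, $a\beta-b\alpha$ and $b\alpha-a\beta$ respectively, and that the change-of-basis matrix $P$ with these columns satisfies $\det P=a\beta-b\alpha\neq0$. Then $e^{A}=Pe^{J}P^{-1}$, where $J$ is the diagonal matrix with diagonal $0,\ a\beta-b\alpha,\ b\alpha-a\beta$, so that $e^{J}=\diag\bigl(1,\,e^{a\beta-b\alpha},\,e^{b\alpha-a\beta}\bigr)$; computing $P^{-1}=\tfrac{1}{a\beta-b\alpha}\operatorname{adj}P$ and multiplying out the three matrices, then collecting terms, yields exactly \eqref{5.4.3}. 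Note that $b\alpha-a\beta\neq0$ already forces $(a,b)\neq(0,0)$, as recorded in the statement.

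Finally, assume $b\alpha-a\beta=0$. Then the $(3,3)$-entry of $A$ vanishes, and a short computation (using $b\alpha=a\beta$ together with $\alpha\neq0$ and $\beta\neq0$) shows that $A^{2}=0$, so $A$ is nilpotent of degree at most $2$. By \eqref{5.1} we then get $e^{A}=E+A$, which is precisely \eqref{5.4.4}; moreover $a=0$ if and only if $b=0$ here, since $\alpha\beta\neq0$, which is the dichotomy stated. The only genuine obstacle is computational: carrying out the product $Pe^{J}P^{-1}$ in the first case and simplifying all nine entries into the form displayed in \eqref{5.4.3}; everything else is routine linear algebra of the kind already performed in the previous subsections.
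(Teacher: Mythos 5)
Your proposal is correct and follows essentially the same route as the paper: compute the characteristic polynomial of $A$, identify the eigenvalues $0$, $a\beta-b\alpha$, $b\alpha-a\beta$, diagonalize via the eigenvectors $p_1=(a,b,c)$, $p_2=(\alpha,\beta,0)$, $p_3=(0,0,1)$ when $b\alpha-a\beta\neq0$, and use nilpotency of degree $2$ with formula \eqref{5.1} when $b\alpha-a\beta=0$. The extra observations you add (reading the spectrum off the block structure and verifying $A^2=0$ explicitly) are correct refinements of the same argument.
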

\begin{proof}
From the characteristic polynomial of A
\[
P_A(\lambda )=\lambda (\lambda +b\alpha -a\beta )(b\alpha -a\beta -\lambda ) =0
\]
we find
\[
\lambda _1=0 , \quad \lambda _2=-b\alpha +a\beta , \quad \lambda _3=b\alpha -a\beta .
\]
First, we assume that $b\alpha-a\beta\neq 0$. From this condition it follows that $(a,b)\neq (0,0)$ and  the eigenvalues $\lambda _i \, (i = 1, 2, 3)$ of $A$ are different. Then the corresponding to $\lambda _i \, (i = 1, 2, 3)$ eigenvectors
\[
p_1=(a,b,c), \quad p_2=(\alpha ,\beta ,0), \quad p_3=(0,0,1)
\]
are linearly independent and the change of basis matrix P is
\[
P=\left(\begin{array}{lll}
a & \alpha & 0 \cr
b & \beta & 0 \cr
c & 0 & 1
\end{array}\right) .
\]
By straightforward computations we obtain that in this case \eqref{5.4.3} is the matrix representation of $G_{12}$.\\
If $b\alpha-a\beta =0$, then  both $a$ and $b$ are zero or non-zero. In this case the matrix $A$ is nilpotent of degree $q=2$ and  the matrix representation of
$G_{12}$ is in the form \eqref{5.4.4}.
\end{proof}

\section*{Acknowledgments}

S.Z. is partially supported by Contract DFNI I02/4/12.12.2014 and Contract 80-10-33/2017 with the Sofia University ''St.Kl.Ohridski''.\\

G.N. is partially supported by Contract FSD-31-653-08/19.06.2017 with the University of Veliko Tarnovo "St. Cyril and St. Methodius".

\end{document}